\numberwithin{equation}{section}
\newtheorem{theorem}{Theorem}[section]
\newtheorem{lemma}[theorem]{Lemma}
\newtheorem{proposition}[theorem]{Proposition}
\newtheorem{conjecture}[theorem]{Conjecture}
\theoremstyle{definition}
\newtheorem{definition}[theorem]{Definition}
\newtheorem*{preliminary definition}{Preliminary definition}
\newtheorem*{non degeneracy hypothesis}{Non-degeneracy hypothesis}
\newtheorem*{mixed hessian condition}{Mixed Hessian condition}
\newtheorem*{curvature condition}{Curvature condition}
\newtheorem*{projection condition}{Projection condition}
\newtheorem*{cone condition}{Cone condition}
\newtheorem*{rank condition}{Rank condition}
\newtheorem{example}[theorem]{Example}
\newtheorem{remark}[theorem]{Remark}
\newtheorem*{induction hypothesis}{Radial induction hypothesis}
\newcommand{\R}{\mathbb{R}}
\newcommand{\Z}{\mathbb{Z}}
\newcommand{\N}{\mathbb{N}}
\newcommand{\ud}{\mathrm{d}}
\newcommand\supp{\operatorname{supp}}
\newcommand\rank{\operatorname{rank}}
\newcommand{\e}{\varepsilon}
\newcommand{\Rn}{{\mathbb R}^n}
\newcommand{\la}{\lambda}
\begin{document}

\title[Local smoothing for Fourier integral operators]{Sharp local smoothing estimates for Fourier integral operators}

\author[D. Beltran, J. Hickman and C. D. Sogge]{David Beltran, Jonathan Hickman and Christopher D. Sogge}
\address{David Beltran: BCAM - Basque Center for Applied Mathematics, Alameda de Mazarredo 14, 48009 Bilbao, Spain.}
\email{dbeltran@bcamath.org}

\address{Jonathan Hickman: Mathematical Institute, University of St Andrews, North Haugh, St Andrews, Fife, KY16 9SS, UK.}
\email{jeh25@st-andrews.ac.uk}

\address{Christopher D. Sogge: Department of Mathematics, Johns Hopkins University, Baltimore, MD 21218, USA.}
\email{sogge@jhu.edu}

\subjclass[2010]{Primary: 35S30, Secondary: 35L05}
\keywords{Local smoothing, variable coefficient, Fourier integral operators, decoupling inequalities.}


\thanks{ D.B. was supported by: the ERCEA Advanced Grant 2014 669689 - HADE, the MINECO project MTM2014-53850-P, the Basque Government project IT-641-13, the Basque Government through the BERC 2018-2021 program and by the Spanish Ministry of Science, Innovation and Universities: BCAM Severo Ochoa accreditation SEV-2017-0718. J.H. was supported by NSF Grant DMS-1440140 and  EPSRC standard grant EP/R015104/1. C.D.S. was supported by NSF Grant DMS-1665373 and a Simons Fellowship.}

\begin{abstract} The theory of Fourier integral operators is surveyed, with an emphasis on local smoothing estimates and their applications. After reviewing the classical background, we describe some recent work of the authors which established sharp local smoothing estimates for a natural class of Fourier integral operators. We also show how local smoothing estimates imply oscillatory integral estimates and obtain a maximal variant of an oscillatory integral estimate of Stein. Together with an oscillatory integral counterexample of Bourgain, this shows that our local smoothing estimates are sharp in odd spatial dimensions. Motivated by related counterexamples, we formulate local smoothing conjectures which take into account natural geometric assumptions arising from the structure of the Fourier integrals.
\end{abstract}

\maketitle

\tableofcontents




\section{Basic definitions and examples of Fourier integral operators}\label{section: introduction}




\subsection{Motivating examples}\label{motivating examples section} This article explores aspects of the theory of \textit{Fourier integral operators} (FIOs), a rich class of objects which substantially generalises the class of pseudo-differential operators. The genesis of the theory can be found in various early works on hyperbolic equations \cite{Eskin1967, Egorov1969, Hormander1968, Lax1957, Maslov} but for the purposes of this article the study of FIOs began in earnest in the groundbreaking treaties of H\"ormander \cite{Hormander1971} and Duistermaat--H\"ormander \cite{Duistermaat1972}. 

For the majority of this discussion it will suffice to work with the following definition of a FIO, although below a more general and robust framework is recalled.

\begin{preliminary definition} A \textit{Fourier integral operator} (or FIO) $\mathcal{F}$ \textit{of order} $\mu \in \R$ is an operator, defined initially on the space of Schwartz functions $\mathcal{S}(\R^n)$, of the form 
\begin{equation}\label{preliminary FIO}
    \mathcal{F}f(x) := \frac{1}{(2\pi)^n}\int_{\hat{\R}^n}e^{i\phi(x;\xi)} a(x;\xi) \hat{f}(\xi)\,\ud \xi
\end{equation}
where
\begin{itemize}
    \item The \emph{phase} $\phi \colon \R^n \times \R^n \to \R$ is homogeneous of degree 1 in $\xi$ and smooth away from $\xi = 0$ on the support of $a$.
    \item The \emph{amplitude} $a \colon \R^n \times \R^n \to \R$ belongs to the symbol class $S^{\mu}$; that is, $a$ is smooth away from $\xi = 0$ and satisfies
    \begin{equation*}
    |\partial_x^{\beta} \partial_{\xi}^{\alpha} a(x;\xi)| \lesssim_{\alpha, \beta} (1+|\xi|)^{\mu - |\alpha|} \qquad \textrm{for all $(\alpha,\beta) \in \N_0^n \times \N_0^n$.}
    \end{equation*}
\end{itemize}
\end{preliminary definition}



Taking $\phi(x;\xi) := \langle x, \xi \rangle$, one immediately recovers the class of pseudo-differential operators associated to standard symbols (that is, symbols belonging to some class $S^{\mu}$). For the purposes of this article this is a somewhat trivial case, however, and it is constructive to consider some more representative examples of FIOs. 

\begin{example}\label{wave propagator example} Prototypical FIOs arise from the (euclidean) half-wave propagator, defined by 
\begin{equation}\label{wave propagator}
    e^{it\sqrt{-\Delta}}f(x) := \frac{1}{(2 \pi)^n}\int_{\hat{\R}^n} e^{i(\langle x, \xi \rangle + t|\xi|)} \hat{f}(\xi)\,\ud \xi.
\end{equation}
Under suitable regularity hypotheses on $f_0$ and $f_1$, if $f_+:=\frac{1}{2}(f_0 - i( \sqrt{-\Delta})^{-1}f_1)$ and $f_-:=\frac{1}{2}(f_0 + i (\sqrt{-\Delta})^{-1}f_1)$, \footnote{In general, $m(i^{-1}\partial_x)$ denotes the Fourier multiplier operator (defined for $f$ belonging to a suitable \emph{a priori} class)
\begin{equation*}
m(i^{-1}\partial_x)f(x) := \frac{1}{(2\pi)^n}\int_{\hat{\R}^n} e^{i \langle x, \xi \rangle} m(\xi)  \hat{f}(\xi)\,\ud \xi
\end{equation*}
for any $m \in L^{\infty}(\hat{\R}^n)$. The operator $m(\sqrt{-\Delta_x})$ is then defined in the natural manner via the identity $-\Delta_x = i^{-1}\partial_x \cdot i^{-1}\partial_x$.} then the function 
\begin{equation*}
    u(x,t):= e^{i t \sqrt{-\Delta}} f_+(x) + e^{-it \sqrt{-\Delta}}f_-(x)
\end{equation*}
solves the Cauchy problem
\begin{equation}\label{euclidean wave equation}
    \left\{ \begin{array}{l}
        (\partial_t^2 - \Delta)u(x,t) = 0  \\[3pt]
        u(x,0)  := f_0(x), \quad \partial_t u(x,0)  := f_1(x) 
    \end{array}
    \right. .
\end{equation}
Up to a constant multiple, each term in the expression for $u(x,t)$ is of the form
\begin{equation*}
    \mathcal{F}_tf(x) := \frac{1}{(2\pi)^n} \int_{\hat{\R}^n} e^{i(\langle x, \xi \rangle \pm t|\xi|)} |\xi|^{-j}\hat{f}(\xi) \,\ud \xi
\end{equation*}
 for either $j=0$ or $j=1$. These operators provide important examples of FIOs of order $-j$. Indeed, much of the motivation for the development of the theory of FIOs was to provide an effective counterpart to the theory of pseudo-differential operators to study hyperbolic, rather than elliptic, PDE, a fundamental example being the wave equation \eqref{euclidean wave equation}. The reader is referred to the original papers \cite{Hormander1971, Duistermaat1972} and the classical texts \cite{HormanderIV} and \cite{Duistermaat2011} for further discussion in this direction.
\end{example} 

\begin{example}\label{wave equation on a manifold example} One may also consider wave propagators on other Riemannian manifolds $(M,g)$, defined with respect to the Laplace--Beltrami operator $\Delta_g$. In particular, suppose $(M,g)$ is a compact $n$-dimensional Riemannian manifold, in which case $-\Delta_g$ has a discrete, positive spectrum which may be ordered $0 = \lambda_0^2 < \lambda_1^2 \leq \lambda_2^2 \leq \dots$ (here the eigenvalues are enumerated with multiplicity). Thus, one may write $-\Delta_g = \sum_{j=0}^{\infty} \lambda_j^2 E_j$ where each $E_j$ is the orthogonal projection in $L^2(M)$ onto a 1-dimensional eigenspace associated to the eigenvalue $\lambda_j^2$. For proofs of these facts see, for instance, \cite{Sogge2014, Zelditch}.

Now consider the half-wave propagator 
\begin{equation}\label{manifold propagator}
e^{it\sqrt{-\Delta_g}}f(x) := \sum_{j=0}^{\infty} e^{it\lambda_j} E_jf(x).
\end{equation}
If $u$ is defined as in the previous example (but now the initial data $f_0$, $f_1$ is defined on $M$ and the multipliers are interpreted in terms of the spectral decomposition), then this function solves the Cauchy problem
\begin{equation}\label{wave equation on a manifold}
    \left\{ \begin{array}{l}
        (\partial_t^2 - \Delta_g)u(x,t) = 0  \\[3pt]
        u(x,0)  := f_0(x), \quad \partial_t u(x,0)  := f_1(x) 
    \end{array}
    \right. .
\end{equation}

In local coordinates, one may construct a parametrix for the propagator \eqref{manifold propagator} which is of the form of a Fourier integral operator. In particular, for some $t_0 > 0$ one may write
\begin{equation*}
    e^{it\sqrt{-\Delta_g}}f(x) = \int_{\hat{\R}^n} e^{i \phi(x;t;\xi)} a(x;t;\xi) \hat{f}(\xi) \,\ud \xi + R_tf(x) \qquad \textrm{for all $0 < t < t_0$}
\end{equation*}
for some suitable choice of phase $\phi$ and 0-order symbol $a$, where $R_t$ is a smoothing operator (that is, a pseudo-differential operator with rapidly decaying symbol). Here the Fourier transform of $f$ is taken in the euclidean sense, in the chosen coordinate domain. This construction is a special case of a general result concerning \emph{strictly hyperbolic} equations (of arbitrary order) which dates back to Lax \cite{Lax1957}; further discussion can be found in \cite[Chapter 5]{Duistermaat2011} or \cite[Chapter 4]{Sogge2017}.
\end{example}

\begin{example}\label{averaging operator example} Closely related to the wave propagator \eqref{wave propagator} are the convolution operators
\begin{equation*}
    A_t f(x) := f \ast \sigma_t(x), \qquad t > 0
\end{equation*}
where $\sigma = \sigma_1$ is the surface measure on the unit sphere $\mathbb{S}^{n-1}$ and $\sigma_t$ is defined by
\begin{equation*}
    \int_{\R^n} f(x) \,\ud \sigma_t(x) := \int_{\R^n} f(tx) \,\ud \sigma(x).
\end{equation*}
When $n = 3$ the solution to \eqref{euclidean wave equation} at time $t$ is related to $A_t$ via the classical Kirchhoff formula (see, for instance, \cite[Chapter 1]{Sogge2008}). These averaging operators are also of significant interest in harmonic analysis and, in particular, the spherical maximal function of Stein \cite{Stein1976} and Bourgain \cite{Bourgain1986} is defined by $Mf(x) := \sup_{t > 0} A_t|f|(x)$. 

To see how such averages fall into the Fourier integral framework, recall that the method of stationary phase (see, for instance, \cite[Chapter VIII]{Stein1993} or \cite[Chapter 1]{Sogge2017}) yields the formula
\begin{equation*}
    \hat{\sigma}(\xi) := \sum_{\pm} e^{\pm i|\xi|} a_{\pm}(\xi)
\end{equation*}
for the Fourier transform of the measure $\sigma$, where $a_{\pm} \in S^{-(n-1)/2}$ are smooth symbols of order $-(n-1)/2$. Thus, one may write
\begin{equation*}
    A_tf(x) = \sum_{\pm} \frac{1}{(2\pi)^n}\int_{\hat{\R}^n} e^{i(\langle x, \xi \rangle \pm t|\xi|)} a_{\pm}(t\xi)\hat{f}(\xi)\,\ud \xi;
\end{equation*}
note that the operators appearing in this formula agree with those arising in Example \ref{wave propagator example} except for the choice of symbol. 
\end{example}




\subsection{Distributions defined by oscillatory integrals}\label{oscillatory integrals section} 

The remainder of this section will be dedicated to describing a more general framework for the study of FIOs. For much of this article the preliminary definition given in the preceding subsection is sufficient; the refined definitions are included here in order to relate this survey to the perspective espoused in many of the references, and in particular in the classical works \cite{Hormander1971, Duistermaat1972}. 

In contrast with the discussion in the previous subsection, here the operators will be defined in terms of a kernel. Formally, the kernel of the FIO in \eqref{preliminary FIO} is given by
\begin{equation*}
    K(x;y) := \frac{1}{(2\pi)^n}\int_{\hat{\R}^n}e^{-i(\langle y, \xi \rangle - \phi(x;\xi))} a(x;\xi) \,\ud \xi,
\end{equation*}
although without strong conditions on the symbol $a$ this integral is not defined in any classical sense. To give precise meaning to such expressions one appeals to the theory of distributions; the relevant concepts from this theory are reviewed presently.

\subsubsection*{Distributions} Given $W \subseteq \R^d$ open, let $\mathcal{D}(W)$ denote the space of test functions on $W$; that is, $\mathcal{D}(W)$ is the space $C_c^{\infty}(W)$ of $C^{\infty}$ functions with compact support in $W$ under the topology defined by $f_j \to f$ as $j \to \infty$ for $f_j, f \in \mathcal{D}(W)$ if
\begin{enumerate}[i)]
\item There exists a compact set $K\subset W$ containing  $\mathrm{supp}\, f$ and $\mathrm{supp}\, f_j$ for all $j \in \N$;
\item $\partial^{\alpha}_x f_j \to \partial^{\alpha}_x f$ uniformly as $j \to \infty$ for all $\alpha \in \N_0^d$.
\end{enumerate}
One then defines the space of distributions $\mathcal{D}'(W)$ on $W$ to be the dual topological vector space to $\mathcal{D}(W)$ endowed with the weak$^*$ topology. With this definition $\mathcal{D}'(W)$ is complete.\footnote{Here a sequence $(u_j)_{j=1}^n \subseteq \mathcal{D}'(W)$ is Cauchy if $(\langle u_j,f\rangle)_{j=1}^{\infty}$ is a Cauchy sequence of complex numbers for all $f \in \mathcal{D}(W)$.} 


\subsubsection*{Homogeneous oscillatory integrals} Now let $\varphi \colon W \times (\R^N\setminus \{0\}) \to \R$ be a smooth function, $a \in S^{\mu}(W \times \R^N)$ and consider the oscillatory integral formally defined by
\begin{equation}\label{oscillatory integral}
    I[\varphi,a](w) := \int_{\hat{\R}^N} e^{i \varphi(w;\theta)} a(w;\theta)\,\ud \theta \qquad \textrm{for $w \in W$}.
\end{equation}
If $\mu < -N$, then this integral converges absolutely and, moreover, the resulting function of $w$ defines a distribution on $W$ (by integrating $I[\varphi,a]$ against a given test function). If $\mu > -N$, then it is not clear that the expression \eqref{oscillatory integral} makes sense and additional hypotheses are required on $\varphi$ to give the integral meaning. In particular, suppose that 
\begin{equation}\label{phase condition 1}
    \textrm{$\varphi$ is homogeneous of degree 1 in $\theta$}
\end{equation}
and
\begin{equation}\label{phase condition 2}
    \textrm{$\nabla_{w,\theta}\varphi(w;\theta) \neq 0$ for all $(w;\theta) \in W \times (\R^N \setminus \{0\})$}
\end{equation}
where the gradient $\nabla_{w,\theta}$ is taken with respect to all the variables $(w; \theta)$. Now let $\beta \in C^{\infty}_c(\R^N)$ satisfy $\beta(0) = 1$ and consider the truncated integral
\begin{equation*}
    I^j[\varphi,a](w) := \int_{\hat{\R}^N} e^{i \varphi(w;\theta)} \beta(2^{-j} \theta)a(w;\theta)\,\ud \theta.
\end{equation*}
Each $I^j[\varphi,a]$ is a well-defined function which induces a distribution. Moreover, under the conditions \eqref{phase condition 1} and \eqref{phase condition 2}, a simple integration-by-parts argument allows one to deduce that for any $K \subseteq W$ compact
\begin{equation*}
    |\langle I^j[\varphi,a], f \rangle| \leq C_K2^{-j} \sum_{|\alpha| \leq k} \|\partial_x^{\alpha}f\|_{L^{\infty}(K)} \qquad \textrm{for all $f \in \mathcal{D}(W)$ with $\mathrm{supp}\,f \subseteq K$}
\end{equation*}
where $k$ satisfies $\mu < -N + k$ (see, for instance, \cite[Theorem 0.5.1]{Sogge2017}). Since $\mathcal{D}'(W)$ is complete, one may therefore define $I[\varphi,a]$ to be the distribution given by the limit of the sequence of distributions $I^j[\varphi,a]$.

\begin{definition} The distribution $I[\varphi,a]$, defined for $\varphi$ satisfying \eqref{phase condition 1} and \eqref{phase condition 2}, will be referred to as  \emph{(local)\footnote{The terminology \textit{local}, as opposed to \textit{global}, will become clearer in $\S$\ref{global section}.} homogeneous oscillatory integral}. By a slight abuse of notation, the distribution $I[\varphi, a]$ will also be denoted by the formal expression  \eqref{oscillatory integral}.
\end{definition}

In what follows, it will be useful to assume a further condition on the phase $\varphi$.

\begin{non degeneracy hypothesis} A smooth function $\varphi \colon W \times (\R^N \setminus \{0\}) \to \R$ satisfying \eqref{phase condition 1} and \eqref{phase condition 2} is a \emph{non-degenerate phase function} if, in addition, it satisfies
\begin{equation}\label{phase condition 3}
    \textrm{if $\partial_{\theta}\varphi(w;\theta) = 0$, then $\bigwedge_{j=1}^N \nabla_{w,\theta} \partial_{\theta_j}\varphi(w;\theta) \neq 0$.}
\end{equation}
\end{non degeneracy hypothesis}

The rationale behind this additional hypothesis will become apparent in $\S$\ref{invariance of phase section}.




\subsection{Local Fourier integral operators} For $X \subseteq \R^n$ and $Y \subseteq \R^m$ open, any distribution $K \in \mathcal{D}'(X \times Y)$ defines a natural continuous linear mapping $T \colon \mathcal{D}(Y) \to \mathcal{D}'(X)$ given by
\begin{equation}\label{Schwartz kernel}
    \langle T(f), g \rangle := \langle K, f \otimes g \rangle \qquad \textrm{for all $(f,g) \in \mathcal{D}(Y) \times \mathcal{D}(X)$.}
\end{equation}
In fact, a converse to this observation also holds, which is the content of the celebrated Schwartz kernel theorem (see, for instance, \cite[\S 5.2]{HormanderI}). In particular, given any continuous linear mapping $T \colon \mathcal{D}(Y) \to \mathcal{D}'(X)$ there exists a unique distribution $K\in \mathcal{D}'(X \times Y)$, referred to as the \emph{(Schwartz) kernel} of $T$, such that \eqref{Schwartz kernel} holds. 

\begin{definition} A continuous linear operator $\mathcal{F} \colon C^{\infty}_c(Y) \to \mathcal{D}'(X)$ is a \emph{(local) Fourier integral operator} if the Schwartz kernel is given by a homogeneous oscillatory integral $I[\varphi, a]$ for some non-degenerate phase function $\varphi \colon X \times Y \times \R^N \setminus \{0\} \to \R$ and amplitude $a \in S^{\mu}(X \times Y \times \R^N)$.
\end{definition}

Given a test function $f \in C^{\infty}_c(Y)$, by an abuse of notation the distribution $\mathcal{F}f$ will also be denoted by
\begin{equation}\label{FIO kernel definition}
    \mathcal{F}f(x) = \int_{\R^n} \int_{\R^N} e^{i \varphi(x;y;\theta)} a(x;y;\theta) \,\ud\theta\, f(y)\,\ud y.
\end{equation}

\begin{example}\label{averaging operator example kernel} The averaging operator from Example \ref{averaging operator example} can be expressed as
\begin{equation}\label{averaging operator n Fourier variables}
    A_tf(x) = \sum_{\pm} \frac{1}{(2\pi)^n} \int_{
    \R^n}\int_{\hat{\R}^n} e^{i(\langle x-y, \xi \rangle \pm t|\xi|)} a_{\pm}(t\xi)\,\ud \xi\,f(y)\,\ud y,
\end{equation}
where this formal expression is interpreted in the above distributional sense. Note that the phase $\varphi(x;y;\xi) := \langle x-y, \xi \rangle \pm t|\xi|$ satisfies the desired conditions \eqref{phase condition 1}, \eqref{phase condition 2} and \eqref{phase condition 3}.
\end{example}

There are significant short-comings in defining FIOs in this way. In particular, there are fundamental problems regarding uniqueness: a given operator will admit many distinct representations of the form \eqref{FIO kernel definition}. 

\begin{example}\label{alternative averaging operator example} Once again recall the operator $A_tf$ from Example \ref{averaging operator example}, which can be interpreted as taking an average of $f$ over the sphere $x + t\mathbb{S}^{n-1}$. For fixed $x, t$, this surface corresponds to the zero locus of the defining function
\begin{equation*}
    \Phi(x;t;y) := \frac{|x-y|^2}{t^2} - 1.
\end{equation*}
This allows one to rewrite the averaging operator as
\begin{equation*}
    A_tf(x) := \int_{\R^n} f(y)a(x;t;y) \,\delta(\Phi(x;t;y))\ud y 
\end{equation*} 
 where $\delta(\Phi(x;t;y))\ud y $ is the normalised induced Lebesgue measure on $x + t\mathbb{S}^{n-1}$ (see, for instance, \cite[Chapter XI, \S 3.1.2]{Stein1993} or \cite[Chapter VIII, \S3]{Stein2011}) and $a(x;t;y)$ is an appropriate choice of bump function. Using the heuristic identity
 \begin{equation*}
     \delta(x) = \frac{1}{2\pi} \int_{\R} e^{i \lambda x}\,\ud \lambda
 \end{equation*}
for the Dirac $\delta$ function, this leads to the expression
 \begin{equation}\label{one Fourier variable}
    A_tf(x) = \frac{1}{2\pi}\int_{\R^n} \int_{\hat{\R}} e^{i \lambda \Phi(x;t;y)}  a(x;t;y)\,\ud\lambda\, f(y)\,\ud y.
  \end{equation} 
Thus, one arrives at an alternative Fourier integral representation of the average $A_tf$ to that in \eqref{averaging operator n Fourier variables}. Although this argument has been presented as a heuristic, it is not difficult to make the details precise, provided \eqref{one Fourier variable} is interpreted correctly (that is, as converging in the sense of oscillatory integrals); the full details can be found, for instance, in \cite[Chapter XI]{Stein1993}.
\end{example}




\subsection{Wave front sets and equivalence of phase}\label{invariance of phase section} Examples \ref{averaging operator example kernel} and \ref{alternative averaging operator example} show that very different phase/amplitude pairs $[\varphi, a]$ can define the same Fourier integral operator. It is natural to ask whether one can formulate a ``coordinate-free'' or ``invariant'' definition of FIOs which does not rely on fixing a choice of phase and amplitude. Such a \emph{global} definition does indeed exist and is discussed in detail in \cite{Hormander1971, Duistermaat1972} as well as the texts \cite{Duistermaat2011, Sogge2017, Treves1980}. The full details of the global theory of Fourier integral operators is, however, beyond the scope of this article, but nevertheless here some of the basic ideas are presented. 

To arrive at a global definition of Fourier integral operators, it is necessary to analyse the geometry of the singularities of the underlying Schwartz kernel. This leads to the construction of a geometric object known as the \textit{canonical relation} for a given FIO, which is in some sense independent of the choice of phase function used to define the kernel (this is the content of H\"ormander's equivalence of phase theorem, discussed below). The idea is then to think of the FIO purely in terms of the canonical relation (and some order), without reference to a particular choice of pair $[\varphi, a]$.

To carry out the above programme, some basic definitions from microlocal analysis (which may be described as the geometric study of distributions) are required. 


\subsubsection*{The singular support} Once again, let $W \subseteq \R^d$ be some fixed open set. 

\begin{definition} The \emph{singular support} $\textrm{sing supp}\,u$ of $u \in \mathcal{D}'(W)$ is defined to be the set of points in $w \in W$ for which there exists no open neighbourhood upon which $u$ agrees with a $C^{\infty}$ function.
\end{definition}

The singular support identifies the location of the singularities of a distribution, but for a complete geometric description one must also understand the associated ``directions'' of the singularities. 

\begin{example}\label{measure on curve example} Let $\gamma \colon [0,1] \to \R^2$ be (a parametrisation of) a smooth curve and let $\mu$ be a smooth density on $\gamma$ in $\R^2$, viewed as a measure (and therefore a distribution) on the plane. In particular, there exists some $a \in C^{\infty}_c(\R)$ with support in $(0,1)$ such that $\mu$ is defined by
\begin{equation*}
    \int_{\R^2} f \ud \mu := \int_0^1 (f\circ \gamma)(t) a(t)\,\ud t \qquad \textrm{for all $f \in C(\R^2)$.}
\end{equation*}
It is immediate that the singular support of $\mu$ consists of the support of the measure (and is therefore a subset of the curve). Given $x_0 \in \mathrm{supp}\,\mu$, one expects that the singular direction should the normal to the curve at $x_0$. A rigorous formulation of this intuitive statement is discussed below. 
\end{example}

To identify the singular directions, one appeals to the correspondence between regularity and the decay of the Fourier transform. For instance, recall that any $f \in C_c^{\infty}(\R^n)$ satisfies
\begin{equation}\label{rapid decay of Fourier transform}
    |\hat{f}(\xi)| \lesssim_N (1 + |\xi|)^{-N} \qquad \textrm{for all $N \in \N$}
\end{equation}
 for $\xi \in \hat{\R}^n$ (and, moreover, the property $f \in C_c^{\infty}(\R^n)$ is completely characterised in terms of the decay of the Fourier--Laplace transform by the Paley--Weiner theorem \cite[\S 7.3]{HormanderI}). If $u \in \mathcal{D}'(W)$ and $w \notin \textrm{sing supp}\,u$, then it follows that there exists some $\psi \in C^{\infty}_c(W)$ satisfying $\psi(w) \neq 0$ for which $f := \psi u$ is a $C_c^{\infty}$ function satisfying \eqref{rapid decay of Fourier transform}. Given $w \in \textrm{sing supp}\,u$ and $\psi \in C^{\infty}_c(W)$ satisfying $\psi(w) \neq 0$ as above, the idea is now to analyse the directions in which \eqref{rapid decay of Fourier transform} fails for $f := \psi u$. Since in this case $\psi u$ is no longer guaranteed to be a function, the precise definition requires some facts about Fourier transforms of distributions, which are recalled presently. 


\subsubsection*{Tempered distributions and the Fourier transform} If $\mathcal{S}(\R^n)$ denotes the Schwartz space, then recall that the space of \emph{tempered distributions} is the dual $\mathcal{S}'(\R^n)$, which can be identified with a subspace of $\mathcal{D}'(\R^n)$. The Fourier transform $\hat{u} \in \mathcal{S}'(\R^n)$ of a tempered distribution $u \in \mathcal{S}'(\R^n)$ is defined by $\langle \hat{u}, f \rangle := \langle u, \hat{f} \rangle$ for all $f \in \mathcal{S}(\R^n)$. If $u \in \mathcal{D}'(W)$ is compactly-supported (in the sense that there exists a compact set $K \subset W$ such that $\langle u, f \rangle = 0$ whenever $f \in \mathcal{D}(W)$ is supported outside $K$), then $u$ automatically extends to a tempered distribution and, moreover, the Fourier transform $\hat{u}$ is a $C^{\infty}$ function which grows at most polynomially. For proofs of these facts see, for instance, \cite{HormanderI}.


\subsubsection*{The wave front set} Let $u \in \mathcal{D}'(W)$ be compactly supported. Define $\Gamma(u)$ to be the set of points $\eta \in \hat{\R}^n\setminus \{0\}$ for which there does not exist an open conic neighbourhood $\mathcal{C}$ upon which 
\begin{equation*}
    |\hat{u}(\xi)| \lesssim_N (1 + |\xi|)^{-N} \qquad \textrm{for all $N \in \N$ and all $\xi \in \mathcal{C}$.}
\end{equation*}
Given $u \in \mathcal{D}'(W)$ and $w \in W$ one then defines
\begin{equation*}
    \Gamma_w(u) := \bigcap_{\substack{\psi \in C^{\infty}_c(W)\\ \psi(w) \neq 0}} \Gamma(\psi u).
\end{equation*}
By the discussion following \eqref{rapid decay of Fourier transform}, it is clear that if $\Gamma_w(u) \neq \emptyset$, then $w \in \textrm{sing supp}\,u$.
\begin{definition} [Wave front set] If $u \in \mathcal{D}'(W)$, then the \emph{wave front set} $\mathrm{WF}(u)$ of $u$ is defined
\begin{equation*}
        \mathrm{WF}(u) := \big\{ (w;\xi) \in W \times (\R^d \setminus \{0\}) : \xi \in \Gamma_w (u) \big\}.
\end{equation*}
\end{definition}

\begin{example} Returning to the measure $\mu$ discussed in Example \ref{measure on curve example}, fix $x_0 \in \textrm{sing supp}\,\mu$ so that $x_0 = \gamma(t_0)$ for some $t_0 \in \mathrm{supp}\,a$. Suppose $\eta \in \hat{\R}^2 \setminus \{0\}$ does not lie in the linear subspace $N_{x_0}\gamma$ spanned by the normal to $\gamma$ at $x_0$. It is not difficult to show that there exists a conic neighbourhood $\mathcal{C}$ of $\eta$ and some $\varepsilon_0 > 0$ such that 
\begin{equation*}
    |\langle \xi, \gamma'(t) \rangle | \geq \varepsilon_0|\xi| \qquad \textrm{for all $\xi \in \mathcal{C}$ and $|t - t_0| < \varepsilon_0$.}
\end{equation*}
If $\psi \in C^{\infty}_c(\R^2)$ is chosen to have support in a sufficiently small neighbourhood of $x_0$, it therefore follows by non-stationary phase (that is, repeated integration-by-parts) that $|(\psi \mu)\;\widehat{}\;(\xi)| \lesssim_N (1 + |\xi|)^{-N}$ for all $N\in \N$ and $\xi \in \mathcal{C}$ and, consequently, $\Gamma_{x_0}(\mu) \subseteq  N_{x_0}\gamma$. On the other hand, if $\eta \in N_{x_0}\gamma \cap \mathbb{S}^1$ and $\psi \in C^{\infty}_c(\R^2)$ satisfies $\psi(x_0) \neq 0$, then the asymptotic expansion for oscillatory integrals (see, for instance, \cite[Chapter VIII]{Stein1993}) shows that $|(\psi \mu)\;\widehat{}\;(\lambda \eta)|$ fails to decay rapidly in $\lambda \geq 1$.
\end{example}

For a homogeneous oscillatory integral $I[\varphi, a]$, as defined in $\S$\ref{oscillatory integrals section}, it is not difficult to show that the wave front set of this distribution is contained in
\begin{equation*}
\Lambda_{\varphi} := \{(w, \partial_w\varphi(w; \theta)) \in W \times (\R^d \setminus \{0\}) : (w; \theta) \in \textrm{supp}\,a,\: \partial_{\theta}\varphi(w;\theta) = 0\}.
\end{equation*}
Indeed, as a rough sketch of why this should hold, taking the Fourier transform of $I[\varphi, a]\cdot \psi$ for some $\psi \in C^{\infty}_c(\R^d)$ yields an oscillatory integral in the $(w; \theta)$ variables with phase $\varphi(w;\theta) - \langle w, \xi \rangle$. By non-stationary phase (that is, integration-by-parts), one expects rapid decay away from the set of $(w;\xi)$ for which the phase admits a $(w;\theta)$-stationary point. Since the $w$-gradient of the phase is given by $\partial_w\varphi(w;\theta) - \xi$ and the $\theta$-gradient is $\partial_{\theta}\varphi(w;\theta)$, this naturally leads one to consider the set $\Lambda_{\varphi}$. The full details can be found, for instance, in \cite[Theorem 8.1.9]{HormanderI}.


\subsubsection*{Non-degeneracy and Lagrangian manifolds} If the phase function $\varphi$ is non-degenerate in the sense that \eqref{phase condition 3} holds, then it follows from the implicit function theorem that
\begin{equation*}
    \Sigma_{\varphi} := \big\{ (w, \theta) \in W \times (\R^N \setminus \{0\}) : \partial_{\theta}\varphi(w, \theta) = 0 \big\}
\end{equation*}
is a smooth $d$-dimensional submanifold. Moreover, one may readily verify that the map $\kappa \colon \Sigma_{\varphi} \to W \times (\R^d \setminus \{0\})$ given by $\kappa(w,\theta) := (w, \partial_w\varphi(w, \theta))$ is an immersion with image
$\Lambda_{\varphi}$. Typically, in this situation one identifies  $W \times (\R^d\setminus \{0\})$ with $T^*W\setminus 0$, the cotangent bundle of $W$ with the zero section removed. The rationale behind this is that, under the above identification, $\Lambda_{\varphi}$ has a special property defined with respect to the natural symplectic structure on $T^*W$.  Concepts from symplectic geometry form an important part of the analysis of FIOs, but will only be mentioned in passing here (see, for instance, \cite{Duistermaat2011} for a thorough introduction to symplectic differential geometry and its connection to Fourier integral theory).

\begin{definition} A smooth (immersed) submanifold $\Lambda \subseteq T^*W\setminus 0$ is \emph{conic} if it is conic in the fibres: that is, $(w, t\xi) \in \Lambda$ for all $t > 0$ whenever $(w, \xi) \in \Lambda$. Such a $\Lambda$ is a \emph{Lagrangian submanifold} if it is $d$-dimensional and the restriction of the \emph{canonical 1-form} $\omega := \sum_{j=1}^d  \xi_j \ud w_j$ on $T^*W$ to $\Lambda$ is identically zero. 
\end{definition}

It is not difficult to show that $\Lambda_{\varphi}$ is a (conic) Lagrangian submanifold.\footnote{The pull-back $\kappa^*\omega$ of $\omega$ is given by
\begin{equation*}
    \sum_{j=1}^d\partial_{w_j}\varphi(w,\theta)\ud w_j = \ud \phi - \sum_{i=1}^N\partial_{\theta_i}\varphi(w,\theta)\ud w_j.
\end{equation*}
This vanishes identically on $\Lambda_{\varphi}$ since $\partial_{\theta}\varphi(w, \theta) = 0$ and the homogeneity of $\varphi$ with respect to $\theta$ implies $\varphi(w,\theta) = \theta \partial_\theta \varphi (w, \theta) =0$.} Conversely, given any conic Lagrangian submanifold $\Lambda \subset T^*W\setminus 0$, one can show that locally $\Lambda$ agrees with $\Lambda_{\varphi}$ for some non-degenerate phase function $\varphi$  (see, for instance, \cite[\S 3.1]{Hormander1971}). 


\subsubsection*{Equivalence of phase} The correspondence between conic Lagrangian submanifolds and non-degenerate phase functions described above is clearly not unique: for instance, one may compose the phase function $\varphi$ with any fibre-preserving diffeomorphism $(w, \theta) \mapsto (w, \tilde{\theta}(w, \theta))$ to obtain a new phase function $\tilde{\varphi}$ which satisfies $\Lambda_{\varphi} = \Lambda_{\tilde{\varphi}}$. However, in this case it follows by the change of variables formula that $I[\varphi, a] = I[\tilde{\varphi}, \tilde{a}]$ where $\tilde{a}(w, \tilde{\theta}(w, \theta)) = a(w, \theta) |\partial_{\theta}\tilde{\theta}(w, \theta)|^{-1}$. Thus, provided the symbols are appropriately defined, the phases $\varphi$ and $\tilde{\varphi}$ define the same homogeneous oscillatory integral. 

Now suppose $\varphi$, $\tilde{\varphi}$ are two phase functions which satisfy $\Lambda_{\varphi} = \Lambda_{\tilde{\varphi}}$, but are not necessarily related by a fibre-preserving diffeomorphism. What can be said about the homogeneous oscillatory integrals in this case? A typical example of this situation has already appeared above. 

\begin{example}\label{spooky coincidence example} Fixing $t \in \R\setminus\{0\}$, consider the phase function $\varphi_t \colon \R^n \times \R^n \times (\R^n\setminus \{0\}) \to \R$ featured in Example \ref{wave propagator example} and Example \ref{averaging operator example}, given by $\varphi_t(x,y;\xi) := \langle x - y, \xi \rangle + t | \xi|$. Then a simple computation shows that
\begin{equation*}
    \Lambda_{\varphi_t} = \big\{(x,x + t \frac{\xi}{|\xi|},\xi,-\xi) : (x,\xi) \in \R^n\times \R^n \setminus\{0\}\big\}.
\end{equation*}
Now consider the phase function $\tilde{\varphi}_t \colon \R^n \times \R^n \times (\R \setminus \{0\}) \to \R$ given by 
\begin{equation*}
    \tilde{\varphi}_t(x,y;\lambda) := \lambda\Big(\frac{|x-y|^2}{t^2} - 1 \Big),
\end{equation*}
which is featured in the alternative representation for the averaging operator from Example \ref{alternative averaging operator example}. A simple computation shows that
\begin{equation*}
    \Lambda_{\tilde{\varphi}_t} = \Big\{\Big(x,y,2\lambda \frac{x-y}{t^2},-2\lambda \frac{x-y}{t^2}\Big) :  |x - y| = t\Big\}.
\end{equation*}
However, making the substitution $\xi = 2\lambda \frac{x-y}{t^2}$, this set agrees precisely with that in Example \ref{spooky coincidence example}.

Note that the fibres of $\varphi_t$ and $\tilde{\varphi}_t$ have different dimensions so clearly the two phases cannot be related via a fibre-preserving change of variables. 
\end{example}

It is still true that, for suitable choices of amplitude function, the phases discussed in Example \ref{spooky coincidence example} define the same homogeneous oscillatory integral (indeed, both phases are used to represent the same averaging operator $A_t$). These observations suggest the possibility of a \emph{unique} correspondence between conic Lagrangian manifolds $\Lambda$ and homogeneous oscillatory integrals. This correspondence is the subject of the following fundamental result of H\"ormander \cite{Hormander1971}.

\begin{theorem}[H\"ormander's equivalence of phase theorem \cite{Hormander1971}]\label{thm:inv phases} Suppose $\varphi$ and $\tilde{\varphi}$ are non-degenerate phase functions defined on a neighbourhood of $(w_0, \theta_0) \in W \times \R^N$ and $(w_0, \tilde{\theta}_0) \in W \times \R^{\tilde{N}}$, respectively, which define the same Lagrangian submanifold there.

Then every homogeneous oscillatory integral $I[\varphi,a]$ with $a \in S^{\mu + (d-2N)/4}(W \times \R^N)$ and $\mathrm{supp}\,a$ in a sufficiently small $\theta$-conic neighbourhood\footnote{Here a $\theta$-conic neighbourhood of $(w_0,\theta_0) \in W \times \R^N \setminus \{0\}$ is an open neighbourhood $U \subseteq W \times \R^N\setminus\{0\}$ of $(w_0,\theta_0)$ such that $(w, t\theta) \in U$ for all $t >0$ whenever $(w, \theta) \in U$.} of $(w_0,\theta_0)$ can also be written as $I[\tilde{\varphi},\tilde{a}]$ for some $\tilde{a} \in S^{\mu + (d-2\tilde{N})/4}(W \times \R^{\tilde{N}})$ with $\mathrm{supp}\,\tilde{a}$ in a small $\tilde{\theta}$-conic neighbourhood of $(w_0, \tilde{\theta}_0)$.
\end{theorem}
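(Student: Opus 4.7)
The plan is to follow Hörmander's classical two-step reduction: first equalise the number of phase variables, and then relate the two phases by a fibre-preserving change of variables. Throughout, all statements are local near $(w_0,\theta_0)$ and $(w_0,\tilde\theta_0)$, which is essential since the conclusion is only about small conic neighbourhoods of those points.

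Step 1: reduce to $N = \tilde N$. Suppose, without loss of generality, $N < \tilde N$ and set $N' = \tilde N - N$. The idea is to replace $\varphi$ by the phase
\[
\varphi^\sharp(w;\theta,\eta) := \varphi(w;\theta) + \tfrac{1}{2}\langle A\eta,\eta\rangle, \qquad \eta \in \R^{N'},
\]
where $A$ is a non-degenerate real symmetric $N' \times N'$ matrix (the extra quadratic form must be made properly homogeneous, which is handled by first localising in $\theta$ and then using a standard rescaling trick so that $\eta$ lives on the same homogeneity scale as $\theta$). One checks that $\varphi^\sharp$ is non-degenerate and that $\Lambda_{\varphi^\sharp} = \Lambda_{\varphi}$, since the extra critical equation $A\eta = 0$ forces $\eta = 0$. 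The method of stationary phase in the $\eta$ variables then expresses
\[
I[\varphi^\sharp, a^\sharp] = I[\varphi, a] \mod \text{lower-order terms},
\]
provided $a^\sharp(w;\theta,\eta)$ is chosen appropriately (essentially $a$ times a Gaussian bump in $\eta$). The order shift $(d-2N)/4$ in the statement is engineered precisely so that this reduction is order-preserving: adding $N'$ phase variables changes $(d - 2N)/4$ by $-N'/2$, which matches the $|\theta|^{-N'/2}$ gain from stationary phase in $N'$ variables. Thus we may assume $N = \tilde N$.

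Step 2: when $N = \tilde N$, construct a fibre-preserving change of variables $\theta = \Theta(w;\tilde\theta)$, smooth and positively homogeneous of degree $1$ in $\tilde\theta$, such that
\[
\tilde\varphi(w;\tilde\theta) = \varphi\bigl(w;\Theta(w;\tilde\theta)\bigr).
\]
This is the heart of the argument. The non-degeneracy condition \eqref{phase condition 3} ensures that $\Sigma_\varphi$ and $\Sigma_{\tilde\varphi}$ are smooth $d$-dimensional submanifolds and that the maps $\kappa,\tilde\kappa$ described after \eqref{phase condition 3} are diffeomorphisms onto $\Lambda_\varphi = \Lambda_{\tilde\varphi}$. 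Composing gives a diffeomorphism $\tilde\kappa^{-1}\circ\kappa \colon \Sigma_\varphi\to\Sigma_{\tilde\varphi}$ that commutes with projection to $W$; the task is to extend it off the critical set as a fibre-preserving, positively homogeneous diffeomorphism satisfying the displayed equation above. The extension is carried out by a Morse-type lemma with parameters applied to the function $\tilde\varphi(w;\tilde\theta) - \varphi(w;\theta)$ viewed as a function of the extra $\tilde\theta$ variables with $(w,\theta)$ as parameters; homogeneity of both phases combined with the fact that both vanish on their respective critical sets (since each phase is $\theta \cdot \partial_\theta\varphi$ on $\Sigma_\varphi$, which is zero) gives the needed compatibility to solve the resulting implicit equation.

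Step 3: with $\Theta$ in hand, the theorem follows by change of variables in the $\theta$ integral. Setting
\[
\tilde a(w;\tilde\theta) := a\bigl(w;\Theta(w;\tilde\theta)\bigr)\,\bigl|\det \partial_{\tilde\theta}\Theta(w;\tilde\theta)\bigr|,
\]
the Jacobian is positively homogeneous of degree $0$ in $\tilde\theta$, so $\tilde a$ lies in the same symbol class as $a$, which (since $N = \tilde N$) is $S^{\mu + (d - 2\tilde N)/4}$, as required. The support condition on $\tilde a$ follows from the fact that $\Theta$ is defined on a conic neighbourhood of $(w_0,\tilde\theta_0)$.

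The main obstacle is Step 2: producing the fibre-preserving, homogeneous diffeomorphism $\Theta$ from the identification of the two Lagrangians. It is genuinely geometric rather than computational — one must verify that the Morse normal form can be chosen smoothly in the base variables $w$ and compatibly with the fibre-wise homogeneity, which is the point at which the non-degeneracy hypothesis \eqref{phase condition 3} becomes essential. Step 1 is a routine stationary-phase computation once the correct homogenisation of the auxiliary quadratic form is set up, and Step 3 is a direct change of variables.
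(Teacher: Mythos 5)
The paper does not contain a proof of this theorem; it quotes Hörmander's result from \cite{Hormander1971} and uses it as a black box. So I can only judge your argument directly.

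There is a genuine gap in Step~2: once $N = \tilde N$, two non-degenerate phase functions parametrising the same Lagrangian near a point are \emph{not} always related by a fibre-preserving change of variables. If $\tilde\varphi(w;\tilde\theta) = \varphi\bigl(w;\Theta(w;\tilde\theta)\bigr)$, then on $\Sigma_{\tilde\varphi}$ the chain rule gives
\begin{equation*}
\partial^2_{\tilde\theta\tilde\theta}\tilde\varphi
= (\partial_{\tilde\theta}\Theta)^{\top}\bigl(\partial^2_{\theta\theta}\varphi\bigr)(\partial_{\tilde\theta}\Theta),
\end{equation*}
the term $\sum_j \partial_{\theta_j}\varphi\,\partial^2_{\tilde\theta\tilde\theta}\Theta_j$ vanishing on the critical set. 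This is a congruence of quadratic forms, so the signatures of the two fibre Hessians must agree on $\Sigma$. They need not: take $W = \R$, $N = \tilde N = 3$, $\varphi = w\theta_1 + (\theta_2^2+\theta_3^2)/\theta_1$ and $\tilde\varphi = w\theta_1 - (\theta_2^2+\theta_3^2)/\theta_1$ near $(\theta_1,\theta_2,\theta_3)=(1,0,0)$. Both are non-degenerate homogeneous phase functions parametrising the ray $\Lambda = \{(0,\tau) : \tau > 0\} \subset T^*\R$, but their fibre Hessians on $\Sigma$ are $\mathrm{diag}(0, 2/\theta_1, 2/\theta_1)$ and $\mathrm{diag}(0, -2/\theta_1, -2/\theta_1)$, of signature $+2$ and $-2$ respectively; no fibre-preserving $\Theta$ can relate them, yet the theorem applies to this pair.

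What is missing is the signature bookkeeping. In Hörmander's argument the auxiliary quadratic forms in your Step~1 are appended to \emph{both} phases when necessary, with $\mathrm{sgn}(A)$ chosen so that the augmented phases have an equal number of fibre variables \emph{and equal signature of the excess fibre Hessian} on the critical set; only then does a fibre-preserving change of variables exist, and the implicit-function/Morse argument you sketch in Step~2 is applied at that stage. The stationary-phase elimination of the auxiliary variables produces a constant $e^{i\pi\mathrm{sgn}(A)/4}$, and it is the discrepancy between these constants for $\varphi$ and for $\tilde\varphi$ that must be absorbed into $\tilde a$ — this is exactly the Maslov factor in the global theory. As written, your Step~1 augments only the phase with fewer variables and leaves $\mathrm{sgn}(A)$ unspecified, so it gives no freedom at all when $N = \tilde N$, and in general not enough. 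The fix is routine once the obstruction is noticed, but the example above shows the argument as stated does not close.
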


The equivalence of phase theorem suggests that rather than thinking of the distribution $I[\varphi,a]$ as defined by some choice of phase/amplitude pair $[\varphi,a]$, one should think of the distribution as determined by the conic Lagrangian submanifold $\Lambda$. This perspective is described in the following subsection.




\subsection{Global theory}\label{global section}  The equivalence of phase theorem allows much of the analysis of the previous sections to be lifted to the more general setting of smooth manifolds $W$. Given a conic Lagrangian submanifold $\Lambda \subset T^*W$, one roughly defines $I^{\mu}(W, \Lambda)$ to be the class of homogeneous oscillatory integrals which can be locally represented as some $I[\varphi, a]$ for some non-degenerate phase function $\varphi$ for which $\Lambda_{\varphi}$ locally agrees with $\Lambda$ and symbol $a$ belonging to the class $S^{\mu + (d-2N)/4}$. Here $N$ is the dimension of the fibres (that is, the number of Fourier variables $\theta$) in this local representation. To give more precise details of this definition requires a brief review of basic concepts pertaining to analysis on manifolds.

\subsubsection*{Distributions on manifolds}  Let $W$ be a $d$-dimensional smooth manifold so that $W$ is equipped with a system of coordinate charts $\kappa_{\alpha} \colon W_{\alpha} \to \tilde{W}_{\alpha}$, each of which is a diffeomorphism from some open subset $W_{\alpha} \subseteq W$ to an open subset $\tilde{W}_{\alpha} \subseteq \R^d$. 

\begin{definition} A \emph{distribution $u$ on $W$} is an assignment of a distribution $u_{\alpha} \in \mathcal{D}'(\tilde{W}_{\alpha})$ to every coordinate chart which satisfies the following consistency property: given two charts $\kappa_{\alpha_j} \colon W_{\alpha_j} \to \tilde{W}_{\alpha_j}$, $j=1,2$, the identity
\begin{equation*}
    \langle u_{\alpha_2}, f \rangle = \langle u_{\alpha_1}, f \circ \alpha_1 \circ \alpha_2^{-1} \rangle
\end{equation*}
holds whenever $f \in \mathcal{D}(\tilde{W}_{\alpha_2})$ is supported inside $\kappa_{\alpha_2}(W_{\alpha_1} \cap W_{\alpha_2})$. The space of all distributions on $W$ is denoted by $\mathcal{D}'(W)$.
\end{definition}

\begin{remark} It makes perfect sense, in analogy with the definition in the euclidean case, to consider the dual of the space of test functions on $W$. Elements of this dual are slightly different objects to the distributions defined above, however, and are known as \emph{distribution densities} (see, for instance, \cite[\S 6.3]{HormanderI}).
\end{remark}

\subsubsection*{Global homogeneous oscillatory integrals} Let $\Lambda \subseteq T^*W$ be a closed, conic Lagrangian submanifold and $\mu \in \R$. A \emph{global homogeneous oscillatory integral of order $\mu$} is a distribution $u \in \mathcal{D}'(W)$ such that for every coordinate chart $\kappa_{\alpha} \colon W_{\alpha} \to \tilde{W}_{\alpha}$ the associated distribution $u_{\alpha} \in \mathcal{D}'(\tilde{W}_{\alpha})$ is of the form $u_{\alpha} = I[\varphi_{\alpha}, a_{\alpha}]$ where:
\begin{enumerate}[i)]
    \item Each $\varphi_{\alpha}$ is a non-degenerate phase function defined on a $\theta$-conic open subset $U_{\alpha} \subseteq \tilde{W}_{\alpha} \times \R^{N_{\alpha}}$ for some integer $N_{\alpha} \in \N$. Furthermore, an open neighbourhood of $\Lambda$ is diffeomorphically mapped to
    \begin{equation*}
        \Lambda_{\varphi_{\alpha}} := \{(w, \partial_{w}\varphi(w, \theta)) : (w, \theta) \in U_{\alpha}, \quad \partial_{\theta}\varphi(w, \theta) = 0 \}
    \end{equation*}
    under the coordinates induced by $\kappa_{\alpha}$.\footnote{In particular, if $\pi \colon T^*W \to W$ denotes the projection onto the base point, then for each chart $\kappa \colon W_{\alpha} \to \tilde{W}_{\alpha}$ one may define the induced local coordinates on the tangent bundle $\tilde{\kappa} \colon \pi^{-1}(W_{\alpha}) \to \tilde{W}_{\alpha} \times \R^d$ by $\tilde{\kappa}(w, \xi) := (\kappa(w), (\ud \kappa_w)^{-\top} \xi).$} 
    \item Each $a_{\alpha} \in S^{\mu + (d - 2N_{\alpha})/4}(\R^d \times \R^{N_{\alpha}})$ is supported in $U_{\alpha}$ and has compact $w$-support. 
\end{enumerate}

\subsubsection*{Global FIOs and canonical relations} Fix a pair of manifolds $X$ and $Y$ with $\dim X = n$ and $\dim Y = m$ and let $\Lambda \subseteq T^* X\setminus 0 \times T^* Y \setminus 0$ be a conic Lagrangian submanifold. The (global) homogeneous oscillatory integrals in $I^{\mu}(X \times Y, \Lambda)$ define (global) Fourier integral operators via the Schwartz kernel on each coordinate chart. The resulting collection of operators is denoted by $I^{\mu}(X, Y, \mathcal{C})$ where $\mathcal{C}$ is what is known as the \emph{canonical relation}: it is the rotated and reflected copy of $\Lambda$ given by  
\begin{equation*}
    \mathcal{C} := \big\{ (x, \xi, y, -\eta) \in T^*X\setminus 0 \times T^* Y \setminus 0 : (x, y, \xi, \eta) \in \Lambda \big\}.
\end{equation*}
One typically works with the canonical relation $\mathcal{C}$ rather than $\Lambda$ since it is often easier (notationally speaking) to formulate various hypotheses over $\mathcal{C}$ and $\mathcal{C}$ arises naturally in the composition calculus for FIOs (see, for instance, \cite[Chapter 6]{Sogge2017}). Of course, $\mathcal{C}$ inherits the symplectic structure of $\Lambda$ and, in particular, if $\omega_X := \sum_{j=1}^n \xi_j \ud x_j$ and $\omega_Y := \sum_{j=1}^n \eta_j \ud y_j$ denote the canonical 1-forms on $X$ and $Y$, respectively, then $\omega_X - \omega_Y$ vanishes identically on $\mathcal{C}$.

\subsubsection*{Global versus local theory} In the \emph{global} approach to Fourier integral theory one typically frames the hypotheses on the operator in terms of geometric properties of the canonical relation (some examples of this will be given in \S\ref{sec:LS}, where the \emph{rotational} and \emph{cinematic} curvature conditions are discussed). For the majority of this article, however, it will suffice to work with a concrete representation of the operator given by a choice of phase and amplitude as in \eqref{preliminary FIO}. In this \emph{local} approach, the hypotheses on the operator are framed in terms of properties of the choice of phase function $\phi$ and its derivatives. 

The local approach will in fact afford no loss of generality, since the problems under consideration are all of a local nature and it is always possible to locally express any FIO as an operator of the form of \eqref{preliminary FIO}. Indeed, if $\text{dim }X=\text{dim }Y$, given any FIO as above, basic results in symplectic geometry (see, for instance, \cite[Proposition 3.7.3]{Duistermaat2011} or \cite[Proposition 6.2.4]{Sogge2017}) guarantee that the canonical relation $\mathcal{C}$  can be expressed locally as a graph (modulo a reflection) of the form
\begin{equation*}
(x, \eta) \mapsto (x, S(x,\eta), T(x,\eta), -\eta). 
\end{equation*}
Moreover, it is not difficult to show that $S = \partial_x\phi$ and $T = \partial_{\eta}\phi$ for some \emph{generating function} $\phi$. Indeed, the canonical 1-form $\omega_X - \omega_Y$ is given in the above coordinates by
\begin{equation*}
    \sum_{j=1}^n \big[S_j(x,\eta) - \sum_{i=1}^n \eta_i \partial_{x_j}T_i(x,\eta)\big]\ud x_j - \sum_{j=1}^n \big[\sum_{i=1}^n \eta_i \partial_{\eta_j}T_i(x,\eta)\big]\ud \eta_j.
\end{equation*}
By the Lagrangian property of $\mathcal{C}$, the coefficient functions must all vanish identically on the domain, which implies that $\phi(x,\eta) := \langle \eta, T(x,\eta)\rangle$ is a suitable generating function. Thus, the canonical relation induced by the phase function $\varphi(x,y, \xi) := \phi(x,\xi) - \langle y, \xi\rangle$ agrees locally with $\mathcal{C}$ and consequently, by the equivalence of phase theorem, the FIO admits a local expression of the form \eqref{preliminary FIO}.




\section{Local smoothing estimates}\label{sec:LS}

This survey is primarily concerned with the continuity of FIOs as maps between certain function spaces. Such problems are inherently local in nature and, consequently, for the majority of the discussion it will suffice to work FIOs of the form \eqref{preliminary FIO}. In particular, let $\mathcal{F}$ be an operator given by
\begin{equation}\label{FIO def again}
    \mathcal{F}f(x) := \frac{1}{(2\pi)^n}\int_{\hat{\R}^n}e^{i\phi(x;\xi)} a(x;\xi) \hat{f}(\xi)\,\ud \xi
\end{equation}
for a choice of phase $\phi$ and symbol $a \in S^\mu(\R^n \times \R^n)$ satisfying the conditions described in \S\ref{motivating examples section}. We are interested in two kinds of estimates:
\begin{enumerate}[1)]
\item $L^p$-Sobolev bounds 
\begin{equation}\label{fixed time inequality}
    \|\mathcal{F}f\|_{L^p_s(\R^n)} \lesssim \|f\|_{L^p(\R^n)}.
\end{equation}
Here $L^p_s(\R^n)$ denotes the standard Sobolev (Bessel potential) space defined with respect to the Fourier multipliers $(1 + |\xi|^2)^{s/2}$ (see, for instance, \cite[Chapter V]{Stein1970}).
\item Given a 1-parameter family of FIOs $(\mathcal{F}_t)_{t \in I}$ for $I \subseteq \R$ a compact interval, we will consider inequalities of the form
\begin{equation}\label{space-time inequality}
\Big(\int_I \|\mathcal{F}_tf\|_{L^p_s(\R^n)}^p\ud t\Big)^{1/p} \lesssim \|f\|_{L^p(\R^n)}.
\end{equation}
\end{enumerate}

A prototypical case which motivates \eqref{space-time inequality} is given by the family of half-wave propagators $\mathcal{F}_t := e^{it\sqrt{-\Delta}}$. In this case, taking $\mathcal{F} = \mathcal{F}_t$ for a given $t$ (or the composition of this operator with a pseudo-differential operator) in \eqref{fixed time inequality} leads to fixed-time estimates for solutions to the wave equation; owing to this, such $L^p_s$ bounds will often be referred to as \emph{fixed-time estimates} (regardless of whether the operator involves a time parameter). On the other hand, \eqref{space-time inequality} is a \emph{``space-time" estimate}. 

Clearly, if one has a uniform bound of the kind \eqref{fixed time inequality} for every operator belonging to a 1-parameter family $(\mathcal{F}_t)_{t \in I}$, then \eqref{space-time inequality} follows directly by integrating these estimates over the time interval $I$. However, in many situations averaging over time has an additional smoothing effect; this allows for stronger space-time estimates than those obtained trivially by averaging fixed-time inequalities. This phenomenon is referred to as \emph{local smoothing}. 

In this section known and conjectured local smoothing properties of FIOs are described. In contrast with the fixed time case, the necessary conditions on $p$ for which an inequality of the form \eqref{space-time inequality} can hold can be quite subtle, depending on various geometric properties of the phase. An indication of the key considerations is provided below.

It transpires that local smoothing estimates are substantially stronger than their fixed-time counterparts and have many applications and implications in harmonic analysis and PDE. For instance, as is well-known, the sharp range of local smoothing inequalities for the wave propagator is known to imply numerous major open problems in harmonic analysis, including the Bochner--Riesz problem on convergence of Fourier series and the Kakeya conjecture. Non-sharp local smoothing estimates can also be very useful, and provide powerful tools for studying a wide range of maximal and variational problems in harmonic analysis (see, for instance, \cite{BRS, GHLR,  GRY} for recent examples of this). An introduction to the vast array of applications of local smoothing estimates is provided in \S\S\ref{section: maximal estimates}-\ref{section: local smoothing and oscillatory integral estimates}.

It appears that sharp local smoothing inequalities are extremely difficult to prove and, indeed, in the prototypical case of the half-wave propagator $e^{it\sqrt{-\Delta}}$ in the plane obtaining the sharp range of exponents remains a challenging open problem (although there are numerous partial results: see \S\ref{euclidean local smoothing section} below and the appendix). However, there is a fairly complete understanding in cases where the operator has a particularly badly behaved underlying geometry. For these, somewhat pathological, examples, geometric considerations place rather stringent constraints on the range of admissible $p$ values; consequently, it has been possible to obtain the full range of $L^p$ local smoothing estimates. This was observed recently in \cite{BHS} and relies heavily on fundamental work of Wolff \cite{Wolff2000} on the local smoothing problem and an important extension of Wolff's work due to Bourgain--Demeter \cite{Bourgain2015}. These topics are discussed in detail in \S\S\ref{sec:Wolff}-\ref{sec:decoupling}. 




\subsection{Fixed-time estimates for FIOs}\label{fixed time estimates section} Before discussing local smoothing in detail, it is instructive to first consider fixed-time estimates \eqref{fixed time inequality} for FIOs, which are somewhat easier to understand and help motivate the local smoothing theory. Consider a Fourier integral operator $\mathcal{F}$ of order $\mu$ as in \eqref{FIO def again} and suppose the symbol $a$ is compactly supported in the $x$ variable. In order to obtain a non-trivial $L^p$ theory, it is necessary to impose some further conditions on the phase.

\begin{mixed hessian condition} The phase $\phi$ satisfies
\begin{equation}\label{local graph condition}
    \det \partial^2_{x \xi} \phi(x;\xi) \neq 0 \qquad \textrm{for all $(x;\xi) \in \supp a$.}
\end{equation}
\end{mixed hessian condition}

An obvious example of a phase function satisfying \eqref{local graph condition} is $\phi(x, \xi) := \langle x, \xi \rangle$, corresponding to the case of pseudo-differential operators. The phase function appearing in the euclidean wave propagator in Example \ref{wave propagator example} also satisfies this hypothesis, as do those arising in the manifold setting in Example \ref{wave equation on a manifold example}. It transpires that \eqref{local graph condition} has a natural geometric interpretation in terms of the canonical relation $\mathcal{C}$ introduced in \S\ref{global section}; this is described below in \S\ref{geometry canonical relation section}. 

It was shown by Eskin \cite{Eskin} and H\"ormander \cite{Hormander1971} that FIOs of order 0 satisfying the above hypotheses are bounded on $L^2$. In general, for $p \neq 2$, they are not bounded on $L^p$ but $L^p$-Sobolev estimates do hold with some (necessary) loss in regularity. The sharp range of estimates of this form were established by Seeger, Stein and the third author \cite{Seeger1991}.

\begin{theorem}[\cite{Seeger1991}]\label{fixed-time theorem}
If $\mathcal{F}$ is a FIO of order $\mu$ satisfying the mixed Hessian condition, then for all $1 < p < \infty$ the fixed-time estimate
\begin{equation}\label{fixed time FIO} 
    \| \mathcal{F}f \|_{L^p_{-\mu -\bar{s}_p }(\R^n)} \lesssim \| f \|_{L^p(\R^n)} 
\end{equation}
holds for $\bar{s}_p:= (n-1)\big|\frac{1}{p}-\frac{1}{2}\big|$.
\end{theorem}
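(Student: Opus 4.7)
The plan is to establish an endpoint estimate $\mathcal{F} \colon H^1(\R^n) \to L^1(\R^n)$ for FIOs of order $-(n-1)/2$ satisfying the mixed Hessian condition, and then to interpolate with the Eskin--H\"ormander $L^2 \to L^2$ boundedness of order-zero FIOs. Concretely, by applying complex interpolation to the analytic family $\mathcal{F}_z$ whose symbol is $a(x;\xi)(1+|\xi|^2)^{-z/2}$ between $\mathrm{Re}\,z = 0$ and $\mathrm{Re}\,z = (n-1)/2$, one deduces \eqref{fixed time FIO} for all $1 < p \leq 2$. The range $2 \leq p < \infty$ follows by duality: the adjoint of a mixed-Hessian FIO of order $\mu$ is (modulo smoothing errors) another mixed-Hessian FIO of order $\mu$, since the mixed Hessian of the transposed phase is the transpose of the original, and moreover $\bar{s}_p = \bar{s}_{p'}$.

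The core of the argument is the $H^1 \to L^1$ endpoint. By the atomic decomposition of $H^1(\R^n)$, it suffices to prove a uniform bound $\|\mathcal{F} f_Q\|_{L^1} \lesssim 1$ for every $H^1$-atom $f_Q$ supported on a ball $Q = B(\bar{x},r)$ with $\|f_Q\|_\infty \lesssim r^{-n}$ and $\int f_Q = 0$. The central tool is the \emph{second dyadic decomposition}. After a Littlewood--Paley decomposition $\mathcal{F} = \sum_k \mathcal{F}_k$ localising the symbol to a frequency annulus $|\xi| \sim 2^k$, one partitions the $\xi$-support of each $\mathcal{F}_k$ into angular sectors $\theta_\nu$ of aperture $\sim 2^{-k/2}$, giving $\mathcal{F}_k = \sum_\nu \mathcal{F}_{k,\nu}$. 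The choice of scale $2^{-k/2}$ is dictated by the uncertainty principle: on each sector, $\phi(x;\xi)$ can be replaced by its first-order Taylor expansion in the angular $\xi$-variables up to $O(1)$ errors.

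Stationary phase then shows that the kernel $K_{k,\nu}$ of $\mathcal{F}_{k,\nu}$ is concentrated (with rapid decay off) a curved plate of dimensions $2^{-k} \times 2^{-k/2} \times \cdots \times 2^{-k/2}$, whose core direction is determined, via the mixed Hessian condition, by the Hamilton flow of $\phi$ at the centre of the sector. Combining the pointwise kernel bounds on each plate with the cancellation $\int f_Q = 0$ handles the high-frequency range $2^k r \gtrsim 1$, while the complementary low-frequency pieces are dealt with by a crude size estimate. Summing in $(k,\nu)$ is delicate: the number of sectors at scale $2^k$ is $\sim 2^{k(n-1)/2}$, and an $\ell^1$ triangle inequality would be disastrous. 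Instead, one appeals to an almost-orthogonality/$TT^*$ argument, exploiting that the plates associated to different sectors $\nu \neq \nu'$ are geometrically separated---itself a consequence of the mixed Hessian condition, which ensures that $\xi \mapsto \partial_\xi \phi(x;\xi)$ is a local diffeomorphism and therefore maps distinct angular sectors to spatially separated tube directions.

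The main obstacle is the execution of this endpoint bound, and specifically the interaction between the second dyadic decomposition, the atomic cancellation and the almost-orthogonality step. One must simultaneously linearise $\phi$ on each sector, quantify the geometry of the resulting plates uniformly in $(k,\nu)$, extract the $H^1$ cancellation across all these pieces, and then recombine them through an orthogonality argument that exploits the mixed Hessian hypothesis at the quantitative level needed to beat the $2^{k(n-1)/2}$ pieces. The precise matching between the angular scale $2^{-k/2}$ and the Sobolev loss $(n-1)/2$ is the technical heart of the Seeger--Stein--Sogge proof, and any slackness at this stage will either fail to cover the full range of $p$ or fall short of the sharp loss $\bar s_p$.
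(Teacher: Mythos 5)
Your proposal reproduces the route the paper itself indicates: establish an $H^1(\R^n) \to L^1(\R^n)$ bound for FIOs of order $-(n-1)/2$ satisfying the mixed Hessian condition, complex-interpolate with the Eskin--H\"ormander $L^2$ theorem for order zero to obtain the range $1 < p \leq 2$, and then dualise for $2 \leq p < \infty$. Your sketch of the $H^1 \to L^1$ endpoint (atomic decomposition, Littlewood--Paley decomposition into frequency shells $|\xi| \sim 2^k$, the second dyadic decomposition of each shell into $2^{-k/2}$-sectors, plate kernel estimates obtained by linearising the phase on each sector) is also the correct outline of the Seeger--Stein--Sogge argument, and your duality remark (the transposed phase has mixed Hessian the transpose of the original, hence still nondegenerate) is fine.

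Two details in the sketch are inverted, however. First, the moment cancellation $\int f_Q = 0$ of the atom is used to control the \emph{low}-frequency pieces $2^k r \lesssim 1$, where $K_k(x;\cdot)$ varies slowly across the supporting ball $Q$; the \emph{high}-frequency pieces $2^k r \gtrsim 1$ are instead handled by introducing an exceptional set $N_Q$ adapted to $Q$ (a union of $\sim r^{-(n-1)/2}$ plates of dimensions $r \times r^{1/2} \times \cdots \times r^{1/2}$ and total measure $\sim r$), bounding $\|\mathcal{F}f_Q\|_{L^1(N_Q)}$ by Cauchy--Schwarz together with the $L^2$ estimate, and exploiting the rapid decay of the plate kernels $K_{k,\nu}$ off $N_Q$. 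Second, the $\ell^1$ triangle inequality over the sectors $\nu$ at a fixed scale $k$ is in fact harmless: for a symbol of order $-(n-1)/2$ one has $\sum_\nu \|K_{k,\nu}(x;\cdot)\|_{L^1} = O(1)$ uniformly in $k$, precisely because the $\sim 2^{k(n-1)/2}$ sectors are compensated by the $2^{-k(n-1)/2}$ decay built into the symbol; the delicate point is summability in $k$, and the ``almost-orthogonality'' you invoke enters only through Plancherel and the bounded overlap of Fourier supports when one passes to $L^2$ on the exceptional set --- there is no $TT^*$ argument as such. These are corrections to your heuristics only; the architecture of the proof is the right one and matches the paper's.
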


The proof of \eqref{fixed time FIO} in \cite{Seeger1991} follows from a $H^1(\R^n)$ to $L^1(\R^n)$ bound for FIOs of order $-\frac{n-1}{2}$ and interpolation with the aforementioned $L^2(\R^n)$ estimate for FIOs of order 0; this yields the results for $1 < p\le 2$ and the results for $2<p<\infty$ follow by duality.

As an example of an application of this theorem, one may apply the estimate to the FIOs arising in the parametrix for the half-wave propagator $e^{it\sqrt{-\Delta_g}}$ on a compact Riemannian manifold $(M,g)$. If $u$ is the solution to \eqref{wave equation on a manifold}, then one obtains the bound
\begin{equation}\label{fixed time estimate}
\|u(\,\cdot\, , t)\|_{L^p_{s - \bar{s}_p}(M)} \lesssim_{M,g} \|f_0\|_{L^p_{s}(M)} + \|f_1\|_{L^p_{s-1}(M)}
\end{equation}
for all $s \in \R$. Here $L^p_{s}(M)$ denotes the standard Sobolev (or Bessel potential) space, defined with respect to the spectral multiplier $(1 + \lambda^2)^{s/2}$ (see, for instance, \cite[Chapter 4]{Sogge2017}). Moreover, provided $t$ avoids a discrete set of times, the estimate \eqref{fixed time estimate} is sharp for all $1 < p < \infty$ in the sense that one cannot replace $\bar{s}_p$ with $\bar{s}_p - \sigma$ for any $\sigma > 0$. This provides an analogue of earlier bounds for solutions to the euclidean wave equation from \cite{Peral, MiyachiWave}. Theorem \ref{fixed-time theorem} can also be applied to solutions of more general strictly-hyperbolic equations, of any order: see \cite{Seeger1991} for further details.

\begin{remark}[Sharpness of fixed-time estimates]\label{counterexamples fixed time}
An integration-by-parts argument shows that for any $\alpha > 0$ the (distributional) inverse Fourier transform of $e^{-i|\xi|}(1 + |\xi|^2)^{-\alpha/2}$ agrees with a function $f_{\alpha}$. Moreover, $f_{\alpha}$ is rapidly decaying for $|x| \geq 2$ and for $|x| \leq 2$ satisfies
\begin{equation*}
    |f_{\alpha}(x)| \sim |1 - |x||^{-(n+1)/2 + \alpha}.
\end{equation*}
Thus, if $\alpha > \frac{n+1}{2}- \frac{1}{p}$, then $f_{\alpha} \in L^p(\R^n)$. On the other hand, 
\begin{equation*}
    |(1 - \Delta)^{-s/2} e^{i\sqrt{-\Delta}}f_{\alpha}(x)| \gtrsim |x|^{-(n - \alpha - s)} \qquad \textrm{for $|x| \lesssim 1$.} 
\end{equation*}
Thus, if $\alpha \leq n-\frac{n}{p} - s$, then $e^{i\sqrt{-\Delta}}f_{\alpha} \notin L^p_{-s}(\R^n)$. Comparing these two conditions shows that Theorem \ref{fixed-time theorem} is optimal for $2 \leq p < \infty$, in the sense that $\bar{s}_p$ cannot be replaced with any smaller exponent. The range $1 < p \leq 2$ then follows by duality. See \cite[Chapter IX, $\S$6.13]{Stein1993} for further details.
\end{remark}

\begin{remark}[Sharpness in the range $1<p \leq  2$]\label{counterexample dual regime}
In fact, one may also deduce the sharpness of Theorem \ref{fixed-time theorem} in the regime $1 < p \leq 2$ from a direct example rather than from a duality argument. Reasoning as in Remark \ref{counterexamples fixed time}, given $\alpha>0$, let $g_\alpha$ be a function whose distributional Fourier transform is $(1+|\xi|^2)^{-\alpha/2}$. The function $g_\alpha$ is rapidly decreasing at infinity and
$$
|g_\alpha(x)| \sim |x|^{-(n-\alpha)} \qquad \textrm{for $|x| \lesssim 1$;} 
$$
thus $g_\alpha \in L^p(\R^n)$ if $\alpha>n-\frac{n}{p}$. On the other hand,
$$
    |(1 - \Delta)^{-s/2} e^{i\sqrt{-\Delta}}g_{\alpha}(x)| \gtrsim |1-|x||^{-(n+1)/2 + \alpha + s},
$$
so $e^{i \sqrt{-\Delta}} g_\alpha \not \in L^p_{-s}(\R^n)$ if $\alpha < \frac{n+1}{2} - s - \frac{1}{p}$. Combining both conditions on $\alpha$ yields $\bar s_p$ in Theorem \ref{fixed-time theorem} cannot be replaced with any smaller exponent if $1 < p \leq 2$.
\end{remark}




\subsection{Local smoothing estimates for FIOs} We now turn to the subject of local smoothing estimates. Recall that the problem is to analyse a 1-parameter family $(\mathcal{F}_t)_{t \in I}$ of FIOs, the prototypical example being the wave semigroup  $e^{i t \sqrt{-\Delta}}$. It is convenient to formulate the problem in terms of a single Fourier integral operator mapping functions on $\R^n$ to functions on $\R^{n+1}$. In particular, consider an operator
\begin{equation}\label{FIO def n to n+1}
    \mathcal{F}f(x, t) := \frac{1}{(2\pi)^n}\int_{\hat{\R}^n}e^{i\phi(x,t;\xi)} a(x, t;\xi) \hat{f}(\xi)\,\ud \xi \qquad (x,t) \in \R^{n+1}
\end{equation}
where the symbol $a \in S^\mu(\R^{n+1} \times \R^n)$ is compactly supported in $x$ and $t$ and the phase function $\phi$ is homogeneous of degree 1 in $\xi$ and smooth away from $\xi=0$. The conditions on the phase are now formulated with respect to the space-time variables $(x,t)$ and the analogous condition to \eqref{local graph condition} reads as follows. 

\begin{mixed hessian condition} The phase $\phi$ satisfies:
\begin{itemize}
\item[H1)]$\mathrm{rank}\, \partial_{ \xi z}^2 \phi(x,t;\xi) = n$ for all $(x,t; \xi) \in \mathrm{supp}\,a \setminus 0$.  
\end{itemize}
Here and below $z$ is used to denote vector in $\R^{n+1}$ comprised of the space-time variables $(x,t)$.
\end{mixed hessian condition}

Trivially, under these hypotheses Theorem \ref{fixed-time theorem} implies the space-time estimate 
\begin{equation}\label{local smoothing FIO}
    \Big(\int_{\R} \| \mathcal{F}f(\,\cdot\,,t) \|_{L^p_{-\mu-\bar{s}_p}(\R^{n})}^p \,\ud t \Big)^{1/p} \lesssim \| f \|_{L^p(\R^n)}. 
\end{equation}
This range of exponents, which follows from fixed-time estimates alone, does not encapsulate any additional smoothing arising from taking the average in time. Moreover, without further conditions on the phase, no such additional smoothing is possible, in general, and the above range is in fact sharp (a standard example which demonstrates this is given by the Radon transform in the plane: see Example \ref{Radon revisited} below or \cite[Chapter 6]{Sogge2017}). In order to establish non-trivial local smoothing estimates, one restricts to the class of phases satisfying the following additional hypothesis.

\begin{curvature condition} The phase $\phi$ satisfies:
\begin{itemize}
\item[H2)] The generalised Gauss map, defined by $G(z; \xi) := \frac{G_0(z;\xi)}{|G_0(z;\xi)|}$ for all $(z; \xi) \in \mathrm{supp}\,a \setminus 0$ where
\begin{equation*}
G_0(z;\xi) :=  \bigwedge_{j=1}^{n} \partial_{\xi_j} \partial_z\phi(z;\xi),
\end{equation*}
satisfies
\begin{equation*}
\mathrm{rank}\,\partial^2_{\eta \eta} \langle \partial_z\phi(z;\eta),G(z; \xi)\rangle|_{\eta = \xi} = n-1
\end{equation*}
for all $(z; \xi) \in \mathrm{supp}\,a \setminus 0$.
\end{itemize}
\end{curvature condition}

Geometrically, the curvature condition means that for fixed $z_0$ the cone
\begin{equation}\label{cones in terms of phase}
\Gamma_{z_0}:=\{ \partial_z \phi(z_0;\eta): \: \eta \in \R^n \backslash 0 \textrm{ in a conic neighbourhood of } \eta_0\}
\end{equation}
is a smooth (conic) manifold of dimension $n$ with $n-1$ non-vanishing principal curvatures at every point. One may readily verify that the phase featured in the prototypical example of the half-wave propagator $e^{i t \sqrt{-\Delta}}$ satisfies both conditions H1) and H2). The same is also true for the phases arising from the parametrix construction for $e^{i t \sqrt{-\Delta_g}}$ in Example \ref{wave equation on a manifold example}.

Under the conditions H1) and H2), it is possible to show that for $2 < p < \infty$ there exists some  $\sigma(p)>0$ such that inequality \eqref{local smoothing FIO} holds with $\bar{s}_p$ replaced with $\bar{s}_p - \sigma(p)$. This corresponds to a regularity gain over the estimate \eqref{fixed time FIO} and is an example of the local smoothing phenomenon. The existence of local smoothing estimates of the type \eqref{local smoothing FIO} was first observed by the third author \cite{Sogge1991} in the context of the euclidean half-wave propagator $e^{i t \sqrt{-\Delta}}$. Shortly after, Mockenhaupt, Seeger and the third author \cite{Mockenhaupt1992, Mockenhaupt1993} established stronger local smoothing estimates in the general context of Fourier integral operators satisfying H1) and H2).




\subsection{The local smoothing conjecture}\label{euclidean local smoothing section} A natural question is to quantify the precise range of exponents for which \eqref{local smoothing FIO} holds for a given FIO $\mathcal{F}$ satisfying the hypotheses H1) and H2). It transpires that this is a difficult and largely unresolved problem, involving a subtle dependence on certain geometric properties of $\mathcal{F}$. 

\subsubsection*{The euclidean wave semigroup} To begin the discussion, we first consider the prototypical case of the wave semigroup $e^{it\sqrt{-\Delta}}$. In \cite{Sogge1991}, the following conjecture was formulated. 

\begin{conjecture}[Local smoothing conjecture]\label{LS conj euclidean}
For $n \geq 2$ the inequality
\begin{equation}\label{LS conj euclidean equation}
\Big( \int_1^2 \|e^{i t \sqrt{-\Delta}} f \|^p_{L^p_{-\bar{s}_p + \sigma}(\mathbb{R}^n )} \ud t \Big)^{1/p} \lesssim \|f\|_{L^p(\R^n)}
\end{equation}
holds for all $\sigma < 1/p $ if $\frac{2n}{n-1} \leq p < \infty$ and $\sigma<\bar{s}_p$ if $2 < p \leq \frac{2n}{n-1}$. 
\end{conjecture}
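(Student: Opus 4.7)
The plan is to reduce \eqref{LS conj euclidean equation} to frequency-localised estimates for the wave propagator and then recast those as Fourier extension estimates for the light cone, to which the decoupling machinery from \S\ref{sec:decoupling} can be applied. Concretely, let $P_\lambda$ denote a Littlewood--Paley projection to the annulus $|\xi|\sim\lambda$. Using the Littlewood--Paley square function and the homogeneity/support properties of the symbol, the conjecture reduces to a uniform bound of the form
\begin{equation*}
\Big(\int_1^2\|e^{it\sqrt{-\Delta}}P_\lambda f\|^p_{L^p(\R^n)}\,\ud t\Big)^{1/p}\lesssim_\varepsilon \lambda^{\bar s_p-\sigma+\varepsilon}\|P_\lambda f\|_{L^p(\R^n)},\qquad \lambda\gtrsim 1,
\end{equation*}
with the appropriate $\sigma$ in each range of $p$. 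The gain of $\lambda^{-\sigma}$ over the fixed-time bound of Remark \ref{counterexamples fixed time} is precisely the quantitative content of local smoothing.

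Next I would exploit the fact that on the support of $\widehat{P_\lambda f}$, the expression $e^{it\sqrt{-\Delta}}P_\lambda f(x)$ is a Fourier extension operator associated with the truncated cone $\{(\xi,|\xi|):|\xi|\sim\lambda\}$ in $\R^{n+1}$. After dilating by $\lambda^{-1}$, this extension lives on a neighbourhood of the cone of thickness $\sim 1$ and radius $\sim 1$; one then partitions the cone into $\sim \lambda^{(n-1)/2}$ plates of dimensions $1\times \lambda^{-1/2}\times \cdots\times \lambda^{-1/2}$, each of which, by the Cone Condition \textbf{H2)} (i.e.\ the $n-1$ non-vanishing principal curvatures of $\Gamma_{z_0}$), is essentially flat and behaves like a piece of the paraboloid of dimension $n-1$. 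The $\ell^2$-decoupling inequality of Bourgain--Demeter for the cone applied to this partition, combined with orthogonality and trivial $L^\infty$ control on each plate, yields \eqref{LS conj euclidean equation} for all $\sigma<1/p$ in the range $p\geq 2(n+1)/(n-1)$; interpolation with the $L^2$ estimate extends this down to $p=2n/(n-1)$ only when $n=2$.

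The main obstacle is the range $\tfrac{2n}{n-1}\leq p<\tfrac{2(n+1)}{n-1}$ in dimensions $n\geq 3$, and, more seriously, the lower range $2<p\leq \tfrac{2n}{n-1}$ where the target $\sigma<\bar s_p$ sits strictly beyond what decoupling alone can deliver. In $n=2$ the bottleneck region $4\leq p\leq 6$ can be closed by the polynomial-partitioning method of Guth and refinements due to Guth--Wang--Zhang, which exploit the Kakeya-type structure of wave packets along null directions on the cone; adapting such tools to higher dimensions appears to be the crucial difficulty. For the low range $p\leq \tfrac{2n}{n-1}$ the conjecture is known to imply Bochner--Riesz and Kakeya, so one expects any proof to require genuinely new ideas beyond the extension/decoupling paradigm, most plausibly a multilinear Kakeya-type input together with a $k$-broad/narrow decomposition capable of reaching the critical exponent without epsilon loss.
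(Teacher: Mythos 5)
The statement you have been asked to prove is a \emph{conjecture} (Conjecture~\ref{LS conj euclidean}), which the paper states explicitly is ``open in all dimensions.'' There is no proof of this statement in the paper to compare your proposal against; the paper surveys partial progress and establishes, via Theorems~\ref{thm:LS}, \ref{FIO decoupling theorem} and \ref{thm:BourgainDemeter}, only the restricted range $p\geq \tfrac{2(n+1)}{n-1}$, not the conjectured $p\geq \tfrac{2n}{n-1}$. Your proposal essentially reconstructs this partial result via frequency localisation and Bourgain--Demeter decoupling, which is fine as far as it goes, but you should be careful not to present the remaining steps as if they close the argument.

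Two concrete problems with your write-up. First, the claim that ``interpolation with the $L^2$ estimate extends this down to $p=2n/(n-1)$ only when $n=2$'' is false: for $n=2$ the decoupling endpoint is $p=6$ with $\sigma<1/6$, and interpolating against the $L^2$ energy identity (with $\sigma=0$) at $p=4$ gives only $\sigma<1/8$, strictly weaker than the conjectured $\sigma<\bar s_4=1/4$. Indeed $\tfrac{2(n+1)}{n-1}>\tfrac{2n}{n-1}$ for every $n\geq 2$, so the decoupling range is always strictly smaller than the conjectured one, and the interpolation loses rather than gains in regularity; this is exactly why the conjecture remains open (at the time of writing, the best known value at $p=4$, $n=2$ was $\sigma<3/16$ by J.~Lee, see Table~\ref{table: LS euclidean n=2}). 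Second, you invoke ``Guth--Wang--Zhang'' to close $4\leq p\leq 6$ in $n=2$: that result is not part of the literature surveyed in this paper, postdates it, and in any case you offer no argument for $n\geq 3$ or for the low range $2<p\leq \tfrac{2n}{n-1}$, which you correctly acknowledge requires new ideas. The upshot is that your proposal is a reasonable summary of the state of the art (decoupling gives $p\geq \tfrac{2(n+1)}{n-1}$; everything below is open), but it is not a proof of Conjecture~\ref{LS conj euclidean}, and it contains an incorrect interpolation step that appears to close a gap which in fact remains.
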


Note that the order of the half-wave propagator $e^{i t \sqrt{-\Delta}}$ is $\mu=0$, so the conjecture claims a $\sigma$-regularity gain with respect to the fixed time estimates \eqref{fixed time FIO} in Theorem \ref{fixed-time theorem}. This conjecture is open in all dimensions, although there have been numerous partial results which establish \eqref{LS conj euclidean equation} either for:
\begin{itemize}
    \item A restricted range of regularity \cite{Sogge1991, Mockenhaupt1992, BourgainSF, TV2, Lee} or
    \item A sharp gain in regularity for a restricted range of Lebesgue exponent \cite{Wolff2000, Laba2002, Lee2006, Garrigos2009, Garrigos2010, Heo2011, Bourgain2015}. 
\end{itemize}
It is remarked that in \cite{Heo2011} a strengthened version of the conjecture was in fact established, involving estimates with the endpoint regularity index $\sigma = 1/p$ (for a restricted range of $p$). The history of the problem is discussed in more detail in the appendix. 

For $p = 2$, Plancherel's theorem implies the energy conservation identity
\begin{equation}\label{energy conservation}
\| e^{it\sqrt{-\Delta}} f \|_{L^2(\R^{n} \times [1,2])} = \| f \|_{L^2(\R^n)} 
\end{equation}
whilst for $p = \infty$ one may show 
\begin{equation}\label{infty endpoint}
    \| e^{it\sqrt{-\Delta}} f \|_{L^\infty_{-\frac{(n-1)}{2} - \varepsilon}(\R^{n} \times [1,2])} \lesssim \| f \|_{L^\infty(\R^n)}.
\end{equation}
The estimate \eqref{infty endpoint} follows by bounding certain localised pieces of the kernel in $L^1$; this kind of argument is described in detail in \S\ref{bounding localised pieces section}. On a heuristic level, \eqref{infty endpoint} can be understood by comparison with the averaging operators $A_t$ from Example \ref{averaging operator example} which are automatically bounded on $L^{\infty}$ and roughly correspond to the composition of $e^{it\sqrt{-\Delta}}$ with a multiplier in $S^{-(n-1)/2}$. 

As a consequence of these simple estimates, \eqref{LS conj euclidean equation} is strongest at $p=\frac{2n}{n-1}$: the estimates for all other $p$ follow from the $p=\frac{2n}{n-1}$ case via interpolation with \eqref{energy conservation} and \eqref{infty endpoint}. Thus, Conjecture \ref{LS conj euclidean} amounts to the assertion that $e^{i t \sqrt{-\Delta}}$ is essentially (that is, modulo a necessary loss of $\varepsilon>0$ derivatives) bounded on $L^{2n/(n-1)}(\R^{n+1})$ locally in time.

\begin{remark}\label{no local smoothing at L2 remark} Historically, the local smoothing phenomenon was first observed in the context of $L^2$-type bounds for dispersive equations \cite{Constantin1988, Kato1983, Sjolin1987, Vega1988}. Here the setup is somewhat different. For instance, in the case of the Schr\"odinger equation a gain of $1/2$ a derivative is obtained when one integrates the solution locally with respect to time \textit{over a compact spatial region}:
\begin{equation}\label{Schrodinger local smoothing}
    \Big(\int_1^2\|e^{-it\Delta}f\|_{L^2(B(0,1))}^2\,\ud t \Big)^{1/2} \lesssim \|f\|_{L^2_{1/2}(\R^n)}.
\end{equation}
Of course, the spatial localisation is essential in \eqref{Schrodinger local smoothing}: the estimate cannot hold with a global $L^2(\R^n)$-norm owing to conservation of energy. In the case of the wave equation, the operator $e^{it\sqrt{-\Delta}}$ is local at scale $t$ (as is clear either from the Kirchhoff formula for the solution (see, for instance, \cite[Chapter 1]{Sogge2008}) or by  analysing the kernel of $e^{it\sqrt{-\Delta}}$ via (non-) stationary phase). Consequently, local and global $L^2$ estimates for the half-wave propagator are essentially equivalent and, thus, conservation of energy prohibits any analogous inequality of the form \eqref{Schrodinger local smoothing} for $e^{it\sqrt{-\Delta}}$. 
\end{remark}

It is worthwhile examining the examples which dictate the numerology appearing in Conjecture \ref{LS conj euclidean}. First of all, it is clear that no local smoothing is possible for $p = 2$ for reasons described in Remark \ref{no local smoothing at L2 remark} above. Furthermore, the example in Remark \ref{counterexample dual regime} showing the sharpness of the fixed-time estimates if $1 < p < 2$ immediately yields that no local smoothing estimates hold in this regime. The situation is different if $2 < p <\infty$.

\begin{remark}[Sharpness of local smoothing conjecture] The example used in Remark \ref{counterexamples fixed time} can be used to show that a gain of 1/p derivatives in the local smoothing conjecture would be best possible. In particular, let $f_{\alpha}$ be as defined in Remark \ref{counterexamples fixed time}, so that if $\alpha > \frac{n+1}{2} - \frac{1}{p}$, then $f_{\alpha} \in L^p(\R^n)$. Moreover, one may show that 
\begin{equation*}
    |(1 - \Delta)^{-s/2} e^{it\sqrt{-\Delta}}f_{\alpha}(x)| \gtrsim 
    |x|^{-(n-1)/2}|t-1-|x||^{-(n+1)/2 + \alpha + s} \quad \textrm{if $t\geq 2|x|+1$}
\end{equation*}
whenever $|x| \lesssim 1$. Thus, if $\alpha \leq n - \frac{n+1}{p} - s$, then
\begin{equation*}
    \Big( \int_1^2 \|e^{i t \sqrt{-\Delta}} f_{\alpha} \|^p_{L^p_{-s}(\mathbb{R}^n )} \ud t \Big)^{1/p} = \infty.
\end{equation*}
Comparing the two conditions on $\alpha$ shows that Conjecture \ref{LS conj euclidean} is optimal in the sense that $1/p$ cannot be replaced with any larger number. 
\end{remark}

\subsubsection*{Wave equations on manifolds} Given a compact $n$-dimensional Riemannian manifold $(M,g)$ one may also consider the local smoothing problem for the propagator $e^{i t \sqrt{-\Delta_g}}$, as defined in Example \ref{wave equation on a manifold example}. It is perhaps tempting to conjecture that \eqref{LS conj euclidean equation} should also hold for $e^{i t \sqrt{-\Delta_g}}$ for the same range of exponents as described in Conjecture \ref{LS conj euclidean}. This turns out to be somewhat na\"ive, however. In particular, Minicozzi and the third author \cite{Minicozzi1997} identified compact manifolds $(M,g)$ for which local smoothing fails to hold for all orders $\sigma < 1/p$ whenever $p<\bar{p}_{n,+}$ where
\begin{equation*}
    \bar{p}_{n,+}:=
\begin{cases}
 \frac{2(3n+1)}{3n-3} \quad \textrm{if $n$ is odd} \\[2pt]
\frac{2(3n+2)}{3n-2} \quad \textrm{if $n$ is even} \\
\end{cases};
\end{equation*}
see Figure \ref{conjectured exponents figure}. Furthermore, $(M,g)$ may be taken to be an arbitrarily small smooth perturbation of the euclidean metric on $\R^n$. Thus, one is led to the following conjecture.
\begin{conjecture}[Local smoothing conjecture: compact manifolds]\label{LS conj manifold}
For $n \geq 2$ and $(M,g)$  a compact Riemannian manifold of dimension $n$, the inequality
\begin{equation}\label{LS conj manifold equation}
\Big( \int_1^2 \|e^{i t \sqrt{-\Delta_g}} f \|^p_{L^p_{-\bar{s}_p + \sigma}(M)} \ud t \Big)^{1/p} \lesssim \|f\|_{L^p(M)}
\end{equation}
holds for all $\sigma < 1/p $ if $\bar{p}_{n, +} \leq p < \infty$. 
\end{conjecture}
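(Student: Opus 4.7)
My plan is to reduce the manifold problem to a space-time estimate for a Fourier integral operator in standard form and then to attack it via decoupling in the spirit of Beltran-Hickman-Sogge. The first step is the parametrix construction sketched in Example \ref{wave equation on a manifold example}: working in coordinate patches and for $t$ in a compact interval bounded away from $0$, one writes $e^{it\sqrt{-\Delta_g}}$ as an FIO of the form \eqref{FIO def n to n+1} of order $\mu = 0$, modulo an infinitely smoothing remainder. The phase $\phi$ is produced from the geodesic flow via Hamilton-Jacobi, and a direct verification shows that it satisfies both the mixed Hessian condition H1) and the cinematic curvature condition H2). A partition of unity on $M$ together with a standard Littlewood-Paley decomposition in $\xi$ then reduces \eqref{LS conj manifold equation} to the frequency-localized estimate
\begin{equation*}
\|\mathcal{F}_\lambda f\|_{L^p(\R^{n+1})} \lesssim_\varepsilon \lambda^{\bar{s}_p - 1/p + \varepsilon}\|f\|_{L^p(\R^n)}, \qquad \lambda \gg 1,
\end{equation*}
where $\mathcal{F}_\lambda$ denotes the frequency annulus $|\xi| \sim \lambda$ piece of the FIO above.

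The second step is to produce such a frequency-localized estimate at the critical exponent $p = \bar{p}_{n,+}$. Following the now-standard approach to cone multipliers, one would decompose the support of $\hat f$ into $\sim \lambda^{(n-1)/2}$ plates of dimensions $1 \times \lambda^{1/2} \times \cdots \times \lambda^{1/2}$ adapted to the cone $\Gamma_z$ from \eqref{cones in terms of phase}; the curvature condition H2) makes this decomposition geometrically meaningful, and a variable coefficient $\ell^p$-decoupling inequality for cones of Bourgain-Demeter / BHS type would let one trade the estimate for $\mathcal{F}_\lambda f$ for a superposition of essentially orthogonal plate contributions. The remaining range $p \in (\bar{p}_{n,+}, \infty)$ follows by interpolating the endpoint estimate with the trivial bound \eqref{infty endpoint}.

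The main obstacle is that the Bourgain-Demeter cone decoupling only yields the Euclidean exponent $p \geq 2n/(n-1)$, which in dimensions $n \geq 3$ is strictly smaller than $\bar{p}_{n,+}$; hence a direct appeal to known decoupling cannot suffice. What seems to be required is a refined variable-coefficient decoupling (or square function) inequality sensitive to the extra degeneration in the canonical relation exhibited by the Minicozzi-Sogge examples, where geodesic focusing concentrates mass in a way that forces the larger threshold $\bar{p}_{n,+}$. No such inequality is presently available -- and this is essentially why the conjecture remains open -- so I would expect any proof to rest on a new Kakeya-type input quantifying how the projections of $\Gamma_z$ onto lower-dimensional subspaces can degenerate under arbitrarily small perturbations of the flat metric, together with a bootstrap matching the $\sigma < 1/p$ local-smoothing gain against the precise dimension of the maximal focusing submanifold.
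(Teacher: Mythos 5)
The statement you were asked to prove is a \emph{conjecture}, not a theorem, and the paper does not prove it: Conjecture~\ref{LS conj manifold} is formulated as a target, motivated on the necessity side by the Minicozzi--Sogge counterexamples showing that $1/p-$ local smoothing on general compact manifolds can fail for $p < \bar{p}_{n,+}$. The paper's positive result (Theorem~\ref{thm:LS}, i.e.\ the BHS/Bourgain--Demeter variable-coefficient decoupling) only covers $p \geq \tfrac{2(n+1)}{n-1}$, which is a strictly smaller range than the conjectured $p \geq \bar{p}_{n,+}$. Your proposal is therefore essentially honest and correct in its conclusion: you set up the right reduction (local parametrix, Littlewood--Paley localization, verification of H1) and H2)), you identify the right tool (variable-coefficient decoupling), and you correctly recognize that this tool does not suffice and that closing the gap would require a genuinely new input sensitive to the geodesic focusing geometry. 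That honest diagnosis matches what the paper says about the open status of the conjecture.

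However, your reason for why decoupling falls short contains a numerical slip that reverses the logic. You write that ``Bourgain--Demeter cone decoupling only yields the Euclidean exponent $p \geq 2n/(n-1)$, which in dimensions $n \geq 3$ is strictly smaller than $\bar{p}_{n,+}$.'' If decoupling really delivered $1/p-$ local smoothing for $p \geq \tfrac{2n}{n-1}$, that would already prove the full Euclidean Conjecture~\ref{LS conj euclidean}, and since $\bar{p}_{n,+} > \tfrac{2n}{n-1}$, the manifold Conjecture~\ref{LS conj manifold} would follow a fortiori. The correct numerology is that decoupling only reaches $p \geq \tfrac{2(n+1)}{n-1}$, and since $\bar{p}_{n,+} < \tfrac{2(n+1)}{n-1}$ (e.g.\ for $n=3$, $\bar{p}_{3,+} = 10/3 < 4 = \tfrac{2(n+1)}{n-1}$), there is a genuine gap $[\bar{p}_{n,+}, \tfrac{2(n+1)}{n-1})$ for which no proof is available. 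You also write that you would interpolate from the endpoint $p = \bar{p}_{n,+}$ outward to $p=\infty$ using \eqref{infty endpoint}; but interpolation goes the other way — if one had the estimate at the \emph{critical} exponent $p = \bar{p}_{n,+}$, interpolation with \eqref{energy conservation} would fill in the \emph{subcritical} range $2 < p < \bar{p}_{n,+}$, while the supercritical range $p > \bar{p}_{n,+}$ must be obtained independently (as is already known for $p \geq \tfrac{2(n+1)}{n-1}$). These are easy fixes, and they do not change your final, correct assessment that the conjecture is open.
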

Note that the conjecture would automatically imply bounds of the form \eqref{LS conj manifold equation} in the $2 \leq p \leq \bar{p}_{n,+}$ range via interpolation with the $L^2$ energy estimate. For simplicity, the values for $\sigma$ in this range of $p$ are omitted.

The counterexamples in \cite{Minicozzi1997} were inspired by earlier work of Bourgain \cite{Bourgain1991, Bourgain1995} in the context of oscillatory integral operators and are geometric in nature. In particular, obstacles to \eqref{LS conj manifold equation} arise owing to so-called Kakeya/Nikodym compression phenomena for geodesics in $(M,g)$. Some related examples are discussed in detail below in \S\ref{sharpness section}.

\subsubsection*{General FIOs} In approaching Conjecture \ref{LS conj manifold} one may work in local coordinates; the problem then boils down to establishing local smoothing estimates for the FIOs featured in the parametrix for $e^{it\sqrt{-\Delta_g}}$. One may ask more generally whether such local smoothing estimates hold for all $\mathcal{F}$ satisfying the conditions H1) and H2). It turns out, however, that there are further examples of FIOs (which do not arise in relation to the half-wave propagators $e^{it\sqrt{-\Delta_g}}$) for which local smoothing is only possible on a strictly smaller range of exponents than $p\geq \bar{p}_{n,+}$. These examples are of a slightly indirect nature. In particular, in \cite{BHS} it was shown that local smoothing estimates \eqref{local smoothing FIO} imply certain oscillatory integral bounds: see Theorem \ref{thmo}. Counterexamples of Bourgain \cite{Bourgain1991, Bourgain1995} in the latter context can then be applied to the problem; the details of this argument are reviewed below in \S\ref{section: local smoothing and oscillatory integral estimates}. In particular, there exist choices of $\mathcal{F}$ satisfying H1) and H2) for which local smoothing fails to hold for all orders $\sigma < 1/p$ whenever $p < \bar{p}_n$ where
\begin{equation}\label{exponents general}
\bar{p}_n := \begin{cases} \frac{2(n+1)}{n-1} \quad \textrm{if $n$ is odd} \\[2pt]
\frac{2(n+2)}{n} \quad \textrm{if $n$ is even} \\
\end{cases};
\end{equation}
see Figure \ref{conjectured exponents figure}. Thus, one is led to the following general conjecture.

\begin{conjecture}[Local smoothing conjecture: FIOs]\label{LS conj FIO} Suppose $\mathcal{F}$ is a FIO with symbol of order $\mu$ satisfying conditions H1) and H2) above. The inequality 
\begin{equation}\label{eq:LS FIO}
    \Big( \int_{1}^2 \| \mathcal{F}f(\,\cdot\,,t) \|^p_{L^p_{-\mu - \bar{s}_p + \sigma}(\R^{n})} \, \ud t \Big)^{1/p} \lesssim \| f \|_{L^p(\R^n)}
\end{equation}
holds for all $\sigma < 1/p$ if $\bar{p}_n \leq p < \infty$. 
\end{conjecture}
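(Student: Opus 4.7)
The natural strategy is to combine a dyadic frequency decomposition with a finer angular wave-packet decomposition and then invoke a sharp $\ell^2$-decoupling inequality for the cone $\Gamma$ from \eqref{cones in terms of phase}, following the Wolff--Bourgain--Demeter paradigm \cite{Wolff2000, Bourgain2015}.

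First I would reduce to a frequency-localised estimate. Applying a Littlewood--Paley decomposition in $\xi$, absorbing $\mu$ and the loss $\bar{s}_p$ into Bessel-potential factors, and interpolating against the $L^2$ bound given by \eqref{energy conservation}, it suffices to establish
\begin{equation*}
\|\mathcal{F}f\|_{L^p(\R^n \times [1,2])} \lesssim_{\eps} \la^{\mu + \bar{s}_p - 1/p + \eps} \|f\|_{L^p(\R^n)}
\end{equation*}
for every $\eps > 0$ and every $f$ with $\mathrm{supp}\,\hat{f} \subseteq \{|\xi| \sim \la\}$, provided $p \geq \bar{p}_n$. The target regularity $\bar{s}_p - 1/p$ is exactly what the counterexamples of \S\ref{euclidean local smoothing section} force.

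Next I would partition the annulus $\{|\xi| \sim \la\}$ into a boundedly-overlapping family of caps $\{\theta\}$ of angular width $\la^{-1/2}$, writing $f = \sum_\theta f_\theta$ with $\hat{f}_\theta$ supported in $\theta$. The mixed-Hessian hypothesis H1) combined with stationary phase in $\xi$ guarantees that each piece $\mathcal{F}f_\theta$ is a wave packet whose space-time frequency support is concentrated in a $\la^{-1/2}$-plate tangent to the cone $\Gamma$ of \eqref{cones in terms of phase}, while H2) ensures that these plates inherit the $n-1$ non-vanishing principal curvatures of $\Gamma$. I would then apply the Bourgain--Demeter $\ell^2$-decoupling for the cone in $\R^{n+1}$ at scale $\la^{-1/2}$ to obtain
\begin{equation*}
\|\mathcal{F}f\|_{L^p(\R^n\times[1,2])} \lesssim \la^{\eps}\Big(\sum_\theta \|\mathcal{F}f_\theta\|_{L^p}^2\Big)^{1/2},
\end{equation*}
and estimate each $\|\mathcal{F}f_\theta\|_{L^p}$ via a single-plate kernel bound in the spirit of Theorem \ref{fixed-time theorem}, recombining using $\ell^p \hookrightarrow \ell^2$ (valid since $p \geq 2$) together with the almost-orthogonality of the pieces $f_\theta$.

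For odd $n$ the cone-decoupling exponent coincides with $\bar{p}_n = \tfrac{2(n+1)}{n-1}$ and the argument closes cleanly. The principal obstacle is the even-dimensional case: here $\bar{p}_n = \tfrac{2(n+2)}{n}$ lies strictly below the cone-decoupling exponent, so cone decoupling alone is insufficient and one must exploit additional structure---for example by slicing $\Gamma$ transversely to its generators and invoking a lower-dimensional paraboloid decoupling within each slice, or by developing a variable-coefficient analogue of Bourgain--Demeter that registers the full curvature of $\Gamma$ together with its conic dilation structure. Equally delicate is the rigorous justification of the plate localisation under the general hypotheses H1) and H2), which requires a careful wave-packet decomposition of $\mathcal{F}$ and control of the stationary-phase errors arising away from the canonical relation; this is where the passage from the model case of $e^{it\sqrt{-\Delta}}$ to a general FIO introduces genuine new difficulties.
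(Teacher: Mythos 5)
The statement you are asked to prove is a \emph{conjecture}, not a theorem of the paper: it is open, and the paper itself only establishes the odd-dimensional case (via Theorem~\ref{thm:LS}, proved in \cite{BHS} and sketched in \S\ref{sec:Wolff}--\S\ref{sec:decoupling}). Your proposal correctly reproduces the outline of the paper's argument for that case --- dyadic Littlewood--Paley reduction, $\la^{-1/2}$-angular decomposition, decoupling, and then bounding the localised pieces --- but it elides the one step that is actually nontrivial. You propose to ``apply the Bourgain--Demeter $\ell^2$-decoupling for the cone in $\R^{n+1}$'' as if $\mathcal{F}f$ were Fourier-supported on a fixed cone. For the Euclidean propagator this is literally true (cf.~\eqref{distributional identity}), but for a general variable-coefficient FIO there is no single cone: the space-time Fourier transform of $\mathcal{F}^\la f$ lives on a whole \emph{varying family} of conic hypersurfaces $\Sigma_z$, one for each base point $z$, and Bourgain--Demeter does not apply directly. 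The entire content of Theorem~\ref{FIO decoupling theorem} --- the variable-coefficient Wolff-type inequality --- is to overcome precisely this, via local approximation of $\mathcal{F}^\la$ by constant-coefficient operators on balls of radius $K\ll\la^{1/2}$, parabolic rescaling, and an induction-on-scales argument that propagates the constant-coefficient gain. Your closing sentence acknowledges this gap but offers no mechanism to close it; without it the sketch is not a proof.

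Two smaller points. First, the $\ell^2(L^p)$ cone decoupling with only $\la^{\e}$ loss holds only for $2\le p\le \tfrac{2(n+1)}{n-1}$; above the critical exponent the loss grows, and one should work instead with the $\ell^p(L^p)$ (Wolff-type) formulation with loss $\la^{\bar s_p-1/p}$ as in Theorem~\ref{FIO decoupling theorem}, or else prove the critical case and interpolate with the trivial $L^\infty$ bound~\eqref{infty endpoint}. Second, your recombination step cites ``$\ell^p\hookrightarrow\ell^2$ (valid since $p\ge 2$)'', but for $p\ge 2$ the inclusion runs the other way ($\ell^2\hookrightarrow\ell^p$); passing from the $\ell^2$-sum produced by decoupling to an $\ell^p$-sum costs a factor $N^{1/2-1/p}\sim\la^{\bar s_p/2}$ by H\"older, which must be accounted for. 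Finally, as you note yourself, this entire scheme cannot reach the even-dimensional range $\tfrac{2(n+2)}{n}\le p<\tfrac{2(n+1)}{n-1}$, so even if fully executed it would not establish the conjecture as stated --- nor does the paper claim to.
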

As in the case of the wave semigroup, the conjecture automatically implies bounds of the form \eqref{LS conj manifold equation} in the $2 \leq p \leq \bar{p}_{n}$ range, this time via interpolation with the $L^2$ bounds of Eskin \cite{Eskin} and H\"ormander \cite{Hormander1971}. 

It will be useful to introduce the following terminology.

\begin{definition} Given $2 < p < \infty$, we say there is \textit{$1/p-$ local smoothing} (or \textit{local smoothing of order $1/p-$}) for a FIO $\mathcal{F}$ as above if \eqref{eq:LS FIO} holds for all $\sigma < 1/p$. 
\end{definition}

In this language, the above conjectures may be stated succinctly as follows:\\

\noindent
\textbf{Conjecture \ref{LS conj euclidean}.} There is $1/p-$ local smoothing for $e^{it\sqrt{-\Delta}}$  for all $\frac{2n}{n-1} \leq p < \infty$.\\

\noindent
\textbf{Conjecture \ref{LS conj manifold}.} There is $1/p-$ local smoothing for $e^{it\sqrt{-\Delta}_g}$  for all $ \bar{p}_{n, +} \leq p < \infty$.\\

\noindent
\textbf{Conjecture \ref{LS conj FIO}.} If $\mathcal{F}$ is a FIO of order $\mu$ satisfying H1) and H2), then there is $1/p-$ local smoothing for $\mathcal{F}$ for all $\bar{p}_{n}\leq p < \infty$.\\

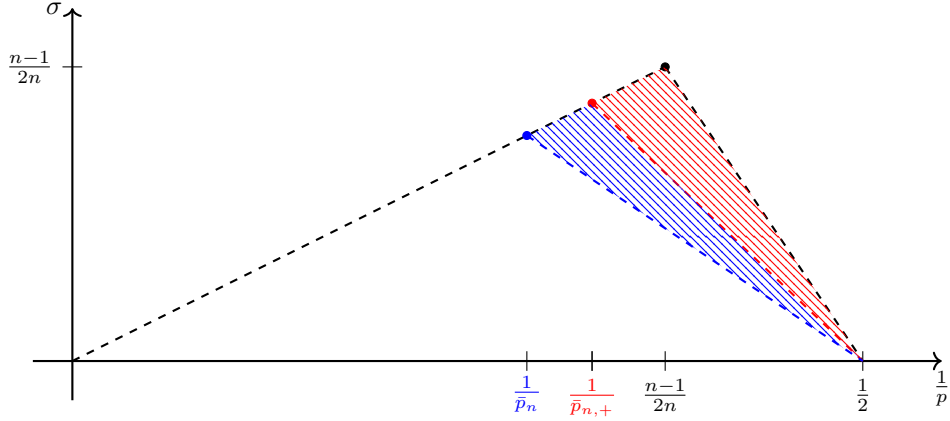
\begin{figure}
\begin{tikzpicture}[scale=2.6] 

\begin{scope}[scale=2]
\draw[thick,->] (-.1,0) -- (2.2,0) node[below] {$ \frac 1 p$};
\draw[thick,->] (0,-.1) -- (0,0.9) node[left] {$ \sigma$};

\draw (.025,3/4) -- (-.025,3/4) node[left] {$ \frac{n-1}{2n}$};
\draw (3/2,-0.025) -- (3/2,.025) node[below = 0.25cm] {$ \frac{n-1}{2n}$};

\draw (2,-0.025) -- (2,.025) node[below= 0.25cm] {$ \frac{1}{2}$}; 
\node[circle,draw=black, fill=black, inner sep=0pt,minimum size=3pt] (b) at (3/2,3/4) {};

\draw[thick, dashed]  (0,0)  -- (3/2,3/4); 

\draw[thick, dashed] (3/2,3/4) -- (2,0);

\draw[thick, dashed, color=red] (2.63/2,2.63/4) -- (2,0) ;

\fill[pattern=north west lines, pattern color=red] (2.63/2,2.63/4) -- (2,0) -- (3/2,3/4) -- (2.63/2,2.63/4);

\draw (2.63/2, -0.025) -- (2.63/2, 0.025) node[below=0.25cm] {\textcolor{red}{$\frac{1}{\bar{p}_{n, +}}$}}; 

\node[circle,draw=red, fill=red, inner sep=0pt,minimum size=3pt] (b) at (2.63/2,2.63/4) {};
\draw (2.63/2, .025) -- (2.63/2,-.025) ;

\draw (2.3/2,-0.025) -- (2.3/2,.025);
\draw (2.3/2, 0) node[below = 0.08cm] {\textcolor{blue}{$ \frac{1}{\bar{p}_n}$} };

\fill[pattern=north west lines, pattern color=blue] (2.3/2,2.3/4) -- (2,0) -- (2.63/2,2.63/4) -- (2.3/2,2.3/4);

\draw[thick, dashed, color=blue] (2.3/2,2.3/4) -- (2,0) ;

\node[circle,draw=blue, fill=blue, inner sep=0pt,minimum size=3pt] (b) at (2.3/2,2.3/4) {};

\end{scope}

\end{tikzpicture}

\caption{Exponents for various formulations of the local smoothing conjecture. In contrast to the euclidean case, for wave propagators on certain compact manifolds the {\color{red}red} region is inadmissible. There exist FIOs for which the {\color{blue}blue} region is also inadmissible.}
\label{conjectured exponents figure}
\end{figure}

The examples in \cite{BHS} show that Conjecture \ref{LS conj FIO} would be sharp \textit{across the entire class} of FIOs satisfying H1) and H2), but there are many situations where one expects a better range of estimates to hold (not least of all the case of the wave propagators described above). One may, in fact, formulate a more refined conjecture, which combines both Conjecture \ref{LS conj manifold} and Conjecture \ref{LS conj FIO} into a single statement, by considering finer geometric properties of the phase function and working with a more precise version of the hypothesis H2). This is discussed below in \S\ref{formulating a conjecture section}.




\subsection{Positive results} Recently in \cite{BHS}, the odd dimensional case of Conjecture \ref{LS conj FIO} was established. 

\begin{theorem}\label{thm:LS}
Let $\mathcal{F}$ be a FIO as in \eqref{FIO def n to n+1} satisfying H1) and H2) and with symbol of order $\mu$. There is $1/p-$ local smoothing for $\mathcal{F}$ for all $\frac{2(n+1)}{n-1} \leq p < \infty$.
\end{theorem}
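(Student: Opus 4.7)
The plan is to establish the estimate via a Littlewood--Paley decomposition combined with an angular decomposition adapted to the curvature condition H2), and an application of the Bourgain--Demeter \cite{Bourgain2015} $\ell^p$-decoupling theorem for cones at its sharp exponent. The critical value $p = \tfrac{2(n+1)}{n-1}$ in the statement is precisely the decoupling exponent for a conic hypersurface in $\R^{n+1}$ whose base has $n-1$ nonvanishing principal curvatures, and H2) provides exactly that curvature input; the larger range $p \geq \tfrac{2(n+1)}{n-1}$ then follows by interpolation with trivial $L^\infty$ bounds on $\mathcal{F}$.

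First, by a standard Littlewood--Paley reduction it suffices to show that for every dyadic $\lambda \geq 1$ the frequency-localised operator $\mathcal{F}_\lambda$, with symbol truncated to $|\xi|\sim\lambda$, satisfies
\[
\|\mathcal{F}_\lambda f\|_{L^p(\R^{n+1})} \lesssim_\varepsilon \lambda^{\mu + \bar s_p - 1/p + \varepsilon}\|f\|_{L^p(\R^n)}.
\]
Decompose the unit sphere in the $\xi$ variable into caps $\theta$ of angular radius $\lambda^{-1/2}$ and write $\mathcal{F}_\lambda = \sum_\theta \mathcal{F}_\lambda^\theta$. Hypotheses H1) and H2) ensure that, for each base point $z_0$, the image of a cap $\theta$ under $\xi \mapsto \partial_z\phi(z_0;\xi)$ is a $\lambda^{-1/2}$-arc of the conic hypersurface $\Gamma_{z_0}$ from \eqref{cones in terms of phase}, which has $n-1$ nonvanishing principal curvatures. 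A variable-coefficient version of the Bourgain--Demeter decoupling inequality then yields, for $p \geq \tfrac{2(n+1)}{n-1}$,
\[
\|\mathcal{F}_\lambda f\|_{L^p(\R^{n+1})} \lesssim_\varepsilon \lambda^\varepsilon \Bigl(\sum_\theta \|\mathcal{F}_\lambda^\theta f\|_{L^p(\R^{n+1})}^p\Bigr)^{1/p}.
\]

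Second, a sharp single-cap $L^p$ estimate is required. Each $\mathcal{F}_\lambda^\theta$ may be viewed, after a parabolic rescaling of $\theta$ to an $O(1)$-sized cap and an application of H1) to straighten the mixed derivatives of $\phi$, as a standard FIO of order $\mu$ satisfying the mixed Hessian condition. Theorem \ref{fixed-time theorem} applies to this rescaled operator; tracking the Jacobians of the rescaling and of the space-time localisation of each wave packet produces a bound of the form $\|\mathcal{F}_\lambda^\theta f\|_{L^p(\R^{n+1})}^p \lesssim \lambda^{p(\mu+\bar s_p)-1}\|f_\theta\|_{L^p(\R^n)}^p$, where $f_\theta$ is the Fourier restriction of $f$ to the cap and the extra factor $\lambda^{-1}$ encodes the one-derivative gain. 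Summing over $\theta$, using the standard vector-valued Littlewood--Paley inequality $\sum_\theta \|f_\theta\|_{L^p}^p \lesssim \|f\|_{L^p}^p$ (valid for $p \geq 2$), and combining with the decoupling estimate gives the frequency-localised target bound up to the $\lambda^\varepsilon$ loss, which is absorbed by taking $\sigma < 1/p - \varepsilon$ and summing the dyadic pieces.

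The main obstacle is the decoupling step: the Bourgain--Demeter theorem is formulated for the flat, rigid light cone, whereas the relevant cone $\Gamma_{z_0}$ here genuinely varies with the base point $z_0$. The natural strategy is to microlocalise at spatial scale $\lambda^{-1/2}$, on each patch of which $\Gamma_{z_0}$ is a small perturbation of a fixed flat cone; after straightening the phase using H1), the classical decoupling inequality applies, and the perturbation errors must be absorbed into the $\lambda^\varepsilon$ loss. Implementing this rigorously, while controlling lower-order terms of $\phi$ and ensuring that all rescalings remain compatible with the curvature hypothesis H2), is the technically most demanding part of the argument; a subsidiary difficulty is that the single-cap bound in the previous step must recover precisely the conjectured $1/p$-derivative gain with no further loss.
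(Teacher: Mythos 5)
Your overall framework matches the paper's: Littlewood--Paley reduction to dyadic frequency scales $\lambda$, angular decomposition into $\lambda^{-1/2}$ caps, a variable-coefficient decoupling inequality inherited from Bourgain--Demeter via approximation by frozen-coefficient operators, and then estimates for the individual pieces. That much is the correct skeleton, and it is exactly how Theorem~\ref{thm:LS} is established via Theorem~\ref{FIO decoupling theorem} and the recoupling inequality~\eqref{recoupling inequality}.

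However, there is a concrete error in your decoupling step. You claim
\begin{equation*}
\|\mathcal{F}_\lambda f\|_{L^p(\R^{n+1})} \lesssim_\varepsilon \lambda^\varepsilon \Bigl(\sum_\theta \|\mathcal{F}_\lambda^\theta f\|_{L^p(\R^{n+1})}^p\Bigr)^{1/p}
\end{equation*}
for $p \geq \tfrac{2(n+1)}{n-1}$. This is not the Bourgain--Demeter $\ell^p$ decoupling inequality: that inequality (Theorem~\ref{thm:BourgainDemeter}, and its variable-coefficient analogue Theorem~\ref{FIO decoupling theorem}) carries the factor $\lambda^{\alpha(p)+\varepsilon}$ with $\alpha(p) = \bar s_p - 1/p > 0$, not merely $\lambda^\varepsilon$. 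An $\ell^p$ decoupling with only an $\varepsilon$-loss in $\lambda$ is genuinely false; it is dictated by the cardinality $\#\Theta_{\lambda^{-1/2}} \sim \lambda^{(n-1)/2}$ of the cap family (equivalently, it is what one obtains from $\ell^2$ decoupling at the critical exponent followed by H\"older). You then compensate by asserting a single-cap estimate $\|\mathcal{F}_\lambda^\theta f\|_{L^p}^p \lesssim \lambda^{p(\mu+\bar s_p)-1}\|f_\theta\|_{L^p}^p$, i.e.\ $\|\mathcal{F}_\lambda^\theta f\|_{L^p} \lesssim \lambda^{\mu + \bar s_p - 1/p}\|f_\theta\|_{L^p}$, which is weaker than the sharp single-cap bound by exactly the factor $\lambda^{\alpha(p)}$. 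The sharp recoupling bound~\eqref{recoupling inequality} is
\begin{equation*}
\Bigl(\sum_\nu \|\mathcal{F}^\lambda_\nu f\|_{L^p(\R^{n+1})}^p\Bigr)^{1/p} \lesssim \lambda^\mu \|f\|_{L^p(\R^n)},
\end{equation*}
and it is proved not by appeal to Theorem~\ref{fixed-time theorem} after parabolic rescaling, but by real interpolation between the $p=\infty$ bound (from the $L^1_y$ estimate on the kernel $K^\lambda_\nu$, a consequence of~\eqref{kernel estimate}) and the $p=2$ bound (from H\"ormander's $L^2$ theorem for non-degenerate oscillatory integrals, combined with almost-orthogonality of the $\Gamma^\lambda_\nu$). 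Your two errors cancel numerically, but the logic is unsound: if you instead insert the \emph{correct} decoupling $\lambda^{\alpha(p)+\varepsilon}$ into your chain, your weaker single-cap bound returns $\lambda^{2\alpha(p)+\mu+\varepsilon}$, which does not give $1/p-$ local smoothing. The correct accounting is $\lambda^{\alpha(p)}$ from decoupling and $\lambda^\mu$ from the pieces; both factors are necessary and both are sharp.
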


This result extends earlier work of Wolff \cite{Wolff2000} and Bourgain--Demeter \cite{Bourgain2015} which establishes the theorem in the special case of the euclidean wave semigroup. 

Theorem \ref{thm:LS} is, up to endpoints, sharp across the entire class of FIOs in odd dimensions, in terms of both the regularity and the Lebesgue exponents. The question of what happens at the endpoint regularity index remains open; see \cite{Lee2013} for partial results in this direction. Thus, in order to prove estimates for a wider range of exponents than those provided by Theorem \ref{thm:LS} one must assume additional hypotheses on $\mathcal{F}$. In view of this, some natural refinements of the condition H2) are discussed in the following subsection. 

The method used to establish Theorem \ref{thm:LS} follows Wolff's approach to local smoothing \cite{Wolff2000}. This relies on establishing variable coefficient counterparts to the sharp $\ell^p(L^p)$ \textit{Wolff-type} (or \textit{decoupling}) inequalities of Bourgain--Demeter \cite{Bourgain2015}. It is a remarkable fact that the aforementioned decoupling inequalities are stable under smooth perturbations of the phase in the underlying operator, leading to the results in \cite{BHS}. A detailed review of this argument is provided in \S\ref{sec:Wolff}. An interesting aspect of this analysis is that the variable coefficient decoupling estimates can be derived rather directly as a consequence of the constant coefficient estimates, via an induction-on-scale argument. This is discussed in \S\ref{sec:decoupling}.

\subsection{Formulating a local smoothing conjecture for general FIOs}\label{formulating a conjecture section} Comparing Conjectures \ref{LS conj euclidean}, \ref{LS conj manifold} and \ref{LS conj FIO}, it is natural to ask what the special properties of the half-wave propagators $e^{it\sqrt{-\Delta_g}}$ and $e^{it\sqrt{-\Delta}}$ are which distinguish them from general FIOs and allow one to conjecture a larger range of local smoothing estimates in these cases. 

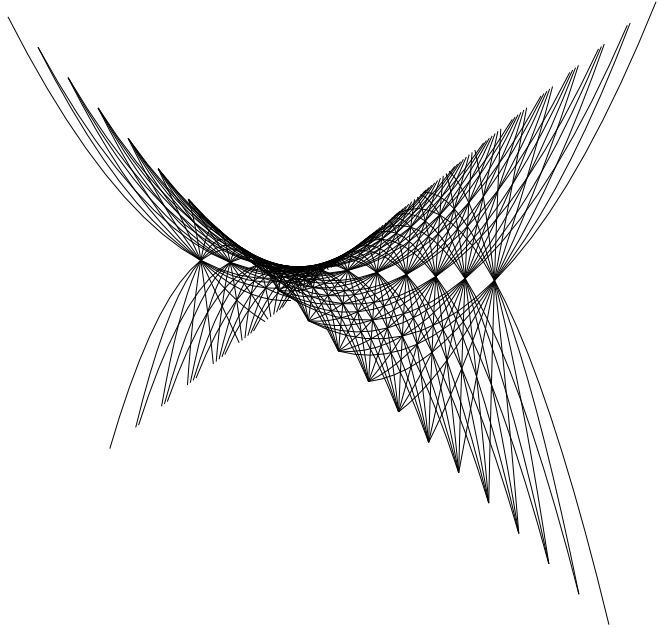
\begin{figure}
    \centering
    \begin{tikzpicture}[tdplot_main_coords, scale=2]
  
	\begin{scope}[rotate around x=5]
   \foreach \a in {-1,-0.8,...,1}
  {
    \foreach \b in {-1,-0.8,...,1}
    {
		\draw[black, very thin] 
		({-\a + \b*1.0},{-1.0},{\a*1.0-\b*(1.0*1.0)})
    \foreach \t in {-1.0,-0.9,...,1.0}
    {
        --({-\a - \b*\t},{\t},{-\a*\t-\b*(\t*\t)})
    }
    ;
    }
    }
		\end{scope}
			\end{tikzpicture}
    \caption{An example of a \emph{Kakeya/Nikodym set of curves}, arising from Bourgain's example \cite{Bourgain1991, Bourgain1995} (see also \cite{Guth}). Here a large family of distinct parabolas lies inside a 2-dimensional set (a hyperbolic paraboloid).}  
    \label{Kakeya figure}
\end{figure}

It is first remarked that the stronger numerology in the euclidean conjecture (Conjecture \ref{LS conj euclidean}) is related to deep questions in geometric measure theory. In particular, it is well-known that Conjecture \ref{LS conj euclidean} implies the Kakeya conjecture\footnote{This conjecture states that if $K \subseteq \R^n$ is compact and contains a unit line segment in every direction, then $K$ should have Hausdorff dimension $n$.} concerning the Hausdorff dimension of Kakeya sets in $\R^n$; for a discussion of the relationship between these and other important problems in harmonic analysis and geometric measure theory see, for instance, \cite{Minicozzi1997, Tao1999, Wolff1999} and the following section. A similar relationship holds when one considers wave propagators on manifolds and, moreover,  general FIOs. In particular, both Conjectures \ref{LS conj manifold} and \ref{LS conj FIO} imply bounds on the dimension of certain \emph{Kakeya} (or, more precisely, \emph{Nikodym}) \emph{sets of curves}. For instance, when dealing with the propagator $e^{it\sqrt{-\Delta_g}}$ the curves in question are geodesics in $(M,g)$. The precise definition of a Kakeya/Nikodym set of curves will not be recalled here, but the interested reader is directed to \cite{Bourgain2011, Guth, Minicozzi1997, Wisewell2005} or \cite[Chapter 9]{Sogge2017} for further details. The key observation is that, for certain examples, the families of curves which arise in this manner \textbf{fail} the Kakeya/Nikodym conjecture. More precisely, the curves can be arranged so that they lie in a set of small Hausdorff dimension; see Figure \ref{Kakeya figure}.\footnote{In the worst case scenario, the curves can be arranged to lie in a set of dimension $\lceil \frac{n+1}{2}\rceil$ where $n$ is the ambient dimension \cite{Bourgain1991, Bourgain1995, Bourgain2011}.} Such geometric configurations can be used to preclude local smoothing estimates near $\frac{2n}{n-1}$ for certain propagators $e^{it\sqrt{-\Delta_g}}$ and lead to the numerology in Conjecture \ref{LS conj manifold}. 

It remains to explain the difference in numerology between Conjecture \ref{LS conj manifold} for wave propagators on manifolds and Conjecture \ref{LS conj FIO} for general FIOs. Recall that the necessary condition in Conjecture \ref{LS conj FIO} arises from counterexamples of Bourgain \cite{Bourgain1991,Bourgain1995} for bounds for oscillatory integral operators; this is discussed in detail below in \S\ref{sharpness section}. It is remarked that one key feature of Bourgain's examples is that they give rise to \emph{hyperbolic} cones $\Gamma_{z_0}$: that is, the non-vanishing principal curvatures of $\Gamma_{z_0}$ have \emph{different signs}. Moreover, the analysis can be refined to give necessary conditions which depend on the difference between the number of positive and number of negative principal curvatures \cite{HI}.

\begin{figure}
\begin{center}
\begin{TAB}(c,1cm,2cm)[5pt]{|c|c|c|}{|c|c|c|}
 & $n$ odd & $n$ even    \\
\begin{tabular}[x]{@{}c@{}}$n-1$ non-vanishing \\curvatures\end{tabular} & $\displaystyle \textcolor{green!80!black}{\frac{2(n+1)}{n-1}}$ & $\displaystyle \frac{2(n+2)}{n}$  \\
\begin{tabular}[x]{@{}c@{}}$n-1$ positive \\curvatures\end{tabular}  & $\displaystyle \frac{2(3n+1)}{3n-3}$ & $\displaystyle \frac{2(3n+2)}{3n-2}$ \\
\end{TAB}
\end{center}
\caption{Conjectured endpoint values for the exponent $p$ for the sharp local smoothing estimates \eqref{local smoothing FIO} under various signature hypotheses on the phase. Theorem~\ref{thm:LS} establishes the odd dimensional case under the hypothesis of $n-1$ non-vanishing principal curvatures.}
\label{fig:signature}
\end{figure}

\begin{definition} Suppose $\mathcal{F}$ is an FIO which satisfies the conditions H1) and H2). We say $\mathcal{F}$ has \emph{signature} $\kappa$ for some integer $0 \leq \kappa \leq n-1$ if each of the cones $\Gamma_{z_0}$ satisfies
\begin{equation*}
    \kappa =|\textrm{\# positive principal curvatures} - \textrm{\# negative principal curvatures}| 
\end{equation*}
at every point. 
\end{definition}

With this definition, and in light of the modified versions of Bourgain's examples, one may formulate a refined version of Conjecture \ref{LS conj FIO}. In particular, letting
\begin{equation*}
\bar{p}_{n,\kappa} := 
\left\{\begin{array}{ll} 
2 \cdot \displaystyle \frac{\kappa + 2(n+1)}{\kappa + 2(n-1)} & \qquad \textrm{if $n$ is odd} \\[8pt]
 2 \cdot \displaystyle \frac{\kappa + 2n+3}{\kappa + 2n-1} & \qquad \textrm{if $n$ is even}
\end{array}\right. , 
\end{equation*}
the new conjecture reads thus. 

\begin{conjecture}[Local smoothing conjecture: FIOs]\label{LS conj FIO refined} Suppose $\mathcal{F}$ is a FIO with symbol of order $\mu$ satisfying conditions H1) and H2) and that $\mathcal{F}$ has signature $\kappa$. There is $1/p-$ local smoothing for $\mathcal{F}$ for all $\bar{p}_{n,\kappa} \leq p < \infty$. 
\end{conjecture}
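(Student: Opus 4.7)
Since this is a conjecture and not an established result, what follows is a plan of attack rather than a sketch of a complete argument. The plan is to follow the Wolff-Bourgain-Demeter strategy already deployed for Theorem \ref{thm:LS}, reducing the desired local smoothing bound to a \emph{signature-dependent} $\ell^p(L^p)$ decoupling inequality. First I would perform a Littlewood-Paley decomposition and rescale, reducing matters to obtaining a sharp $L^p$ bound (with an $\varepsilon$-loss) for $\mathcal{F}$ with symbol frequency-localised to an annulus of size $\lambda$. A dyadic partition of the frequency support into caps of size $\lambda^{-1/2}$, combined with wave packet analysis, then translates the estimate into a decoupling inequality for the cone $\Gamma_{z_0}$ associated to the phase. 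The new feature relative to Theorem \ref{thm:LS} is that $\Gamma_{z_0}$ is now a \emph{hyperbolic} cone whose $(n-1)$ non-vanishing principal curvatures split into positive and negative groups with signed imbalance $\kappa$.

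The second step is to pass from the required variable coefficient decoupling to its constant coefficient counterpart. I would carry this out by the induction-on-scales procedure outlined in $\S$\ref{sec:decoupling}: the key point is that the Bourgain-Demeter induction is stable under smooth perturbations of the underlying surface, and the curvature condition H2) ensures that each $\Gamma_{z_0}$ is locally a smooth perturbation of a model hyperbolic cone of signature $\kappa$ in $\R^{n+1}$. The output of this step is that, modulo an $\varepsilon$-loss, matters reduce to establishing sharp constant coefficient $\ell^p$ decoupling for model hyperbolic cones of signature $\kappa$ throughout the conjectured range $p \geq \bar{p}_{n, \kappa}$.

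The critical third step is the proof of this sharp signature-sensitive decoupling. The numerology $\bar{p}_{n,\kappa}$ is the endpoint one expects from a $k$-linear restriction / Kakeya input (with $k$ depending on $\kappa$) fed into the Bourgain-Guth multilinear-to-linear machinery: the positive-negative split should give an extra transversality gain over the fully hyperbolic case $\kappa = 0$, interpolating between Bourgain's oscillatory integral counterexamples at $\kappa = 0$ and the elliptic decoupling of Bourgain-Demeter at $\kappa = n-1$. Concretely I would attempt to adapt the Bourgain-Demeter scheme, replacing the standard $n$-linear Kakeya input with a signature-aware variant in the spirit of the Bourgain-Demeter-Guth work on the hyperbolic paraboloid and the more recent advances of Guth-Maldague.

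\textbf{Main obstacle.} The bottleneck is entirely the third step: sharp $\ell^p(L^p)$ decoupling for surfaces of mixed signature is one of the outstanding open problems of the theory. The elliptic extreme is the celebrated Bourgain-Demeter theorem and a handful of hyperbolic paraboloid cases are known, but even the latter usually fall short of the conjectured endpoint. Bourgain's counterexamples recalled in $\S$\ref{sharpness section}, together with the Kakeya/Nikodym compressions pictured in Figure \ref{Kakeya figure}, show that the hyperbolic geometry genuinely shrinks the sharp range and must be confronted directly. Thus, in contrast to Theorem \ref{thm:LS} where the variable coefficient reduction was the novel ingredient and the requisite constant coefficient decoupling was available off the shelf, the outstanding difficulty here lies in the constant coefficient decoupling itself, which would constitute a substantial theorem of independent interest.
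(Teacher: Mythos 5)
First, note that the statement you are addressing is a \emph{conjecture}: the paper does not prove it, and there is no argument in the paper to compare against. What the paper does provide is (i) a proof of Theorem~\ref{thm:LS}/Theorem~\ref{global local smoothing theorem}, which handles the \emph{worst-case} signature range $p \geq \frac{2(n+1)}{n-1}$ via the Wolff--Bourgain--Demeter decoupling scheme, and (ii) the signature-refined necessary conditions from Bourgain-type counterexamples (\S\ref{sharpness section}) that motivate the numerology $\bar{p}_{n,\kappa}$. Your steps (1) and (2) --- Littlewood--Paley reduction, angular decomposition into $\lambda^{-1/2}$-caps, and the induction-on-scales passage from variable to constant coefficients --- faithfully mirror the argument of \S\S\ref{sec:Wolff}--\ref{sec:decoupling}, and are sound.

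However, your step (3), as stated, cannot work: it is not merely an open problem but a provable dead end. The obstacle is that the sharp $\ell^p(L^p)$ decoupling exponent $\alpha(p)$ for conic hypersurfaces with $n-1$ non-vanishing principal curvatures is \emph{signature-independent} in the range $2 \leq p \leq \frac{2(n+1)}{n-1}$: the randomised exponential-sum example (choosing $\widehat{f}_\nu$ to be a narrow bump on each cap and using Khintchine) forces $\alpha(p) \geq \bar{s}_p/2$ for every such cone, and this lower bound uses only the $\lambda^{-1/2}$-separation of the cap centres, not the sign pattern of the curvatures. Since $\bar{s}_p/2 > \bar{s}_p - 1/p$ precisely when $p < \frac{2(n+1)}{n-1}$, the decoupling-plus-recoupling machinery of \S\ref{bounding localised pieces section} cannot yield $1/p-$ local smoothing below $p = \frac{2(n+1)}{n-1}$ for \emph{any} choice of signature. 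But $\bar{p}_{n,\kappa} < \frac{2(n+1)}{n-1}$ as soon as $\kappa \geq 1$ when $n$ is odd (and for every admissible $\kappa$ when $n$ is even), so the conjectured range is strictly beyond the reach of $\ell^p(L^p)$ decoupling, however sharp. This is exactly why the Bourgain--Demeter input in Theorem~\ref{thm:BourgainDemeter} (which is already stated for arbitrary signature) stops at $\frac{2(n+1)}{n-1}$, and why the paper does not claim the elliptic range $\bar{p}_{n,+}$ even for $e^{it\sqrt{-\Delta}}$. The genuine bottleneck you should identify is not a missing decoupling theorem but the need to go beyond the $\ell^p$ decoupling framework entirely --- for instance via the square function estimate \eqref{square function} (whose sharp exponent is signature-blind in a different way and conjecturally reaches $\frac{2n}{n-1}$), or via polynomial-method techniques of the kind used in \cite{Guth} for the oscillatory integral problem, which the paper advances in \S\ref{sharpness section} as the strongest evidence for the conjecture.
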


Since there are $n-1$ non-vanishing principal curvatures, in the worst case scenario the signature is given by
\begin{equation*}
    \kappa = \left\{\begin{array}{ll} 
0 & \qquad \textrm{if $n$ is odd} \\
 1 & \qquad \textrm{if $n$ is even}
\end{array}\right. .
\end{equation*}
Substituting these values into the formula for $\bar{p}_{n,\kappa}$, one recovers the exponent $\bar{p}_n$ from \eqref{exponents general} and therefore Conjecture \ref{LS conj FIO refined} subsumes Conjecture \ref{LS conj FIO}. On the other hand, in the best case scenario the principal curvatures all have the same sign and $\kappa = n - 1$. In this case, we see that $\bar{p}_{n, n-1}$ agrees with the exponent $\bar{p}_{n,+}$ from Conjecture \ref{LS conj manifold}. Furthermore, it is indeed the case that the Fourier integral operators associated to the wave semigroups $e^{i t \sqrt{-\Delta_g}}$ have signature $n-1$ (see, for instance, \cite{Minicozzi1997} or \cite[Chapter 4]{Sogge2017}). Thus, Conjecture \ref{LS conj FIO refined} also subsumes Conjecture \ref{LS conj manifold}. See Figure \ref{fig:signature}. 

From the above discussion it is not at all clear \emph{why} the signature should be important in the analysis of these operators, other than it is a consideration in the construction of counterexamples. In the case of oscillatory integral operators, the precise r\^ole of the signature is fairly well understood and is discussed in detail in \cite{Guth} (see also \cite{Bourgain2011, HI, Lee2006}). It is highly likely that the signature will play a similar r\^ole in the analysis of FIOs.




\subsection{The geometric conditions in terms of the canonical relation}\label{geometry canonical relation section} To round off this section, we describe how the local results of the previous subsections can be transcribed into the broader setting of global FIOs. In particular, we provide a natural geometric interpretation of the mixed Hessian and curvature conditions in terms of the canonical relation $\mathcal{C}$.

\subsubsection*{$L^p$ estimates and canonical graphs} Fix $X,Y$ smooth manifolds of dimension $n$ and a canonical relation $\mathcal{C} \subseteq T^*X\setminus 0 \times T^*Y \setminus 0$. Theorem \ref{fixed-time theorem} can easily be extended to the setting of global FIOs $\mathcal{F} \in I^{\mu}(X,Y;\mathcal{C})$ once the mixed Hessian condition is correctly interpreted in terms of the geometry of $\mathcal{C}$.

We first observe that in the specific context of a local operator $\mathcal{F}$ given by \eqref{FIO def again}, with $\dim X = \dim Y = n$ and a symbol $a \in S^{\mu}(\R^n \times \R^n)$, the order of $\mathcal{F}$ corresponds to the order $\mu$ of the symbol, and therefore $I^{\mu}(X,Y;\mathcal{C})$ is the correct class to work in if one wishes to extend the local fixed-time results described above. Indeed, by the convention established in \S\ref{global section} (which is motivated by the equivalence of phase theorem), the order $m$ of the operator satisfies 
\begin{equation}\label{relationship between number of variables}
    m = \mu - \frac{d - 2N}{4}
\end{equation}
where $d$ is the number of $(x;y)$ variables and $N$ is the number of Fourier variables. In the case of \eqref{FIO def again}, we have $d = 2n$ and $N = n$, and so $m$ and $\mu$ coincide. 

We now turn to describing the appropriate generalisation of the mixed Hessian condition.

\begin{projection condition} The natural projection mappings $\Pi_{T^*X} \colon \mathcal{C} \to T^*X\setminus 0$ and $\Pi_{T^*Y} \colon \mathcal{C} \to T^*Y\setminus 0$ are local diffeomorphisms.
\end{projection condition}
\begin{equation*}
\begin{tikzcd}
& \mathcal{C} \arrow[ld, swap, "\Pi_{T^*X}"]   \arrow[rd, "\Pi_{T^*Y}"] &   \\
T^*X \setminus 0 &  & T^*Y \setminus 0
\end{tikzcd}.
\end{equation*}

The projection condition clearly forces $\dim X = \dim Y$. It is also not difficult to show that if $\dim X = \dim Y$ and either one of the projections $\Pi_{T^*X}$ or $\Pi_{T^*Y}$ is a local diffeomorphism, then so too is the other.\footnote{This can be seen by expressing the operator in local coordinates: see the proof of Lemma \ref{projection condition lemma} below for a very similar argument.} Thus, for instance, an equivalent formulation of the projection condition is that $\dim X = \dim Y$ and
\begin{equation}\label{equivalent projection condition}
    \mathrm{rank} \,\ud\Pi_{T^*Y} = 2n,
\end{equation}
where $n$ is the common dimension of $X$ and $Y$. Yet another way to interpret this property, which will be useful later in the discussion, is that $\mathcal{C}$ is \textit{locally a canonical graph}. In particular, for every $\gamma_0=(x_0,\xi_0,y_0,\eta_0) \in \mathcal{C}$ there exists a symplectomorphism $\chi$ defined on a neighbourhood of $(x_0,\xi_0) \in T^*X\setminus 0$ and mapping into $T^*Y\setminus 0$ such that on this neighbourhood $\mathcal{C}$ is given by the graph
\begin{equation*}
\{(x,\xi, y, \eta): (y,\eta)=\chi(x,\xi)\}.
\end{equation*}

With this definition, the global variant of Theorem \ref{fixed-time theorem} reads thus.

\begin{theorem}[\cite{Seeger1991}]\label{global fixed-time theorem}
If $\mathcal{F} \in I^{\mu}(X,Y; \mathcal{C})$ is a global FIO where $\mathcal{C}$ satisfies the projection condition, then for all $1 < p < \infty$ the fixed-time estimate\footnote{Here, an $L^p_{\mathrm{comp}}(\R^n) \to L^p_{s, \,\mathrm{loc}}(\R^n)$ bound is interpreted as follows: for any pair of compact sets $\Omega_1, \Omega_2 \subseteq \R^n$ the \emph{a priori} estimate $\| \mathcal{F}f \|_{L^p_{s}(\Omega_2)} \lesssim_{\Omega_1, \Omega_2} \| f \|_{L^p(\Omega_1)}$ holds whenever $f \in C^{\infty}_c(\Omega_1)$.}
\begin{equation*}
    \| \mathcal{F}f \|_{L^p_{-\mu -\bar{s}_p, \,\mathrm{loc}}(\R^n)} \lesssim \| f \|_{L^p_{\mathrm{comp}}(\R^n)} 
\end{equation*}
holds.
\end{theorem}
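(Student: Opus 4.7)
The plan is to reduce the global statement to the local theorem (Theorem \ref{fixed-time theorem}) by showing that, up to a harmless partition of unity, any $\mathcal{F} \in I^{\mu}(X,Y;\mathcal{C})$ satisfying the projection condition can be locally written in the form \eqref{FIO def again} with a phase obeying the mixed Hessian condition. First I would fix compact sets $\Omega_1, \Omega_2 \subset \R^n$ and localise: using a finite partition of unity, write $\chi_2 \mathcal{F} \chi_1 = \sum_{i,j} \chi_{2,i} \mathcal{F} \chi_{1,j}$ where each $\chi_{2,i}, \chi_{1,j}$ is supported in a small coordinate patch. By the pseudolocal property of FIOs, the pieces for which the coordinate patches are disjoint have rapidly decaying Schwartz kernels (since the corresponding piece of $\mathcal{C}$ lies at a positive distance from the relevant portion of the wave front set) and therefore contribute bounded operators of all orders. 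Hence it suffices to bound a single localised piece $\mathcal{F}_0 := \chi_2 \mathcal{F} \chi_1$ where the symbol/canonical relation is concentrated in a small neighbourhood of a fixed point $(x_0, \xi_0, y_0, \eta_0) \in \mathcal{C}$.

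Next, I would invoke the canonical-graph reduction sketched at the end of \S\ref{global section}. Since the projection condition ensures $\mathcal{C}$ is locally a canonical graph, symplectic normal form gives a generating function $\phi(x,\eta) = \langle \eta, T(x,\eta)\rangle$ such that $\mathcal{C}$ is locally parametrised by
\begin{equation*}
  (x,\eta) \mapsto \big(x, \partial_x\phi(x,\eta), \partial_\eta\phi(x,\eta), -\eta\big).
\end{equation*}
By H\"ormander's equivalence of phase theorem (Theorem \ref{thm:inv phases}), after microlocalising further and possibly shrinking the support, $\mathcal{F}_0$ admits a representation of the form \eqref{FIO def again}, that is
\begin{equation*}
  \mathcal{F}_0 f(x) = \frac{1}{(2\pi)^n} \int_{\hat{\R}^n} e^{i\phi(x;\eta)} a(x;\eta) \hat{f}(\eta)\,\ud\eta,
\end{equation*}
with $a \in S^{\mu'}(\R^n \times \R^n)$. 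The bookkeeping formula $m = \mu' - (d-2N)/4$ with $d = 2n$ space-spectrum variables and $N = n$ Fourier variables yields $\mu' = m = \mu$, so no shift in order is incurred in this reduction.

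I would then verify that the projection condition translates directly to the mixed Hessian condition \eqref{local graph condition} for this phase. Writing out the Jacobian of $\Pi_{T^*Y} \circ \kappa$, where $\kappa$ is the parametrisation above, one finds the block matrix
\begin{equation*}
\begin{pmatrix} \partial^2_{x\eta}\phi(x,\eta) & \partial^2_{\eta\eta}\phi(x,\eta) \\ 0 & -I \end{pmatrix},
\end{equation*}
whose determinant equals $\pm \det \partial^2_{x\eta}\phi(x,\eta)$. Since $\Pi_{T^*Y}$ is a local diffeomorphism, this determinant is nonvanishing on the support of $a$, which is precisely the hypothesis \eqref{local graph condition}. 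With this in hand, Theorem \ref{fixed-time theorem} applies to $\mathcal{F}_0$ and yields the desired bound $\|\mathcal{F}_0 f\|_{L^p_{-\mu-\bar{s}_p}(\R^n)} \lesssim \|f\|_{L^p(\R^n)}$; summing over the finitely many patches in the partition of unity produces the required local-to-compact inequality.

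The main obstacle in making this rigorous is the reduction to the canonical graph form with matched support and matched order: one must check that the microlocal cutoffs used in H\"ormander's equivalence of phase theorem still cover the relevant portion of $\mathcal{C}$, and that the residual error (from the smooth remainder in the equivalence of phase theorem and from the off-diagonal patches) is genuinely a smoothing operator on the compact sets in question. Once this microlocal bookkeeping is handled, the analytic content is entirely supplied by the local theorem of \cite{Seeger1991}.
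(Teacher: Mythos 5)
Your proposal is correct and follows essentially the same route as the paper: localise, use the canonical-graph/equivalence-of-phase reduction to bring $\mathcal{F}$ into the form \eqref{FIO def again} with a generating function $\phi(x;\eta)$ and matched order $\mu$, check that the projection condition becomes the mixed Hessian condition $\det\partial^2_{x\eta}\phi\neq 0$, and then apply Theorem \ref{fixed-time theorem}. Your verification of the determinant condition directly from the Jacobian of the graph parametrisation is a slightly cleaner phrasing of the paper's computation \eqref{local graph coord} specialised to $N=n$, but the argument is the same.
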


Using the theory described in \S\ref{section: introduction}, it is not difficult to deduce Theorem \ref{global fixed-time theorem} as a fairly direct consequence of its local counterpart Theorem \ref{fixed-time theorem}. In particular, since the result is inherently local, one may assume that $\mathcal{F}\in I^\mu(X,Y;\mathcal{C})$ is given in local coordinates by some kernel
\begin{equation*}
    K(x;y)=\int_{\hat{\R}^N} e^{i \varphi (x,y;\theta)} a(x,y;\theta) \, \ud \theta,
\end{equation*}
where $a \in S^\mu (\R^n \times \R^n \times \hat{\R}^N )$ and $\varphi: \R^n \times \R^n \times \hat{\R}^N\backslash \{0\} \to \R$ is a non-degenerate phase function. Thus, one may write
\begin{equation}\label{local canonical relation}
    \mathcal{C} = \{(x,\partial_x \varphi(x,y;\theta), y, -\partial_y \varphi (x,y;\theta)) : \partial_\theta \varphi (x,y;\theta) =0\}
\end{equation}
and it follows that if $\mathcal{C}$ is a local canonical graph, then there exist smooth solutions to the equations
\begin{equation*}
    \xi=\partial_x \varphi (x,y;\theta), \qquad \partial_\theta \varphi (x,y;\theta)=0
\end{equation*}
in $(y,\theta)$. By the inverse function theorem, this amounts to the condition that the Jacobian of the map $(y,\theta) \mapsto (\partial_x \varphi (x,y;\theta), \partial_{\theta} \varphi (x,y;\theta))$ is non-vanishing: 
\begin{equation}\label{local graph coord}
    \det 
    \begin{pmatrix}
        \partial^2_{xy} \varphi & \partial^2_{x \theta} \varphi \\
        \partial^2_{\theta y} \varphi & \partial^2_{\theta \theta} \varphi
    \end{pmatrix}(x,y;\theta) \neq 0
    \qquad \textrm{whenever $\partial_\theta \varphi (x,y;\theta)=0$.}
\end{equation}
As described in \S\ref{global section}, one may further assume that $N = n$ and $\varphi$ has the special form $\varphi(x,y;\eta)=\langle y, \eta \rangle - \phi(x;\eta)$, where $\phi$ is smooth and homogeneous of degree 1 in $\eta$. In this case, the condition \eqref{local graph coord} then becomes 
\begin{equation*}
    \det \partial^2_{x \eta} \phi \neq 0,
\end{equation*}
which corresponds precisely with the mixed Hessian condition from \eqref{local graph condition}. 


\begin{example}[Variable coefficient averaging operators]\label{remark:RotCurv}
The class of FIOs of order $-\frac{n-1}{2}$ which satisfy the projection condition includes averaging operators over variable families of hypersurfaces which satisfy the \textit{rotational curvature} condition of Phong and Stein \cite{Phong1986} (see also \cite[Chapter XI $\S$3.1]{Stein1993}). Indeed, consider the family of hypersurfaces
\begin{equation*}
S_{x,t}=\{y \in \R^n : \Phi_t (x;y) =0\}
\end{equation*}
where $\Phi_t$ is a smooth defining function of $(t,x,y) \in [1,2] \times \R^n \times \R^n$. We say that $\Phi_t$ satisfies the rotational curvature condition if the Monge--Amp\`ere matrix associated to $\Phi_t$ is non-singular on $\Phi_t=0$: that is,
\begin{equation}\label{RotCurv condition}
 \mathrm{RotCurv}(\Phi_t)(x;y) :=  \det
    \begin{pmatrix}
        \Phi_t & \partial_y \Phi_t \\
        \partial_x \Phi_t & \partial^2_{xy} \Phi_t
    \end{pmatrix}(x;y)
    \neq 0 \qquad \textrm{whenever $\Phi_t(x;y)=0$.}
\end{equation}
As in Example \ref{averaging operator example}, the averaging operator
\begin{equation*}
A_t f(x):= \int_{\R^n} f(y) a(t,x,y) \delta(\Phi_t(x;y)) \, \ud y
\end{equation*}
may be written as
\begin{equation*}
A_t f(x):=\frac{1}{2\pi} \int_{\R^n} \int_\R e^{i \theta \Phi_t (x;y)} a(t,x,y) f(y) \, \ud \theta \, \ud y;
\end{equation*}
here $a \in S^0(\R \times \R^n \times \R^n)$. By Theorem \ref{thm:inv phases}, $A_t$ is a FIO of order $-\frac{n-1}{2}$ and one may readily verify that if $\Phi_t$ satisfies \eqref{RotCurv condition}, then the phase function $\varphi_t(x,y;\theta)=\theta \Phi_t(x;y)$ satisfies the condition \eqref{local graph coord}.
\end{example}

\begin{example}[Spherical averages]\label{example: sph rotcurv}
As a special case of the previous example, let $A_t$ denote the averaging operator associated to the family of spheres $x+t\mathbb{S}^{n-1}$, with defining function
\begin{equation*}
\Phi_t(x;y)=\frac{|x-y|^2}{t^2}-1.
\end{equation*}
In this case, $\mathrm{RotCurv}(\Phi_t)(x;y) = (-2)^{n+1}t^{-2n}$, which is non-vanishing. In general, whenever the operator is \emph{translation-invariant}, in the sense that the family of hypersurfaces is given by $x \mapsto x+tS_0$ for some fixed $S_0$, the rotational curvature is nonvanishing if and only if the Gaussian curvature of $S_0$ is non-vanishing.
\end{example}

\begin{example}[Radon transform]\label{remark: hyp rotcurv}
As another special case of Example \ref{remark:RotCurv}, consider the \textit{Radon transform} $A_t$ which is the averaging operator with defining function $\Phi_t(x;y)=\langle x , y \rangle -t$ for some $t\neq 0$. Observe that $\mathrm{Rot Curv}(\Phi_t)(x;y)=-\langle x, y \rangle$, so that the rotational curvature condition is satisfied. However, in contrast with Example \ref{example: sph rotcurv}, each hyperplane $S_{x,t} = \{y \in \R^n : \Phi_{t}(x;y) = 0\}$ has zero Gaussian curvature. In this case, the rotational curvature is capturing the rotation of the planes $S_{x,t}$ as $x$ varies, rather than curvature of the planes themselves.
\end{example}

\subsubsection*{Local smoothing estimates and cinematic curvature condition} Fix $Y$ and $Z$ smooth manifolds of dimension $n$ and $n+1$ respectively, with $n \geq 2$, and let $\mathcal{C}$ be a canonical relation in $T^*Z \backslash 0 \times T^*Y \backslash 0$. Thus, $\mathcal{C}$ is a conic submanifold of dimension $2n+1$ which is Lagrangian with respect to the 1-form $\omega_Z-\omega_Y= \sum_{j=1}^{n+1} \zeta_j \ud z_j - \sum_{i=1}^{n} \eta_i \ud y_i$. The local smoothing estimates in Theorem \ref{thm:LS} hold for global FIOs $\mathcal{F} \in I^{\mu-1/4}(Z,Y;\mathcal{C})$ which satisfy certain conditions on $\mathcal{C}$. 

Note, in contrast with the fixed-time estimates described above, here one works with operators of order $\mu - 1/4$ so that the FIO in that class admit the local expression \eqref{FIO def n to n+1} with a symbol $a \in S^{\mu}(\R^{n+1} \times \R^n)$ of order $\mu$. This is a quirk of the order convention from \S\ref{global section}. Indeed, if we consider the local operator \eqref{FIO def n to n+1}, which is interpreted as mapping functions of $n$ variables to functions of $n+1$ variables, the number $d$ of $(x,t,y)$ variables is equal to $2n + 1$ whilst the number $N$ of Fourier variables is $n$. Thus, recalling \eqref{relationship between number of variables}, we see the order $m$ of the operator \eqref{FIO def n to n+1} is indeed $\mu - 1/4$.\footnote{If $\mathcal{F}$ is viewed as a 1-parameter family of operators $(\mathcal{F}_t)_{t \in I}$, then each $\mathcal{F}_t$ is a FIO of order $\mu$.}

We now turn to describing the hypotheses on the canonical relation $\mathcal{C}$ which generalise properties H1) and H2) from the local theory. The first hypothesis corresponds to the mixed hessian condition H1) and is the natural analogue of the projection condition featured in Theorem \ref{global fixed-time theorem} (see \eqref{equivalent projection condition}). 

\begin{projection condition} If $\Pi_{T^*Y} \colon \mathcal{C} \to T^*Y \setminus 0$ denotes the natural projection mapping, then
\begin{equation}\label{nondeg 1}
    \mathrm{rank}\,\ud \Pi_{T^*Y} = 2n . 
\end{equation}
\end{projection condition}

\begin{equation*}
\begin{tikzcd}
& \mathcal{C} \arrow[ld, swap, "\Pi_{T^*_{_{}}Y}"]  \arrow[d, "\Pi_{Z}"] \arrow[rd, "\Pi_{T^*_{z_0}Z}"] &   \\
T^*Y\setminus 0 & Z & T^*_{z_0}Z \setminus 0
\end{tikzcd}
\end{equation*}

 Geometrically, this condition has the following interpretation. Fix $z_0 \in \Pi_Z(\mathcal{C})$ and let $\Pi_{T_{z_0}^* Z}$ denote the projection $\mathcal{C} \to T_{z_0}^* Z \backslash 0$. Define
\begin{equation*}
\Gamma_{z_0}:=\Pi_{T^*_{z_0}Z}(\mathcal{C}),
\end{equation*}
which is a conic subset of $T^*_{z_0} Z \backslash 0$. The projection condition implies that $\Gamma_{z_0}$ is in fact a \emph{smooth $n$-dimensional surface}. Indeed, this is a consequence of the following lemma.

\begin{lemma}\label{projection condition lemma} The condition \eqref{nondeg 1} implies that $\ud \Pi_{T_{z_0}^*Z}$ has constant rank $n$.
\end{lemma}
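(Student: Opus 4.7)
The strategy is to reduce the statement to a direct computation in local coordinates adapted to a phase function. Working near a given point of $\mathcal{C}$ and appealing to the local representation machinery of \S\ref{global section}, one obtains a smooth phase $\phi(z;\xi)$, homogeneous of degree $1$ in $\xi \in \R^n \setminus 0$, such that $\mathcal{C}$ is the image of the local parametrisation
\[
\Psi\colon (z,\xi) \in Z \times (\R^n \setminus 0) \longmapsto \bigl(z, \partial_z\phi(z;\xi), \partial_\xi\phi(z;\xi), \xi\bigr).
\]
In these coordinates $\Pi_Z \circ \Psi(z,\xi) = z$ is manifestly a submersion, so the fibre $\mathcal{C}_{z_0} := \mathcal{C} \cap \Pi_Z^{-1}(z_0)$ is the $n$-dimensional submanifold $\{z_0\} \times (\R^n \setminus 0)$, on which $\xi$ serves as a global coordinate.

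Next, I would translate the projection condition \eqref{nondeg 1} into a statement about $\phi$. Since $\Pi_{T^*Y} \circ \Psi(z,\xi) = (\partial_\xi\phi(z;\xi),\xi)$, its Jacobian has the block form
\[
\begin{pmatrix} \partial^2_{z\xi}\phi & \partial^2_{\xi\xi}\phi \\ 0 & I_n \end{pmatrix},
\]
and elementary row operations show that its rank equals $n + \mathrm{rank}\,\partial^2_{z\xi}\phi(z;\xi)$. Consequently, \eqref{nondeg 1} is equivalent to the pointwise condition $\mathrm{rank}\,\partial^2_{z\xi}\phi(z;\xi) = n$, that is, to the local mixed Hessian hypothesis H1.

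With this dictionary in place the conclusion is immediate. Under the parametrisation by $\xi$ on $\mathcal{C}_{z_0}$, the map $\Pi_{T^*_{z_0}Z}$ reads $\xi \mapsto \partial_z\phi(z_0;\xi) \in T^*_{z_0}Z$, whose differential is the $(n+1)\times n$ matrix $\partial^2_{\xi z}\phi(z_0;\xi)$. Since this is the transpose of $\partial^2_{z\xi}\phi(z_0;\xi)$, it has the same rank, namely $n$ by the first step. Hence $\ud\Pi_{T^*_{z_0}Z}$ has constant rank $n$ on $\mathcal{C}_{z_0}$.

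The only delicate point is securing the local parametrisation $\Psi$ of the stated form: the canonical-graph construction in \S\ref{global section} is written for the square case $\dim X = \dim Y$, and in the present asymmetric setting $\dim Z = n+1$, $\dim Y = n$ one uses the fact that $\Pi_{T^*Y}$ is a submersion to select the complementary direction along which to prescribe the fibre variable $\xi$. Granted this reduction, the remainder is just the block-matrix rank calculation together with the fact that transposition preserves rank.
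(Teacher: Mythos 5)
Your proof is correct and follows essentially the same route as the paper's: both pass to the local canonical-graph parametrisation $(z,\eta)\mapsto(z,\partial_z\phi(z;\eta),\partial_\eta\phi(z;\eta),-\eta)$, identify the projection condition with $\rank\partial^2_{z\eta}\phi=n$ via the block-matrix rank count for $\ud\Pi_{T^*Y}$, and conclude by observing that $\ud\Pi_{T^*_{z_0}Z}$ restricted to the fibre over $z_0$ is the transposed Hessian $\partial^2_{\eta z}\phi(z_0;\eta)$. You spell out the block computation that the paper compresses to a single line, and you are right to flag that \S\ref{global section} states the canonical-graph reduction only for $\dim X=\dim Y$, a point the paper also glosses over when invoking it for $\dim Z=n+1>\dim Y=n$.
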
 

\begin{proof} Recall from \S\ref{global section} that the operator can be expressed locally in the form \eqref{FIO def again}, in which case the phase function $\varphi$ appearing in the expression \eqref{local canonical relation} is given by $\varphi(x,y; \xi) := \phi(x; \xi) - \langle y, \xi \rangle$. In particular, local coordinates may be chosen so that $\mathcal{C}$ is locally parametrised as a graph (modulo a reflection)
\begin{equation}\label{local form of canonical relation}
(z, \eta) \mapsto (z, \partial_z \phi(z;\eta), \partial_{\eta} \phi(z;\eta) ,-\eta),
\end{equation}
where $\phi$ is homogeneous in $\eta$. Thus, computing the differential of $\Pi_{T^*Y}$ in these coordinates, the condition \eqref{nondeg 1} implies that the map $(z,\eta) \mapsto (\partial_{\eta} \phi(z;\eta), \eta )$ is a submersion; this of course reduces to 
\begin{equation}\label{full rank}
    \rank \partial_{z\eta}^2 \phi(z,\eta) = n.
\end{equation}
By combining \eqref{local form of canonical relation} and \eqref{full rank}, it immediately follows that the differentials of $\Pi_{T_{z_0}^*Z}$ must have rank $n$, as required. 
\end{proof}

The second condition concerns the curvature of the cones $\Gamma_{z_0}$.

\begin{cone condition} For every $z_0 \in \Pi_{Z}(\mathcal{C})$ the cone $\Gamma_{z_0}$ has $n-1$ non-vanishing principal curvatures at every point.
\end{cone condition}

 If $\mathcal{C}$ satisfies both the projection and the cone condition, then, following \cite{Sogge1991}, it is said to satisfy the \textit{cinematic curvature condition}.

\begin{theorem}[\cite{BHS}]\label{global local smoothing theorem}
Suppose $\mathcal{F} \in I^{\mu-1/4}(Y,Z; \mathcal{C})$ is a global FIO where $\mathcal{C} \subset T^*Y\setminus 0 \times T^*Z \setminus 0$ satisfies the cinematic curvature condition. If $ \frac{2(n+1)}{n-1} \leq p < \infty$, then
\begin{equation*}
\Big( \int_1^2 \| \mathcal{F} f(\,\cdot\,,t) \|_{L^p_{-\mu - \bar{s}_p + \sigma,\,\mathrm{loc}}(\R^{n})}^p \, \ud t \Big)^{1/p} \lesssim \| f \|_{L^p_{\mathrm{comp}}(\R^n)}
\end{equation*}
holds for all $\sigma < 1/p$.
\end{theorem}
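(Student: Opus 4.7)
The plan is to deduce the global statement from its local counterpart, Theorem \ref{thm:LS}, via a microlocal reduction. Since the norms appearing in the estimate are already of local (respectively compactly supported) type, the argument is essentially local in nature: it suffices to produce a suitable covering of $\Pi_Y(\mathcal{C}) \cap \mathrm{supp}\,f$ and $\Pi_Z(\mathcal{C}) \cap \{z : \|\mathcal{F}f\|\text{ is measured}\}$ by charts in which $\mathcal{F}$ admits a concrete representation of the form \eqref{FIO def n to n+1}.

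First, I would fix compact sets $\Omega_1 \subseteq Y$ and $\Omega_2 \subseteq Z$ and choose coordinate charts covering them. Using a smooth partition of unity subordinate to this cover, together with a conic partition of unity in the fibre direction, I would decompose the operator into finitely many pieces $\mathcal{F}_j$, each having kernel microlocally supported in a small $\theta$-conic neighbourhood of some point $\gamma_0 = (z_0,\zeta_0,y_0,\eta_0) \in \mathcal{C}$. The contribution away from $\mathcal{C}$ is a smoothing operator and causes no difficulty, so I may restrict attention to such microlocalised pieces. For each such $\gamma_0$, the projection condition \eqref{nondeg 1} and Lemma \ref{projection condition lemma} together guarantee that, in local coordinates adapted to $\gamma_0$, the canonical relation $\mathcal{C}$ is a canonical graph. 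As discussed at the end of \S\ref{global section}, the basic result of symplectic geometry then yields a generating function $\phi(z;\eta)$, homogeneous of degree $1$ in $\eta$, such that $\mathcal{C}$ is locally parametrised by \eqref{local form of canonical relation}, and the equivalence of phase theorem (Theorem \ref{thm:inv phases}) allows $\mathcal{F}_j$ to be written in the form \eqref{FIO def n to n+1}. The order bookkeeping is the one highlighted just before the cone condition: with $d = 2n+1$ space-time-base variables and $N = n$ Fourier variables, the global order $\mu - 1/4$ corresponds precisely to a local symbol $a \in S^\mu(\R^{n+1}\times \R^n)$.

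Next I would verify that the hypotheses H1) and H2) hold for this local representation. The mixed Hessian condition H1) is exactly \eqref{full rank}, which was derived from \eqref{nondeg 1} in the proof of Lemma \ref{projection condition lemma}. For the curvature condition H2), the set $\Gamma_{z_0}$ in \eqref{cones in terms of phase} is precisely the image of the projection $\Pi_{T^*_{z_0}Z}$ appearing in the cone condition; the cone condition, stipulating $n-1$ non-vanishing principal curvatures of $\Gamma_{z_0}$, is tautologically the same geometric statement as H2). Thus Theorem \ref{thm:LS} applies directly to each $\mathcal{F}_j$ and yields the bound
\begin{equation*}
\Big( \int_1^2 \|\mathcal{F}_j f(\,\cdot\,,t)\|_{L^p_{-\mu - \bar{s}_p + \sigma}(\R^n)}^p \,\ud t \Big)^{1/p} \lesssim \|f\|_{L^p(\R^n)}
\end{equation*}
for all $\sigma < 1/p$ and $\frac{2(n+1)}{n-1} \leq p < \infty$. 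Summing over the finitely many pieces and transferring the estimate back to $Y$ and $Z$ using the charts and the compactness of $\Omega_1$, $\Omega_2$, I obtain the claimed bound.

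The main obstacle I anticipate is not any single step but the careful bookkeeping needed to ensure that the microlocal cutoffs really do reduce the global problem to finitely many local FIOs of the type handled by Theorem \ref{thm:LS}, while preserving the symbol order. In particular, one must verify that the cutting and pasting produces only controlled error terms (smoothing operators, or FIOs associated to smaller canonical relations which may be handled by the same argument or by fixed-time bounds), and that the symplectic change of variables used to realise $\mathcal{C}$ as a canonical graph does not disturb the symbol class. All of this is standard in the FIO literature (see, for instance, \cite[Chapter 6]{Sogge2017}), but writing out the details correctly is the delicate part; once this reduction is in place, the substantive analytic content is entirely contained in the local Theorem \ref{thm:LS}.
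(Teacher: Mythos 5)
Your proposal is correct and takes essentially the same route as the paper: express $\mathcal{C}$ as a canonical graph in local coordinates via a generating function, check that the projection condition gives H1) (\eqref{full rank}) and the cone condition gives H2), mind the $-1/4$ order shift coming from $d=2n+1$, $N=n$ in \eqref{relationship between number of variables}, and invoke the local Theorem~\ref{thm:LS}. The paper compresses the reduction to a few sentences (pointing back to the proof of Theorem~\ref{global fixed-time theorem}), whereas you spell out the partition-of-unity/microlocalisation bookkeeping more explicitly, but the substance is identical.
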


Once again, it is not difficult to deduce Theorem \ref{global local smoothing theorem} as a direct consequence of its local counterpart, Theorem \ref{thm:LS}. Most of this argument has already been described in the proof of the first claim above. In particular, in local coordinates one may express $\mathcal{C}$ as a graph as in \eqref{local form of canonical relation}. The projection condition then implies \eqref{full rank}, which is precisely the condition H1) in the local theorem. On the other hand, the cones $\Gamma_{z_0}=\Pi_{T^*_{z_0}Z(\mathcal{C})}$ take the form \eqref{cones in terms of phase}, and so the cone condition clearly amounts to H2).



\begin{example}[Variable coefficient averaging operators]\label{remark:CinCurv} We return to the variable hypersurfaces $S_{x,t}$ and associated averaging operators $A_t$ discussed in Example \ref{remark:RotCurv}. Suppose that the defining function $\Phi_t$ satisfies the rotational curvature condition \eqref{RotCurv condition} for all $t$ in the $t$-support of $a$. Thus, $A_t \in I^{-(n-1)/2}(X,Y;\mathcal{C}_t)$ for a canonical relation $\mathcal{C}_t$ which is locally a canonical graph. Note that the rotational curvature condition applies to each $A_t$ individually and, in particular, does not take into account how the family of surfaces $S_{x,t}$ vary in $t$.

The cinematic curvature condition, on the other hand, provides additional information about the behaviour of the $S_{x,t}$ under changes of $t$. Indeed, let $\mathcal{C}$ denote the canonical relation associated to the \emph{family} of averages $A_t$ (viewed as an operator taking functions on $\R^n$ to functions on $\R^{n+1}$). It follows from the rotational curvature hypothesis that
\begin{equation*}
    \mathcal{C}=\{(x,t,\xi,\tau,y,\eta): (y,\eta)=\chi_t(x,\xi), \tau=q(x,t,\xi)\}
\end{equation*}
where:
\begin{itemize}
    \item $\chi_t$ is a symplectomorphism
    \item the function $q$ is homogeneous of degree 1 in $\xi$ and smooth if $\xi \neq 0$.
\end{itemize}
Indeed, the function $\chi_t$ arises from the canonical graph property, satisfied by each $\mathcal{C}_t$. Note that the variable $\tau$ may be written in terms of $x,t$ and $\xi$ because $\chi_t$ is a diffeomorphism and $\mathcal{C}$ is a $2n+1$ dimensional manifold. Moreover, $q$ is necessarily homogeneous of degree 1 in $\xi$ due to the conic nature of $\mathcal{C}$ in the $\eta$ variable. Having written the canonical relation in the above form, the cone condition requires that
\begin{equation*}
\rank \partial^2_{\xi \xi} q=n-1,
\end{equation*}
which is the maximum possible rank in view of the homogeneity of $q$. This additional hypothesis takes into account the change in $t$. 

Finally, if one represents the averaging operator using a single Fourier variable, as in Example \ref{alternative averaging operator example}, then it is possible to obtain a formula for computing the function $q$. Indeed, the phase function is given by $\varphi(x,t,y;\theta)=\theta \Phi_t(x;y)$ and so in $\mathcal{C}$ we have
\begin{equation*}
    \tau=\partial_t \varphi (x,t,y;\theta) = \theta \partial_t \Phi_t (x;y) \quad \textrm{and} \quad \xi=\partial_x \varphi(x,t,y;\theta)= \theta \partial_x \Phi_t(x;y).
\end{equation*}
The condition $\tau=q(x,t,\xi)$ therefore becomes 
\begin{equation*}
q(x, t, \partial_x \Phi_t(x;y))=\partial_t \Phi_t(x;y) \qquad \text{whenever} \quad \Phi_t(x;y)=0,
\end{equation*}
due the homogeneity of $q$ in the $\xi$ variable.

%
\end{example}

\begin{example}[Spherical averages]
For $t > 0$ let $A_t$ denote the averaging operator associated to the defining function $\Phi_t(x;y)=\frac{|x-y|^2}{t^2}-1$. It was observed in Example \ref{example: sph rotcurv} that each $A_t$ satisfies the rotational curvature condition. Moreover, the family of operators satisfies the cinematic curvature condition, since $q(x,t;\xi)=-|\xi|$.
\end{example}

\begin{example}[Radon transform]\label{Radon revisited}
For $t \neq 0$ let $A_t$ denote the averaging operator associated to the defining function $\Phi_t(x;y)=\langle x , y \rangle -t$. It was observed in Example \ref{remark: hyp rotcurv} that each $A_t$ satisfies the rotational curvature condition. However, the cinematic curvature condition is violated, as there is no change in the curvatures of the $S_{x,t}$ as $t$ varies. In particular, $q(x,t;\xi)=-\frac{\langle x, \xi \rangle}{t}$, so that $\partial^2_{\xi \xi} q = 0$.

Incidentally, for $n=2$ this example can also be used to show the necessity of the cinematic curvature hypothesis for local smoothing (see, for instance, \cite[Chapter 6]{Sogge2017}). 

\end{example}




\section{Local smoothing and maximal estimates}\label{section: maximal estimates}

In the next two sections we investigate some of the many applications of local smoothing estimates to problems in harmonic analysis. Here we review connections with (maximal) Bochner--Riesz multipliers and circular maximal function theorems.




\subsection{Bochner--Riesz estimates}\label{subsec:BR}

Recall that the Bochner--Riesz multipliers of order $\delta > 0$ are defined by
\begin{equation*}
S_t^{\delta} f(x) := \frac{1}{(2\pi)^{n}}\int_{\hat{\R}^n} e^{i \langle x,  \xi \rangle} (1-|t\xi|)^\delta_+ \, \Hat f(\xi) \, \ud\xi \qquad \textrm{for $t > 0$}.
\end{equation*}
A classical problem in harmonic analysis is to determine whether these multipliers constitute a Fourier summation method: in particular, one is interested in whether
\begin{equation*}
    S_t^{\delta} f \to f \quad \textrm{as} \quad t \to 0_+
\end{equation*}
for a given mode of convergence (typically convergence in $L^p$ or almost everywhere convergence). By a simple rescaling argument, together with some standard functional analysis, the $L^p$ convergence question is equivalent to determining the range of $L^p$ boundedness for the operators $S^{\delta} := S^{\delta}_1$ (see, for instance, \cite[Chapter IX]{Stein1993} for further details). 

\begin{conjecture}[Bochner--Riesz conjecture]\label{conjecture:BR}
Let $1 \leq p \leq \infty$. If $\delta>\delta(p) := \max \{n|\tfrac12-\tfrac 1p|-\tfrac12, 0\}$, then
\begin{equation}\label{1.1}
\| S^\delta f \|_{L^p(\R^n)} \lesssim \| f \|_{L^p(\R^n)}.
\end{equation}
\end{conjecture}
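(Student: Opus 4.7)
Conjecture~\ref{conjecture:BR} is a well-known open problem for $n\ge 3$ (the planar case having been resolved by Carleson--Sj\"olin), so I can only offer a \emph{conditional} plan. The natural route, given the theme of the paper, is to derive Conjecture~\ref{conjecture:BR} from the local smoothing conjecture for $e^{it\sqrt{-\Delta}}$ (Conjecture~\ref{LS conj euclidean}); this implication goes back to the third author in the early 1990s.

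The plan is to realise $S^\delta$ as a superposition of half-wave propagators localised at various scales, and then to control each scale via local smoothing. First, decompose the multiplier dyadically near the boundary $|\xi|=1$: fix $\eta\in C_c^\infty((1/2,2))$ and write
\begin{equation*}
(1-|\xi|)_+^\delta \;=\; \chi_0(\xi) \,+\, \sum_{j\ge 1} m_j(\xi), \qquad m_j(\xi):=\eta\bigl(2^j(1-|\xi|)\bigr)\,(1-|\xi|)_+^\delta,
\end{equation*}
where $\chi_0$ is smooth and compactly supported (hence defines a multiplier bounded on every $L^p$), and each $m_j$ is concentrated on the shell $1-|\xi|\sim 2^{-j}$ with size $\sim 2^{-j\delta}$ there; a further Littlewood--Paley step reduces matters to $f$ spectrally localised near $|\xi|\sim 1$.

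Next, the parabolic rescaling $\xi\mapsto 2^{-j}\xi$ converts the task of bounding $T_{m_j}$ on $L^p$ into one of bounding the Bochner--Riesz-type operator associated with the fat annulus $||\xi|-2^j|\lesssim 1$ at frequency $\sim 2^j$; this operator may in turn be realised, via spectral calculus, as a $t$-average
\begin{equation*}
\int_{\R} a(t)\, e^{it\sqrt{-\Delta}}g(x)\,\ud t, \qquad\text{$a$ a bump function on $[1,2]$, $g$ frequency-localised at scale $2^j$.}
\end{equation*}
Conjecture~\ref{LS conj euclidean} controls such an average in $L^p(\R^n\times[1,2])$ with a gain of $2^{-j/p+\varepsilon}$ over the trivial bound coming from the fixed-time estimate \eqref{fixed time FIO}. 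Tracking the rescaling factors, each piece is then bounded by $\|T_{m_j}f\|_{L^p}\lesssim 2^{-j(\delta-\delta(p))}\|f\|_{L^p}$, and summing over $j\ge 1$ converges precisely when $\delta>\delta(p)$, which yields \eqref{1.1}.

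\textbf{The main obstacle.} The reduction above is only conditional: Conjecture~\ref{LS conj euclidean} is itself open in every dimension for $p\ne 2$. The current best bounds (Theorem~\ref{thm:LS}) only cover the range $p\ge 2(n+1)/(n-1)$, falling short of the full $p\ge 2n/(n-1)$ required to deduce all of Conjecture~\ref{conjecture:BR}. Moreover, the $1/p$ gain of local smoothing is exactly balanced against the rescaling loss in the scheme above, so the implication is essentially tight and no slack remains. Both conjectures sit on top of the same Kakeya/Nikodym-type geometric obstructions discussed elsewhere in the paper, and an unconditional resolution in dimensions $n\ge 3$ appears to require substantial new ideas either in the local smoothing theory or in restriction/extension methods for the cone.
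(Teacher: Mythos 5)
The statement you were asked about is a \emph{conjecture} — the paper does not prove it and could not, as it remains open in all dimensions $n \geq 3$ (and the paper says so explicitly). You correctly identified this, and the conditional reduction you sketch — decompose $(1-|\xi|)_+^\delta$ dyadically in $1-|\xi|$, rescale each piece to a unit-width annulus at frequency $\sim \lambda$, realise it as a compactly supported $t$-average of $e^{it\sqrt{-\Delta}}$ via Fourier inversion of the bump $\psi$, and invoke $1/p-$ local smoothing to gain $\lambda^{-1/p+\varepsilon}$ — is precisely the content and method of the paper's Proposition~\ref{prop:LS implies BR}. Your accounting of exponents is also essentially right (modulo the harmless $\varepsilon$-loss: each dyadic piece should be bounded by $2^{-j(\delta-\delta(p)-\varepsilon)}$, which still sums for $\delta>\delta(p)$ since $\varepsilon$ is arbitrary). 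Your closing caveat — that Theorem~\ref{thm:LS} only covers $p\ge \tfrac{2(n+1)}{n-1}$, short of the required $\tfrac{2n}{n-1}$, and that the implication carries no slack — accurately reflects the state of the art as presented in the paper.
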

It is known that $\delta>\delta(p)$ is a necessary condition for \eqref{1.1} to hold whenever $p\ne 2$.  The results for $p=\infty$ are trivial and it is also well known that one would obtain this conjecture for all $1\le p\le\infty$ by interpolation and duality if the bounds held for $p\ge \frac{2n}{n-1}$.

It was observed by the third author \cite{Sogge1991} that the local smoothing conjecture for $e^{i t \sqrt{-\Delta}}$ formally implies Conjecture \ref{conjecture:BR}.

\begin{proposition}\label{prop:LS implies BR}
Let $ \frac{2n}{n-1} \leq p < \infty$ be given. If there is $1/p-$ local smoothing for $e^{i t \sqrt{-\Delta}}$, then the Bochner--Riesz estimate \eqref{1.1} holds for all $\delta > \delta(p)$.
\end{proposition}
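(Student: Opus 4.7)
The plan is to follow the classical Sogge approach: dyadically decompose the Bochner--Riesz multiplier near $|\xi| = 1$, write each piece as a time average of the half-wave propagator $e^{-is\sqrt{-\Delta}}$, and then exploit local smoothing. Concretely, fix a bump $\chi \in C_c^\infty((1/2,2))$ with $\sum_{k \geq 1} \chi(2^k u) = 1$ for $0 < u < 1/2$ and set
\begin{equation*}
    m_k^\delta(\xi) := \chi(2^k(1 - |\xi|)) (1 - |\xi|)_+^\delta = 2^{-k\delta}\, \Psi\bigl(2^k(1 - |\xi|)\bigr),
\end{equation*}
where $\Psi(u) := \chi(u)u^\delta$ is smooth, compactly supported, and independent of $k$. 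The remaining smooth piece (where $(1 - |\xi|)_+$ is bounded away from $0$) is a benign pseudo-differential operator bounded on every $L^p$, so it suffices to estimate $\sum_{k \geq 1} m_k^\delta(D) f$. By standard reductions we may assume $\hat{f}$ is supported in $\{1/2 \leq |\xi| \leq 2\}$, so that $\|f\|_{L^p_s} \sim \|f\|_{L^p}$ for every $s$.

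Now apply Fourier inversion to $\Psi$ to obtain the integral representation
\begin{equation*}
    m_k^\delta(D) f(x) = c\, 2^{-k(\delta+1)} \int_\R \hat\Psi(s/2^k)\, e^{is}\, e^{-is\sqrt{-\Delta}} f(x)\, \ud s,
\end{equation*}
in which the rapid decay of $\hat\Psi$ confines the effective range to $|s| \lesssim 2^k$. Split the $s$-integral dyadically, $|s| \sim 2^j$ with $0 \leq j \leq k$; for each such piece, Minkowski's inequality followed by H\"older in $s$ gives
\begin{equation*}
    \|m_{k,j}^\delta(D) f\|_{L^p_x} \lesssim 2^{-k(\delta+1)}\, 2^{j/p'} \Bigl(\int_{|s| \sim 2^j} \|e^{-is\sqrt{-\Delta}} f\|_{L^p_x}^p\, \ud s\Bigr)^{1/p}.
\end{equation*}
The key step is to evaluate the time integral via a rescaling that converts it into a unit-time local smoothing inequality for frequency $\sim 2^j$ data: setting $g(y) := f(2^j y)$, so $\hat g$ lives at scale $2^j$, the substitution $s = 2^j \tau$ yields
\begin{equation*}
    \int_{|s| \sim 2^j} \|e^{-is\sqrt{-\Delta}} f\|_{L^p}^p\, \ud s \sim 2^{j(n+1)} \int_1^2 \|e^{-i\tau\sqrt{-\Delta}} g\|_{L^p}^p\, \ud \tau \cdot 2^{-jn} = 2^j \int_1^2 \|e^{-i\tau\sqrt{-\Delta}} g\|_{L^p}^p\, \ud \tau \cdot (\cdots).
\end{equation*}
The hypothesis of $1/p-$ local smoothing applied to $g$ (at frequency $\sim 2^j$) then produces the bound
\begin{equation*}
    \Bigl(\int_{|s| \sim 2^j} \|e^{-is\sqrt{-\Delta}} f\|_{L^p}^p\, \ud s\Bigr)^{1/p} \lesssim 2^{j(\bar s_p + \varepsilon)} \|f\|_{L^p}
\end{equation*}
for any $\varepsilon > 0$, which improves by a factor of $2^{-j/p}$ over what pointwise-in-time fixed-time estimates would yield after integration.

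Combining, $\|m_{k,j}^\delta(D) f\|_{L^p} \lesssim 2^{-k(\delta+1)}\, 2^{j/p' + j(\bar s_p + \varepsilon)} \|f\|_{L^p}$, and since $1/p' + \bar s_p > 0$ the sum over $0 \leq j \leq k$ is dominated by the $j = k$ term, giving
\begin{equation*}
    \|m_k^\delta(D) f\|_{L^p} \lesssim 2^{k(\bar s_p - 1/p - \delta + \varepsilon)} \|f\|_{L^p}.
\end{equation*}
Summing over $k$ converges precisely when $\delta > \bar s_p - 1/p = n(\tfrac12 - \tfrac 1p) - \tfrac 12 = \delta(p)$, which is exactly the conjectural Bochner--Riesz threshold. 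I expect the only substantive step is the rescaling argument: one must track carefully how the gain of $1/p$ derivatives on unit-time local smoothing at frequency $2^j$ translates into the gain of $2^{-j/p}$ in the time integral over $|s| \sim 2^j$ at unit frequency. This rescaling is what converts the $\bar s_p$ threshold dictated by fixed-time bounds into the sharper $\bar s_p - 1/p$ threshold; the rest of the argument is essentially bookkeeping of the dyadic sum.
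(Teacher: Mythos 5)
Your argument is correct and follows essentially the same route as the paper's: decompose $(1-|\xi|)_+^\delta$ dyadically near $|\xi|=1$, write each dyadic piece as a Fourier integral in time of the wave propagator, and feed in the hypothesised local smoothing estimate after a rescaling. The difference is purely organizational. The paper first rescales frequency once so that $\hat f$ lives at scale $\lambda=2^k$ and the time variable effectively ranges over $[-1,1]$ (with exponentially decaying tails from $|t|\gtrsim 1$), and then invokes local smoothing on a single unit time window; you instead keep $\hat f$ at unit frequency, split the $s$-integral into dyadic blocks $|s|\sim 2^j$ with $j\lesssim k$, and rescale each block to a unit time window with data at frequency $2^j$. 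Both schemes use the same dilation covariance of $e^{it\sqrt{-\Delta}}$ and land on the threshold $\delta>\bar s_p-1/p=\delta(p)$; yours makes explicit the change-of-variables step that the paper compresses into a remark, at the cost of a dyadic sum in $j$ that you correctly observe is geometric because $1/p'+\bar s_p>0$. One caveat on presentation: the display ending with ``$\cdot\,2^{-jn}=2^j\cdots\cdot(\cdots)$'' is garbled. After $g(y):=f(2^jy)$ and $s=2^j\tau$ the clean identity is
\begin{equation*}
\int_{|s|\sim 2^j}\|e^{-is\sqrt{-\Delta}}f\|_{L^p}^p\,\ud s
\;=\;2^{j(n+1)}\int_1^2\|e^{-i\tau\sqrt{-\Delta}}g\|_{L^p}^p\,\ud\tau,
\end{equation*}
and combining this with $\|g\|_{L^p}=2^{-jn/p}\|f\|_{L^p}$ and the fact that $\hat g$ lives at frequency $\sim 2^j$ yields exactly the bound $(\int_{|s|\sim 2^j}\|e^{-is\sqrt{-\Delta}}f\|_{L^p}^p\,\ud s)^{1/p}\lesssim 2^{j(\bar s_p+\varepsilon)}\|f\|_{L^p}$ that you state afterward, so the conclusion is right even though the intermediate line is not. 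You should also record, if only in a phrase, that the $|s|\gtrsim 2^k$ contribution is negligible because $\hat\Psi(s/2^k)$ decays faster than any power of $2^{k-j}$, which dominates the polynomial growth $2^{j(1/p'+\bar s_p)}$.
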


It is remarked that the Bochner--Riesz conjecture is itself known to imply the Fourier restriction conjecture for spheres and paraboloids, which in turn implies the Kakeya conjecture: see \cite{Tao1999, Wolff1999} for a discussion of these problems and the relationships between them. Thus, we see that the local smoothing conjecture sits at the top of a chain of implications relating important central questions in harmonic analysis and geometric measure theory.
\begin{equation*}
    \textrm{Local smoothing} \Rightarrow \textrm{Bochner--Riesz} \Rightarrow \textrm{Restriction} \Rightarrow \textrm{Kakeya}.
\end{equation*}

\begin{proof}[Proof (of Proposition \ref{prop:LS implies BR})] Note that
\begin{equation}\label{1.2}
\bar s_p-1/p=\delta(p) \quad \text{if } \quad p\ge \frac{2n}{n-1}
\end{equation}
and that one may write
\begin{equation}\label{1.22}
(1-|\xi|)^\delta_+ = r(|\xi|) +\sum_{k=1}^\infty 2^{-k\delta} \psi\bigl(2^k(1-|\xi|)\bigr),\end{equation}
where $r=r_\delta \in C^\infty_0([0,\infty))$ and $\psi=\psi_\delta\in C^\infty_0([1/2,2])$.

Since $r$ is smooth and compactly supported the Fourier multiplier operator associated with $r(|\xi|)$ is bounded on $L^p(\Rn)$ for all $1\le p\le \infty$, and one concludes that \eqref{1.1} would follow for a given $p\ge \frac{2n}{n-1}$ if the inequality
\begin{equation*}
    \Big\|\int_{\hat{\R}^n} e^{i \langle x,  \xi \rangle} \psi\bigl(2^k(1-|\xi|)\bigr) \, \Hat f(\xi) \, \ud\xi\Big\|_{L^p(\R^n)} \lesssim_{\varepsilon} 2^{k(\delta(p)+\e)} \, \|f\|_{L^p(\R^n)} 
\end{equation*}
holds for all $k \in \N$ and all $\varepsilon > 0$. By a simple change of variables argument, the inequality in the above display holds if and only if
\begin{equation}\label{1.3}
\| A^{\lambda}f \|_{L^p(\R^n)} \lesssim_{\varepsilon} \la^{\delta(p)+\e}\, \|f\|_{L^p(\R^n)} \qquad \textrm{for all $\la \gg 1$}
\end{equation}
where
\begin{equation*}
A^{\lambda}f(x) := \int_{\hat{\R}^n} e^{i \langle x, \xi \rangle} \,  \psi(\la-|\xi|) \, \Hat f(\xi) \, \ud\xi.
\end{equation*}
In proving \eqref{1.3} for a given $\la \gg 1$, since $\text{supp}\, \psi\in [1/2,2]$, one may assume that
\begin{equation}\label{1.4}
\text{supp}\, \Hat f\subset \{\xi: \, |\xi|\in [\la/2,2\la] \, \}.
\end{equation}
Also, if one writes
$$\psi(\la-|\xi|) = (2\pi)^{-1}\int_{\R} \check{\psi}(t) e^{-i\la t} \, e^{it|\xi|} \, \ud t,$$
then H\"older's inequality in the $t$-variable after multiplying and dividing by $(1+|t|)$ implies that
\begin{align}
\label{1.5}
\| A^{\lambda}f \|_{L^p(\R^n)} &\lesssim \Bigl\| (1+|t|) \check{\psi}(t)  \int_{\hat{\R}^n} e^{i \langle x, \xi \rangle} e^{it|\xi|} \, \Hat f(\xi)\, \ud\xi \, \Bigr\|_{L^p(\Rn\times \R)} \\
\nonumber
&\lesssim_N \bigl\| (1 + |t|)^{-N} e^{it\sqrt{-\Delta}}f\, \bigr\|_{L^p(\Rn \times \R)}.
\end{align}
for all $N \in \N$ and one may write
\begin{equation}\label{eq:split}
    \| A^{\lambda}f \|_{L^p(\R^n)} \lesssim_N \bigl\| e^{it\sqrt{-\Delta}}f\, \bigr\|_{L^p(\Rn \times [-1,1])} + \sum_{k \in \N} \bigl\| (1 + |t|)^{-N} e^{it\sqrt{-\Delta}}f\, \bigr\|_{L^p(\Rn \times I_k)},
\end{equation}
where $I_k:=[-2^{k-1}, -2^{k}] \cup [2^{k-1}, 2^k]$. In view of \eqref{1.4} and \eqref{1.2}, $1/p-$ local smoothing for $e^{it\sqrt{-\Delta}}$ implies that
\begin{equation}\label{1.6}
\bigl\|e^{it\sqrt{-\Delta}} f\bigr\|_{L^p( \Rn\times [-1,1])} \lesssim_\e \la^{\delta(p)+\e} \, \|f\|_{L^p(\Rn)}.
\end{equation}
Thus, the first term in the right-hand-side of \eqref{eq:split} is controlled by the right-hand side of \eqref{1.3}. For the remaining terms, the rapid decay in \eqref{1.5} together with \eqref{1.6} and a simple change of variables
argument yields
$$
\bigl\| (1 + |t|)^{-N} e^{it\sqrt{-\Delta}}f\, \bigr\|_{L^p(\Rn \times I_k)} \lesssim 
2^{-k}\la^{\delta(p)+\e}\|f\|_p
$$
uniformly in $k \in \N$, and then the desired result just follows from summing a geometric series in $k \in \N$.
\end{proof}




\subsection{Maximal Bochner--Riesz estimates}\label{subsec:MBR}

When studying almost everywhere convergence of the Bochner--Riesz summation method, one naturally considers the maximal estimates
\begin{equation}\label{l.4}
\Bigl(\, \int_{\R^n} \sup_{t>0} |S^\delta_tf(x)|^p \, \ud x \, \Bigr)^{1/p}\lesssim \|f\|_{L^p(\R^n)}
\end{equation}
for the operators $S_t^{\delta}$. It transpires that $1/p-$ local smoothing for $e^{i t \sqrt{-\Delta}}$ also implies inequalities of this form.

\begin{proposition}\label{maxboch}  Let $ \frac{2n}{n-1} \leq p < \infty$ be given. If there is $1/p-$ local smoothing
for $e^{it\sqrt{-\Delta}}$, then the maximal Bochner--Riesz estimate \eqref{l.4} holds for all $\delta>\delta(p)$.
\end{proposition}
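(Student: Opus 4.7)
The plan is to adapt the argument of Proposition \ref{prop:LS implies BR} by combining it with a Sobolev-type embedding in the time variable, so as to convert the maximal function $\sup_{t > 0}|S^\delta_t f|$ into a space-time quantity amenable to local smoothing. The argument proceeds in three main steps.

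First, I will reduce to a frequency-localised maximal estimate. Decomposing $(1-|\xi|)_+^\delta$ as in \eqref{1.22}, the smooth piece $r(|t\xi|)$ contributes a maximal operator pointwise dominated by the Hardy--Littlewood maximal function, and hence bounded on $L^p$ for $1 < p \leq \infty$. For each dyadic piece $\psi(\lambda(1-|t\xi|))$ with $\lambda = 2^k$, a rescaling argument based on the dilation identity $S^\delta_t f(x) = S^\delta_1(f(t\,\cdot\,))(x/t)$, together with a Littlewood--Paley decomposition of $f$ (exploiting that $S^\delta_t f_\mu$ vanishes for $t \gtrsim 1/\mu$), reduces matters to proving
\begin{equation*}
\Big\|\sup_{t \in [1,2]}|A^\lambda_t f|\Big\|_{L^p(\R^n)} \lesssim_\varepsilon \lambda^{\delta(p)+\varepsilon}\|f\|_{L^p(\R^n)}
\end{equation*}
for $\hat{f}$ supported in $\{|\xi| \in [\lambda/2, 2\lambda]\}$, where $A^\lambda_t f(x) := \int_{\hat{\R}^n} e^{i\langle x, \xi\rangle}\psi(\lambda - t|\xi|) \hat{f}(\xi)\,\ud\xi$. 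Summing these estimates over $k \geq 1$ against the weights $2^{-k\delta}$ would then yield \eqref{l.4} in the claimed range of $\delta$.

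Second, I will convert the supremum into a space-time quantity via the fundamental theorem of calculus: for any smooth $F \colon [1,2] \to \mathbb{C}$,
\begin{equation*}
\sup_{t \in [1,2]}|F(t)|^p \lesssim |F(1)|^p + \int_1^2 |F(t)|^{p-1}|\partial_t F(t)|\,\ud t.
\end{equation*}
Integrating in $x$ with $F(t) := A^\lambda_t f(x)$ and applying H\"older's inequality in $(x,t)$ gives
\begin{equation*}
\Big\|\sup_{t \in [1,2]}|A^\lambda_t f|\Big\|_{L^p_x}^p \lesssim \|A^\lambda_1 f\|_{L^p_x}^p + \|A^\lambda_t f\|_{L^p_{x,t}}^{p-1}\|\partial_t A^\lambda_t f\|_{L^p_{x,t}},
\end{equation*}
where the space-time norms are taken over $\R^n \times [1,2]$.

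Finally, I will control each right-hand-side term using $1/p-$ local smoothing. Via the Fourier inversion identity $\psi(\lambda - t|\xi|) = (2\pi)^{-1}\int \check\psi(\tau)e^{-i\lambda\tau}e^{it\tau|\xi|}\,\ud\tau$, the operator $A^\lambda_t f$ can be written as a weighted superposition of half-wave propagators $e^{it\tau\sqrt{-\Delta}}f$. Proceeding precisely as in the passage from \eqref{1.5} to \eqref{1.6}, the Schwartz decay of $\check\psi$ together with $1/p-$ local smoothing yields $\|A^\lambda_t f\|_{L^p_{x,t}([1,2])} \lesssim_\varepsilon \lambda^{\delta(p)+\varepsilon}\|f\|_{L^p}$, while $\|A^\lambda_1 f\|_{L^p_x}$ is controlled by the same quantity via Proposition \ref{prop:LS implies BR}. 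Because $\partial_t A^\lambda_t f$ has the same structural form as $A^\lambda_t f$ with $\psi$ replaced by $-|\xi|\psi'(\lambda - t|\xi|)$, introducing a harmless additional factor of $\lambda$ (as $|\xi| \sim \lambda$ on the support of the symbol), the same argument controls $\|\partial_t A^\lambda_t f\|_{L^p_{x,t}}$ as well.

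The main obstacle will be the careful tracking of exponents in this last step: the extra factor of $\lambda$ produced by differentiating $A^\lambda_t f$ in $t$ must be balanced exactly against the $1/p-$ gain from local smoothing, so that the weighted dyadic summation over $k$ converges in the full claimed range $\delta > \delta(p)$. This is the manifestation, at the maximal level, of the critical balance between the smoothing order $\sigma$ and the Lebesgue exponent $p$ that underlies the local smoothing conjecture itself.
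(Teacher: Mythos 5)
Your approach has a genuine gap that prevents it from reaching the critical Bochner--Riesz exponent. The issue lies in the use of the fundamental-theorem-of-calculus lemma (Lemma~\ref{sob} in the paper) to convert the supremum into a space-time quantity: this step is lossy by a full factor of $\lambda^{1/p}$, which exactly neutralises the local smoothing gain.

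Concretely, track the powers of $\lambda$. After Lemma~\ref{sob} and H\"older you must control
\begin{equation*}
\|A^\lambda_1 f\|_{L^p_x}^p \;+\; \|A^\lambda_t f\|_{L^p_{x,t}}^{p-1}\,\|\partial_t A^\lambda_t f\|_{L^p_{x,t}}.
\end{equation*}
Local smoothing of order $1/p-$ gives $\|A^\lambda_t f\|_{L^p_{x,t}} \lesssim_\varepsilon \lambda^{\bar s_p - 1/p + \varepsilon}\|f\|_p$, and since $\partial_t$ produces a factor $\sim|\xi|\sim\lambda$ on the support of the symbol, $\|\partial_t A^\lambda_t f\|_{L^p_{x,t}} \lesssim_\varepsilon \lambda^{\bar s_p - 1/p + 1 + \varepsilon}\|f\|_p$. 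The product term is thus $\lesssim \lambda^{p(\bar s_p - 1/p) + 1 + p\varepsilon}\|f\|_p^p$, i.e.\ the maximal quantity is $\lesssim \lambda^{\bar s_p + \varepsilon}\|f\|_p$. But what you need is $\lambda^{\delta(p)+\varepsilon} = \lambda^{\bar s_p - 1/p + \varepsilon}$, so you have lost $\lambda^{1/p}$: you have recovered precisely the \emph{fixed-time} regularity exponent and nothing better. The extra $\lambda$ from differentiating in $t$ is therefore not ``harmless'' --- the $1/p$ gain from local smoothing is entirely consumed in offsetting the Sobolev-embedding loss, leaving no margin for the dyadic sum over $k$ to converge at $\delta > \delta(p)$.

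This is exactly why the paper uses Lemma~\ref{sob} only for the circular maximal function (\S\ref{subsec:circular}), where the symbols of order $-1/2$ provide extra decay that absorbs the derivative cost and where any $\varepsilon_p > 0$ suffices (Remark~\ref{remark: not 1/p needed}). For the maximal Bochner--Riesz bound, where the numerology is tight, the paper instead linearises the supremum with a measurable function $t(x)$ and exploits the change of variables $s \mapsto s/t(x)$ in the Fourier inversion identity
\begin{equation*}
\psi\bigl(\la-|t(x)\xi|\bigr)=(2\pi t(x))^{-1}\int_{\R} \check{\psi}(s/t(x)) \, e^{-i\la s/t(x)} e^{is|\xi|} \, \ud s,
\end{equation*}
which pushes the $t(x)$-dependence entirely into a weight $\check{\psi}(s/t(x))/t(x)$ that is bounded uniformly (and rapidly decaying in $s$) for $t(x)\in[1,2]$. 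The kernel against $e^{is\sqrt{-\Delta}}f$ then carries no $t(x)$ and no extra factor of $\lambda$, so H\"older in $s$ followed by local smoothing gives the sharp $\lambda^{\delta(p)+\varepsilon}$ bound without any loss. You should replace your FTC/Sobolev step with this linearisation argument.
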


To prove this, note that if $r(|\xi|)$ is as in \eqref{1.22}, then
$$\sup_{t>0} 
\Bigl| \, \int_{\hat{\R}^n} e^{i \langle x, \xi \rangle} \, r(t|\xi|) \, \Hat f(\xi) \, \ud\xi\, \Bigr| \lesssim Mf (x),$$
where $M$ denotes the Hardy--Littlewood maximal function.  Since $\|Mf\|_p\lesssim \|f\|_p$ for all $p>1$, the previous arguments reveal that \eqref{l.4} would follow if one can show that the maximal version of \eqref{1.3} holds. Explicitly, it suffices to show that
\begin{equation}\label{l.5}
\Bigl(\, \int_{\R^n} \sup_{t>0} \, |A^\la_t f(x)|^p \, \ud x\, \Bigr)^{1/p} \lesssim_\e \la^{\delta(p)+\e}\|f\|_{L^p(\R^n)}
\end{equation}
holds for all $\lambda \gg 1$ and all $\varepsilon > 0$ where 
\begin{equation*}
A^\la_tf(x) := \frac{1}{(2\pi)^{n}}\int_{\hat{\R}^n} e^{i \langle x, \xi \rangle} \psi\bigl(\la-|t\xi|\bigr) \, \Hat f(\xi) \, \ud\xi.
\end{equation*}

To prove this, we appeal to the following simple lemma.

\begin{lemma}\label{maxdyad}
Suppose that
$$\mathrm{supp}\,m\subset \{\xi\in 
\hat{\R}^n: \, \,
|\xi|\in (\la_0/2,2\la_0)\, \}$$
for some fixed $\la_0>0$ and set
$$A_tf(x) := \frac{1}{(2\pi)^{n}}\int_{\hat{\R}^n} e^{i \langle x, \xi \rangle}
\, m(t\xi)\, \Hat f(\xi)\, \ud\xi, \, \, \, t>0.$$
If $2\le p<\infty$ and the inequality
\begin{equation}\label{l.1}
\Bigl(\, \int_{\R^n} \, \sup_{t\in [1,2]}
\, |A_tf(x)|^p \, \ud x\, \Bigr)^{1/p}\le \bar{C}_p
\|f\|_{L^p(\Rn)},
\end{equation}
holds for some fixed constant $\bar{C}_p >0$, then it follows that
\begin{equation*}
\Bigl(\, \int_{\R^n} \, \sup_{t>0} \, |A_tf(x)|^p\, \ud x \, \Bigr)^{1/p} \lesssim C_p \, \|f\|_{L^p(\Rn)}.
\end{equation*}
\end{lemma}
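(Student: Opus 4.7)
The proof has two independent ingredients: a scaling argument that transfers the hypothesis \eqref{l.1} uniformly to each dyadic interval $[2^k,2^{k+1}]$, and a Littlewood--Paley decomposition which exploits the frequency localisation of $m$ to sum the resulting dyadic contributions.

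The first step is a straightforward rescaling. For each $k \in \Z$, setting $g(y) := f(2^k y)$ and $s = 2^{-k}t$, a change of variables in the defining integral gives $A_t f(x) = A_s g(2^{-k} x)$. Taking the supremum over $t \in [2^k, 2^{k+1}]$ therefore corresponds to taking the supremum over $s \in [1,2]$, and the $L^p$-norm identity $\|g\|_{L^p} = 2^{-kn/p} \|f\|_{L^p}$ exactly compensates for the dilation of the $x$-variable. Invoking \eqref{l.1} applied to $g$ therefore yields
\begin{equation*}
\Big\| \sup_{t \in [2^k, 2^{k+1}]} |A_t f| \Big\|_{L^p(\R^n)} \leq \bar{C}_p \|f\|_{L^p(\R^n)} \qquad \textrm{uniformly in $k \in \Z$.}
\end{equation*}

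Next I would use the support hypothesis on $m$ to frequency-localise. Since $m(t\xi)$ vanishes unless $|\xi| \in (2^{-k-2}\lambda_0, 2^{-k+1}\lambda_0)$ for $t \in [2^k,2^{k+1}]$, one may fix a smooth Littlewood--Paley family of projections $\{P_k\}_{k \in \Z}$ adapted to a slightly inflated collection of annuli, so that $A_t f = A_t P_k f$ throughout each dyadic window. Then
\begin{equation*}
\sup_{t>0} |A_t f(x)|^p = \sup_{k \in \Z} \sup_{t \in [2^k,2^{k+1}]} |A_t P_k f(x)|^p \leq \sum_{k \in \Z} \sup_{t \in [2^k,2^{k+1}]} |A_t P_k f(x)|^p.
\end{equation*}
Integrating in $x$ and applying the uniform dyadic bound from the previous step to each $P_k f$ reduces matters to controlling $\sum_k \|P_k f\|_{L^p}^p$ by $\|f\|_{L^p}^p$. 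This is exactly where the hypothesis $p \geq 2$ enters: by the embedding $\ell^2 \hookrightarrow \ell^p$ followed by the Littlewood--Paley square function estimate, one has
\begin{equation*}
\Big(\sum_k \|P_k f\|_{L^p}^p\Big)^{1/p} \leq \Big\| \Big(\sum_k |P_k f|^2\Big)^{1/2} \Big\|_{L^p(\R^n)} \lesssim \|f\|_{L^p(\R^n)},
\end{equation*}
which completes the argument.

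The pivotal step is the $\ell^\infty \to \ell^p$ majorisation of the supremum over $k$, which converts the family of uniform dyadic bounds into a single global estimate. This is only viable because the frequency disjointness of the $P_k f$ allows one to recover the $\ell^p$-summation loss via Littlewood--Paley theory, a maneuver which is unavailable for $p<2$ and which explains the restriction $p\geq 2$ in the statement. The scaling step, by contrast, is elementary and does not interact with the value of $p$ at all.
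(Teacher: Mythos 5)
Your proposal is correct and follows essentially the same strategy as the paper: rescale to transfer \eqref{l.1} uniformly to each dyadic time-window, use the frequency localisation of $m$ to replace $f$ by a Littlewood--Paley piece on each window, and then sum via the embedding $\ell^2\hookrightarrow\ell^p$ (where $p\geq 2$ is used) followed by the square-function bound. The only cosmetic difference is that you attach to each dyadic window a single ``fat'' projection $P_k$ adapted to the $\sim 3$-octave frequency support, whereas the paper uses the standard Littlewood--Paley family $\{P_\ell\}$ and notes that only the $O(1)$ terms with $|\ell-(k+k_0)|\leq 10$ contribute on that window; both yield the same finitely-overlapping structure needed for the square-function estimate.
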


\begin{proof}
Let $k_0\in {\mathbb Z}$ be the unique integer such that $\la_0\in [2^{k_0},2^{k_0+1})$.
Next, choose a Littlewood--Paley bump function
$\beta\in C^\infty_0((1/2,2))$ satisfying
$\sum_{-\infty}^\infty \beta(2^{-j}r)=1$, $r>0$, and define
$P_\ell f$ by 
$(P_\ell f)\;\widehat{}\;(\xi) := \beta(2^{-\ell}|\xi|)\Hat f(\xi)$.
Then, by
Littlewood--Paley theory (see, for instance, \cite[Chapter IV]{Stein1970}),
\begin{equation}\label{l.3}
\bigl\| \, (\sum_{\ell=-\infty}^\infty |P_\ell f|^2)^{1/2}\,
\bigr\|_{L^p(\Rn)}\lesssim \|f\|_{L^p(\Rn)} \quad
\textrm{for all $1<p<\infty$.}
\end{equation}
To use this, note that
$$A_tf=A_t\bigl( \sum_{\{\ell \in \Z : \, |\ell-(k+k_0)|\le 10\, \}} P_\ell f
\bigr) \quad \text{if } \, \, t\in [2^{-k},2^{-k+1}],$$
since it is assumed that $m(\xi)=0$ if $|\xi|\notin (2^{k_0-2},2^{k_0+2})$. By this observation together with a scaling argument, the assumption \eqref{l.1} yields
\begin{align*}
\int_{\R^n} \sup_{t\in [2^{-k},2^{-k+1}]} |A_tf(x)|^p\, \ud x
&= \int_{\R^n} \sup_{t\in [2^{-k},2^{-k+1}]}
\bigl| A_t(\sum_{|\ell -(k+k_0)|\le 10}P_\ell f(x)) \bigr|^p\, \ud x
\\
&\le \bar{C}_p^p \int_{\R^n} \bigl| \, \sum_{|\ell-(k+k_0)|\le 10}P_\ell f(x)\, \bigr|^p\, \ud x \\
&\lesssim \bar{C}_p^p \sum_{|\ell-(k+k_0)|\le 10} \int_{\R^n} |P_\ell f(x)|^p \, \ud x
\end{align*}
for all $k \in \Z$. Consequently,
\begin{align*}
\int_{\R^n} \sup_{t>0} |A_tf(x)|^p \, \ud x
&\le \sum_{k=-\infty}^\infty \int_{\R^n} \sup_{t\in [2^{-k},2^{-k+1}]} |A_tf(x)|^p \, \ud x
\\
&\lesssim \bar{C}_p^p \sum_{k=-\infty}^\infty \sum_{|\ell-(k+k_0)|\le 10}
\int_{\R^n} |P_\ell f(x)|^p \, \ud x
\\
&\lesssim \bar{C}_p^p \int \bigl( \, \sum_\ell |P_\ell f(x)|^2\, \bigr)^{p/2} \, \ud x
\\
&\lesssim \bar{C}_p^p \|f\|_{L^p(\R^n)}^p.
\end{align*}
Here it was used that $p\ge 2$
yields $\ell^2 \subseteq \ell^p$ in the second to last
inequality and \eqref{l.3} in the last inequality.
\end{proof}

\begin{proof}[Proof (of Proposition \ref{maxboch})] The preceding observations together with the above lemma reduce the proof of \eqref{l.5} to showing that for $\la\gg1$ one has
$$\Bigl( \, \int_{\R^n} \sup_{1\le t\le 2}\bigl| A^\la_tf(x)\bigr|^p \, \ud x\, \Bigr)^{1/p} \lesssim_\e \la^{\delta(p)+\e}
\|f\|_{L^p(\R^n)}.$$
This in turn would follow by showing that 
\begin{equation}\label{l.6}
\Bigl(\, \int_{\R^n} \bigl| (A^\la_{t(x)}f)(x)\bigr|^p \, \ud x\, \Bigr)^{1/p} \lesssim_\e \la^{\delta(p)+\e}\|f\|_{L^p(\R^n)}
\end{equation}
holds for any measurable function $t(x): \, \Rn \to [1,2]$. 

To adapt the earlier argument, write
\begin{align*}
\psi\bigl(\la-|t(x)\xi|\bigr)&=(2\pi)^{-1}\int_{\R} \check{\psi}(s) \, e^{-i\la s} e^{is t(x)|\xi|} \, \ud s
\\
&=(2\pi t(x))^{-1}\int_{\R} \check{\psi}(s/t(x)) \, e^{-i\la s/t(x)} e^{is|\xi|} \, \ud s.
\end{align*}
Since it is assumed that $1\le t(x)\le 2$ and since $\check{\psi}$ is rapidly decreasing, one can use H\"older's inequality and argue as in \eqref{1.5}
to see that the left-hand-side of \eqref{l.6} is dominated by
$$\bigl\|(1+|s|)^{-N} e^{is\sqrt{-\Delta}}f\bigr\|_{L^p(\Rn \times \R)}$$
for all $N \in \N$ (with a constant depending on $N$).

Repeating the earlier arguments, one sees that the $1/p-$ local smoothing estimates \eqref{1.6} imply that this last expression is dominated by $\la^{\delta(p)+\e}\|f\|_{L^p(\R^n)}$. This establishes the desired estimate \eqref{l.6} and thereby finishes the proof of Proposition \ref{maxboch}.
\end{proof}




\subsection{Circular maximal function estimates}\label{subsec:circular}

One may use local smoothing estimates
for the half-wave propagator in two spatial dimensions
to give an alternative proof of Bourgain's celebrated circular maximal function theorem
\cite{Bourgain1986}. Letting $\sigma_t$ denote the normalised Lebesgue measure on the dilated circle $t\mathbb{S}^1$, recall that this theorem states the following:

\begin{theorem}[Bourgain \cite{Bourgain1986}]\label{cicular maximal theorem} For all $p > 2$,
\begin{equation}\label{b.1}
\Bigl(\, \int_{{\mathbb R}^2}
\sup_{t>0} | f \ast \sigma_t (x)|^p
\, \ud x \, \Bigr)^{1/p}\lesssim \|f\|_{L^p({\mathbb R}^2)}.
\end{equation}
\end{theorem}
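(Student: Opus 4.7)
The plan is to reduce Bourgain's theorem to known local smoothing estimates for the half-wave propagator $e^{\pm it\sqrt{-\Delta}}$ in two dimensions, using the Fourier integral representation of the spherical averages from Example \ref{averaging operator example}. Writing
\begin{equation*}
A_t f(x) = \sum_{\pm} \frac{1}{(2\pi)^2} \int_{\hat{\R}^2} e^{i(\langle x,\xi\rangle \pm t|\xi|)} a_\pm(t\xi) \hat f(\xi)\,\ud \xi
\end{equation*}
with $a_\pm \in S^{-1/2}(\hat{\R}^2)$, I would perform a Littlewood--Paley decomposition $f = \sum_{k\geq 0} f_k$, where $f_k$ has frequency support in $|\xi| \sim 2^k$, and treat the piece $T_t^k f := A_t f_k$ for each dyadic scale $k$ separately. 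The low-frequency piece is trivially controlled by the Hardy--Littlewood maximal function, so only $k \geq 1$ requires work.

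For fixed $k \geq 1$, the key observation is that $t \mapsto T_t^k f(x)$ has time-Fourier support essentially contained in $|\tau| \sim 2^k$ (since the $t$-derivative of the phase is $\pm |\xi| \sim 2^k$). Consequently, the Sobolev embedding $W^{s,p}([1,2]) \hookrightarrow L^\infty([1,2])$ for $s > 1/p$, specialised to such frequency-localised functions, yields the Bernstein-type bound
\begin{equation*}
\sup_{t \in [1,2]} |T_t^k f(x)| \lesssim_{\varepsilon} 2^{k(1/p + \varepsilon)} \Big(\int_1^2 |T_t^k f(x)|^p \,\ud t\Big)^{1/p}
\end{equation*}
for any $\varepsilon > 0$. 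On the support of $a_\pm(t\xi)\beta_k(\xi)$ one has $|a_\pm(t\xi)| \sim 2^{-k/2}$, so $T_t^k$ is essentially $2^{-k/2}$ times a frequency-localised half-wave propagator. Invoking Sogge's local smoothing theorem \cite{Sogge1991}, which provides some positive regularity gain $\sigma = \sigma(p) > 0$ for every $p > 2$ in the plane, gives (using $\bar s_p = 1/2 - 1/p$)
\begin{equation*}
\|T_t^k f\|_{L^p_{x,t}([1,2]\times \R^2)} \lesssim 2^{-k(1/p + \sigma)}\|f_k\|_{L^p(\R^2)}.
\end{equation*}

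Combining the last two displays yields $\|\sup_{t \in [1,2]} |T_t^k f|\|_{L^p(\R^2)} \lesssim 2^{-k(\sigma - \varepsilon)} \|f_k\|_{L^p}$, and choosing $\varepsilon < \sigma$ makes the bound summable in $k$. Using the Littlewood--Paley inequality $\|f_k\|_{L^p} \lesssim \|f\|_{L^p}$ uniformly in $k$ (valid for $p \geq 2$), summation in $k$ yields $\|\sup_{t \in [1,2]}|A_t f|\|_{L^p(\R^2)} \lesssim \|f\|_{L^p(\R^2)}$. To upgrade from the time-localised supremum to $\sup_{t>0}$, I would apply Lemma \ref{maxdyad}: the frequency-localised argument above precisely verifies its hypothesis in each dyadic shell, and the lemma packages the resulting bounds via Littlewood--Paley theory to deliver the full global maximal estimate \eqref{b.1}.

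The main obstacle is that in the range $2 < p \leq 4$ the sharp local smoothing estimate in the plane remains conjectural. Fortunately this is no impediment here: the Bernstein--local smoothing interplay above requires only \emph{some} positive local smoothing gain $\sigma(p) > 0$ in order to close the $k$-summation, and this is already provided by Sogge's original (non-sharp) result \cite{Sogge1991} throughout the entire range $p > 2$. This is consistent with the fact that Bourgain's theorem is sharp at the endpoint $p = 2$, so no improvement on the local smoothing side could enlarge the range of $p$ for which the maximal estimate holds.
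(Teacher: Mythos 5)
Your proposal is essentially the same as the paper's argument: both decompose $f\ast\sigma_t$ into a low-frequency piece (controlled by the Hardy--Littlewood maximal function) and dyadic half-wave pieces, reduce the global supremum $\sup_{t>0}$ to $\sup_{t\in[1,2]}$ via Lemma \ref{maxdyad}, and trade the $L^\infty_t$ norm for an $L^p_t$ norm at the cost of $\sim 2^{k/p}$, which is then absorbed by any positive local smoothing gain. The only real difference is cosmetic: where the paper uses Lemma \ref{sob} (the fundamental-theorem-of-calculus Sobolev bound, requiring both the fixed-time estimate for $F^j_\pm(\,\cdot\,,1)$ and a local smoothing bound for $\tfrac{d}{dt}F^j_\pm$) and then interpolates the maximal estimate between $p=2,6,\infty$, you use the Bernstein observation that $t$-frequency localisation at scale $2^k$ makes the derivative cost a factor $2^k$, and you interpolate the local smoothing gain in $p$ before applying it — a cleaner-looking but logically equivalent rearrangement. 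One small caveat worth flagging: to invoke Lemma \ref{maxdyad} you need the multiplier to be of the form $m(t\xi)$ with $m$ supported in a fixed dyadic shell, so the dyadic decomposition should be in $t|\xi|$ (as in the paper, via $\beta(2^{-j}t|\xi|)$) rather than in $|\xi|$ alone; for $t\in[1,2]$ these coincide up to shells of bounded overlap, so this is readily fixed.
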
 

Theorem \ref{cicular maximal theorem} extends Stein's \cite{Stein1976} earlier spherical maximal theorem which states that for $n\ge 3$ the maximal operator associated with spherical averages in $\Rn$ is bounded on $L^p(\Rn)$ for $p > \frac{n}{n-1}$. Stein~\cite{Stein1976} also showed that these bounds fail for any $p\le \frac{n}{n-1}$. In particular, the circular maximal function featured in \eqref{b.1} is not bounded on $L^2({\mathbb R}^2)$; this partially accounts for the added difficulties in two dimensions (which were later overcome by Bourgain \cite{Bourgain1986}). 

It is remarked that, in contrast to the applications featured in the previous sections, to prove the sharp maximal function result from local smoothing estimates one does \emph{not} require a sharp gain in regularity in the hypothesised local smoothing estimates; in fact, as will be discussed in Remark \ref{remark: not 1/p needed} below, \emph{any} non-trivial gain in regularity over the fixed-time estimate for \emph{any} $2 < p < \infty$ yields the sharp maximal inequality (however, for concreteness, we shall work with the $L^6$ local smoothing estimate). That local smoothing estimates can be used to give an alternative proof of \eqref{b.1} was first observed by Mockenhoupt, Seeger and the third author in \cite{Mockenhaupt1992}.

\begin{proof}[Proof (of Theorem \ref{cicular maximal theorem})] It suffices to prove the maximal estimates for nonnegative $f$.  Also, since the result is trivial when $p=\infty$, it suffices to prove the
bounds under this assumption when $2<p<\infty$.

Let $\beta\in C^\infty_0((1/2,2))$ be
the Littlewood--Paley bump function occurring in the proof
of Lemma~\ref{maxdyad}. As discussed in Example \ref{averaging operator example}, the Fourier transform
of the arc length measure $\sigma$ on $\mathbb{S}^1$ may be written as
\begin{equation*}
\hat{\sigma}(\xi)=
 \sum_\pm a_\pm(|\xi|) e^{\pm i|\xi|}
\end{equation*}
where $a_{\pm} \in S^{-1/2}$. Consequently, if $\beta_0(|\xi|) := 1-\sum_{j=1}^{\infty}\beta(2^{-j}|\xi|)
\in C^\infty_0$, one may write
\begin{multline}\label{b.4}
f \ast \sigma_t (x)
=(2\pi)^{-2}\int_{\hat{\R}^2} e^{i \langle x, \xi \rangle}
\beta_0(t|\xi|) \hat{\sigma}(t \xi ) \, 
\Hat f(\xi) \, \ud\xi
+(2\pi)^{-2}\sum_\pm\sum_{j=1}^\infty F^j_\pm (x,t)
\end{multline}
where
\begin{equation*}
F^j_\pm(x,t) := \int_{\hat{\R}^2} e^{i \langle x, \xi \rangle \pm it|\xi|}
\beta(2^{-j}t|\xi|) \, a_\pm(t|\xi|)
\, \Hat f(\xi) \, \ud\xi.
\end{equation*}
Note that the $F^j_\pm$ correspond to the half-wave propagator $e^{i t \sqrt{-\Delta}}$ except for the choice of symbol and the frequency localisation to the dyadic scale $2^j$.

Since $\beta_0 \hat{\sigma}\in C^\infty_0$, it follows
that the maximal operator associated with the first term
in the right-hand side of \eqref{b.4} is dominated by the
Hardy--Littlewood maximal function of $f$ and thus 
bounded on all $L^p({\mathbb R}^2)$ for $p>1$.  It therefore suffices to show that for all $2<p<\infty$ the remaining
terms satisfy
\begin{equation*}
\Bigl(\, \int_{{\mathbb R}^2} \,
\sup_{t>0} \, | F^j_\pm (x,t) |^p
\, \ud x\, \Bigr)^{1/p}
\lesssim 2^{-j\e_p}\|f\|_{L^p({\mathbb R}^2)}
\end{equation*}
for some $\varepsilon_p > 0$. On account of the support properties of $\beta$ and
Lemma~\ref{maxdyad}, it suffices to show that
\begin{equation}\label{b.6}
\Bigl( \int_{{\mathbb R}^2} 
\sup_{1\le t\le2} \, | F^j_\pm (x,t)|^p
 \ud x \Bigr)^{1/p}
\lesssim 2^{-j\e_p}\|f\|_{L^p({\mathbb R}^2)}.
\end{equation}
To prove this, we appeal to the following elementary lemma.

\begin{lemma}\label{sob}  Suppose that $F\in C^1(\R)$ and that $p>1$.  Then
\begin{equation}\label{b.7}
\sup_{1\le t\le 2}|F(t)|^p
\le |F(1)|^p + p\Bigl(\, \int_1^2 |F(t)|^p \, \ud t\, \Bigr)^{(p-1)/p}
\Bigl(\, \int_1^2 |F'(t)|^p \, \ud t\, \Bigr)^{1/p}.
\end{equation}
\end{lemma}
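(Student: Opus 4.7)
The plan is to apply the fundamental theorem of calculus to the function $t \mapsto |F(t)|^p$ on $[1,2]$, estimate its derivative via the chain rule, and then control the resulting integral by H\"older's inequality. This is a standard one-dimensional Sobolev-type embedding.

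First I would observe that, for any $s \in [1,2]$, the fundamental theorem of calculus yields
\begin{equation*}
    |F(s)|^p = |F(1)|^p + \int_1^s \frac{\ud}{\ud t}|F(t)|^p \,\ud t.
\end{equation*}
The function $u \mapsto |u|^p$ is $C^1$ on $\R$ (since $p>1$) with derivative bounded by $p|u|^{p-1}$, so the chain rule gives $\big|\tfrac{\ud}{\ud t}|F(t)|^p\big| \le p|F(t)|^{p-1}|F'(t)|$. (If $F$ is complex-valued one writes $|F|^2 = F\overline{F}$ and differentiates $(F\overline{F})^{p/2}$, arriving at the same bound.) Extending the integral to all of $[1,2]$ and taking absolute values therefore gives
\begin{equation*}
    |F(s)|^p \le |F(1)|^p + p\int_1^2 |F(t)|^{p-1}|F'(t)|\,\ud t.
\end{equation*}

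Next I would apply H\"older's inequality to the integral on the right with the conjugate exponents $p/(p-1)$ and $p$, so that
\begin{equation*}
    \int_1^2 |F(t)|^{p-1}|F'(t)|\,\ud t \le \Bigl(\int_1^2 |F(t)|^p\,\ud t\Bigr)^{(p-1)/p}\Bigl(\int_1^2 |F'(t)|^p\,\ud t\Bigr)^{1/p}.
\end{equation*}
Taking the supremum over $s \in [1,2]$ on the left-hand side of the preceding display then yields \eqref{b.7}.

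There is no serious obstacle: the only mild point of care is the differentiability of $u \mapsto |u|^p$ at $u=0$, which is harmless since $p>1$, together with (in the complex case) writing $|F|^p = (F\overline{F})^{p/2}$ to justify the chain rule. Everything else is a direct application of the fundamental theorem of calculus and H\"older's inequality.
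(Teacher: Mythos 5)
Your proof is correct and is essentially the same argument the paper gives: write $|F(s)|^p - |F(1)|^p$ via the fundamental theorem of calculus, bound the derivative of $|F|^p$ by $p|F|^{p-1}|F'|$, and finish with H\"older's inequality with exponents $p/(p-1)$ and $p$. The only difference is that you make the chain-rule step and the extension of the integral to $[1,2]$ explicit, whereas the paper compresses this into one line.
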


\begin{proof} The proof of \eqref{b.7} is very simple.  If one first
writes
$$|F(t)|^p = |F(1)|^p +p\int_1^t |F(s)|^{p-1} \cdot
F'(s) \, \ud s,$$
then \eqref{b.7} follows via H\"older's inequality.
\end{proof}

To use this lemma to prove \eqref{b.6} we shall exploit the fact that the operators $F^j_{\pm}$ have symbols of order $-1/2$ which are localised to frequencies $|\xi|\sim 2^j$. Consequently, the fixed-time estimates \eqref{fixed time FIO} for $e^{i t \sqrt{-\Delta}}$ give
$$\Bigl(\, \int_{\R^2} |F^j_\pm(x,1)|^p\, \ud x\, \Bigr)^{1/p}
\lesssim 2^{-j(1/2-\bar s_p)}\|f\|_{L^p({\mathbb R}^2)} \quad \textrm{for all $1<p<\infty$.}$$
Note that in two dimensions $1/2-\bar s_p>0$. As a result, by H\"older's inequality after integrating \eqref{b.7} in the $x$-variables, it suffices to prove that for all $2 < p < \infty$ the inequality
\begin{multline}\label{b.9}
\Bigl( \int_1^2\int_{\R^2} \bigl|F^j_\pm(x,t)\bigr|^p  \ud x \,\ud t
\Bigr)^{(p-1)/p}\,
\Bigl(\int_1^2\int_{\R^2} \bigl| \frac{d}{dt}F^j_\pm(x,t)\bigr|^p
 \ud x \, \ud t  \Bigr)^{1/p}
\lesssim 2^{-jp\e_p}\|f\|_{L^p({\mathbb R}^2)}
\end{multline}
holds for some $\varepsilon_p > 0$.

Using, for instance, the  $1/6-$ $L^6$ local smoothing
estimates for the half-wave operators $e^{it\sqrt{-\Delta}}$ in $\R^2$ from Theorem \ref{thm:LS} one has
\begin{equation}\label{b.10}
\Bigl(\, \int_1^2 \int_{\R^2} |F^j_\pm(x,t)|^6 \, \ud x \, \ud t \, 
\Bigr)^{1/6}\lesssim_\e 2^{(-\frac13+\e)j}\|f\|_{L^6(\R^2)} \, \, \, \,
\text{for } \, \e>0.
\end{equation}
Here we use the fact that $F^j_\pm$ incorporates a symbol
of order $-1/2$, is frequency localised at scale $2^j$
and $-1/2+(\bar s_p-1/p)=-1/3$ if $p=6$.  Since
$\tfrac{d}{dt}F^j_\pm$ is as $e^{i t \sqrt{-\Delta}}$ with symbol of order
$1/2$, one similarly obtains
\begin{equation}\label{b.11}
\Bigl(\, \int_1^2\int_{\R^2} \bigl| \, \frac{d}{dt}F^j_\pm(x,t)
\bigr|^6 \, \ud x \, \ud t\, \Bigr)^{1/6}\lesssim_\e
2^{(\frac23+\e)j}\|f\|_{L^6(\R^2)} \, \, \, \,\text{for } \, \e>0,
\end{equation}
using the fact that $2/3=(\bar s_p-1/p)+1/2$ if $p=6$. Clearly \eqref{b.10} and \eqref{b.11} together imply that
\eqref{b.9} holds for $p = 6$ and any $0 < \e_6<1/6$. 

Note that, by Plancherel's theorem and Lemma~\ref{sob},
if $p=2$ we have \eqref{b.6} for $\e_2=0$.  Also, the kernels of the operators $f\to F^j_\pm(t, \, \cdot\, )$ are easily seen to be in $L^1(\R^n)$ uniformly in $t>0$ and $j\in \N$. This yields the analogue of \eqref{b.6} with $p=\infty$ and $\e_\infty=0$. Interpolating between
these two easier cases and the non-trivial bounds for
$p=6$, one obtains \eqref{b.6} for any $2<p<\infty$, which
completes the proof of Bourgain's circular maximal function
theorem.

\end{proof}

\begin{remark}\label{remark: not 1/p needed}
Note that any $\varepsilon_6>0$ suffices to obtain $\varepsilon_p>0$ for $2 < p< \infty$ in \eqref{b.9} after interpolating with $\varepsilon_2=\varepsilon_\infty=0$. Thus, as is remarked at the beginning of this subsection, the full strength of $1/6-L^6$ local smoothing for $e^{i t \sqrt{-\Delta}}$ is not needed here (nor is the particular choice of exponent $p = 6$): \textit{any} non-trivial local smoothing suffices. This is in contrast with $\S\S$\ref{subsec:BR}-\ref{subsec:MBR}. Similar considerations will apply for the variable coefficient variants in the next subsection.
\end{remark}




\subsection{Variable coefficient circular maximal function estimates}

Using local smoothing estimates for general Fourier
integral operators (as opposed to simply the euclidean half-wave propagators $e^{i t \sqrt{-\Delta}}$), one may modify the argument in $\S$\ref{subsec:circular} to
obtain a generalization of Bourgain's circular maximal function
theorem for geodesic circles on Riemannian surfaces. This
was originally shown by the third author in \cite{Sogge1991}.

Before describing the results, it is perhaps useful to review the relevant concepts from Riemannian geometry. If $(M,g)$ is a Riemannian manifold, then for any point $x \in M$ and tangent vector $v \in T_xM$ there exists a unique geodesic $\gamma_v$ such that $\gamma_v(0) = x$ and $\gamma_v'(0) = v$. Moreover, there exists some open neighbourhood $U \subseteq T_x M$ of the origin such that the \emph{exponential map} $\exp_x \colon U \to M$ taking $v \in U$ to $\exp_x(v) := \gamma_v(1)$ is well-defined. The \emph{injectivity radius $\mathrm{Inj}_x\,M>0$ of $M$ at $x$} is the supremum over all $r > 0$ for which $\exp_x$ may be defined on $B(0, r) \subset T_xM$. The \emph{injectivity radius $\mathrm{Inj}\,M\geq 0$ of $M$} is then defined to be the infimum of $\mathrm{Inj}_x\,M$ over all $x \in M$. If $M$ is compact, then $\mathrm{Inj}\,M > 0$ and given any $x \in M$ and $0 < t < \mathrm{Inj}\,M$ one may define the \emph{geodesic circle}
\begin{equation*}
    S_{x,t} := \big\{\exp_x(v) : v \in T_{x}M \textrm{ such that } |v| = t \big\}.
\end{equation*}
Note that in the case $M=\mathbb{S}^2$, a geodesic circle amounts to a great circle. See, for instance, \cite[Chapter III]{Chavel2006} for more details.

Now suppose $(M,g)$ is a two-dimensional compact
Riemannian manifold. Define the average over the geodesic circle $S_{x,t}$ about $x \in M$ of radius $0<t<\text{Inj }M$ by
\begin{equation*}
    A_tf(x) := \int_{S_{x,t}} f(y) \, \ud\sigma_{x,t}(y),
\end{equation*}
where $\sigma_{x,t}$ denotes the normalised (to have unit mass) arc length measure on $S_{x,t}$.  Fixing $0<r_0<\text{Inj }M$, a natural
variable coefficient version of Theorem \ref{cicular maximal theorem} is as follows:

\begin{theorem}[\cite{Sogge1991}] With the above definitions, for all $p > 2$,
\begin{equation}\label{b.13}
\Bigl(\, \int_M \sup_{0<t<r_0}
|A_tf(x)|^p \, \ud x\, \Bigr)^{1/p} \lesssim_p
\|f\|_{L^p(M)}.
\end{equation}
 Here $\ud x$ is the volume element on $(M,g)$ and the
$L^p$-norm on the right is associated with this measure.
\end{theorem}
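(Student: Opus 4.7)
The plan is to mimic the proof of Bourgain's circular maximal theorem (Theorem \ref{cicular maximal theorem}) from \S\ref{subsec:circular}, replacing the euclidean half-wave propagator with the geodesic-circle averaging operator $A_t$ and invoking the variable-coefficient local smoothing result of Theorem \ref{global local smoothing theorem} in place of the corresponding estimate for $e^{it\sqrt{-\Delta}}$. By a smooth partition of unity on $M$, it suffices to bound the maximal function pointwise on a fixed coordinate chart, with $f$ supported in a slight enlargement of this chart.

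The geometric input is that, viewed as a single FIO mapping functions on $M$ to functions on $M \times (0, r_0)$, the family $(A_t)_{t \in (0, r_0)}$ satisfies the cinematic curvature condition. Following Example \ref{averaging operator example kernel}, one may locally represent
$$A_t f(x) = \frac{1}{2\pi} \int_{\R^2}\int_{\R} e^{i \theta \Phi_t(x;y)} a(x, t, y) f(y) \, \ud \theta \, \ud y,$$
where $\Phi_t(x;y) = d_g(x,y)^2/t^2 - 1$ is a defining function for $S_{x,t}$. For $0 < t < \mathrm{Inj}\,M$ the Monge--Amp\`ere matrix of $\Phi_t$ is non-singular, so the rotational curvature condition \eqref{RotCurv condition} is satisfied and each $A_t$ is a FIO of order $-1/2$. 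Applying the analysis of Example \ref{remark:CinCurv}, the cinematic curvature condition reduces to the non-degeneracy $\mathrm{rank}\,\partial^2_{\xi\xi} q = n-1 = 1$, where one computes $q(x, t; \xi) = \pm|\xi|_g$ (the cometric norm of $\xi$ with respect to $g$); this follows from strict convexity of the Riemannian cometric cone. Equivalently, one may relate $A_t$ to the half-wave parametrix for $e^{it\sqrt{-\Delta_g}}$ from Example \ref{wave equation on a manifold example}, for which cinematic curvature is classical. Theorem \ref{global local smoothing theorem} then yields $1/p-$ local smoothing for $A_t$ on $[\epsilon, r_0]$ for every $p \geq 6$.

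With local smoothing secured, one reproduces the argument of \S\ref{subsec:circular} verbatim. Decompose $\hat{f}$ via a Littlewood--Paley partition of unity into pieces supported where $|\xi| \sim 2^j$ ($j \geq 1$) together with a low-frequency remainder. The contribution of the low-frequency part and of the small-$t$ regime $\{0 < t < \epsilon\}$ is pointwise dominated by the Hardy--Littlewood maximal function of $|f|$. For each dyadic piece $F^j$, apply Lemma \ref{sob} on $[\epsilon, r_0]$ and integrate in $x$; using Theorem \ref{global local smoothing theorem} at $p = 6$ for both $F^j$ (symbol order $-1/2$) and $\partial_t F^j$ (symbol order $+1/2$), the analogues of \eqref{b.10}--\eqref{b.11} yield
$$\Big(\int_M \sup_{\epsilon \leq t \leq r_0} |F^j f(x, t)|^6 \, \ud x\Big)^{1/6} \lesssim 2^{-\epsilon_6 j} \|f\|_{L^6(M)}$$
for some $\epsilon_6 > 0$. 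Combining this with the uniform bounds at $p = 2$ (Plancherel and Lemma \ref{sob}) and at $p = \infty$ (uniform $L^1$-kernel bound, valid since $A_t$ has critical order $-(n-1)/2$), interpolation produces geometric decay $2^{-\epsilon_p j}$ for each $2 < p < \infty$; summing in $j$ completes the proof. The main obstacle is the verification of cinematic curvature for $A_t$; conceptually, this encodes strict convexity of the geodesic Hamiltonian $|\xi|_g$, which is precisely the property that distinguishes the Riemannian setting from degenerate cases such as the Radon transform (Example \ref{Radon revisited}).
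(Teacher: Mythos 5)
Your verification of the cinematic curvature hypothesis and your invocation of Theorem \ref{global local smoothing theorem} at $p=6$ both align with what the paper does. However, there is a genuine gap in your treatment of small radii. You assert that for a \emph{fixed} $\epsilon > 0$ the contribution of $\sup_{0<t<\epsilon}|A_t f(x)|$ is pointwise dominated by the Hardy--Littlewood maximal function of $|f|$. This is false: by a rescaling argument (or, on a manifold, because $d_g$ is asymptotically Euclidean at small scales), the circular maximal operator restricted to $0<t<\epsilon$ carries essentially the same difficulty as the full operator, and it is certainly not controlled by $Mf$ --- consider $f = \chi_{B(0,\delta)}$ with $\delta \ll \epsilon$, for which $\sup_t A_t f(x) \sim \delta/|x|$ whereas $Mf(x) \sim (\delta/|x|)^2$. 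The only regime in which the Hardy--Littlewood bound holds is the \emph{frequency-dependent} range $t \lesssim 2^{-j}$ for the piece at frequency $|\xi| \sim 2^{j}$, because there the circle is smaller than the wavelength of $f_j$; for a fixed $\epsilon$ independent of $j$ the claim fails.

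Relatedly, your pure-frequency Littlewood--Paley decomposition ($|\xi| \sim 2^j$) is not the one used in the paper, which decomposes according to the product scale $\beta(2^{-j}t|\xi|)$, so that on each piece the amplitude $\hat\sigma_{x,t}(\xi)$ has a \emph{uniform} decay $\sim 2^{-j/2}$ regardless of $t$. With your decomposition, for $t \sim 2^{-j}$ and $|\xi| \sim 2^j$ the effective scale $t|\xi| \sim 1$ and the amplitude is $O(1)$, so the boundary term $|F^j(\,\cdot\,,\epsilon_j)|$ in Lemma \ref{sob} carries no $2^{-j}$-gain and the sum over $j$ diverges. The paper instead passes to the $t|\xi|$-decomposition, uses a Lemma \ref{maxdyad}-type argument to localise to dyadic time intervals $t \in [2^{-k},2^{-k+1}]$ with matching frequency $|\xi| \sim 2^{j+k}$, and then --- for $2^{-k} \ll r_0$ --- invokes a \emph{dilation argument}: one rescales by $2^k$, observes that
\[
d_g(x;y) = \sqrt{\textstyle\sum_{1\le i,\ell\le 2} g_{i\ell}(x)(x_i-y_i)(x_\ell-y_\ell)} + O(|x-y|^2)
\]
so the rescaled phase converges to a constant-metric (flat) one, and uses that the local smoothing constants in Theorem \ref{global local smoothing theorem} are uniform over this family of phases. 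This dilation step is where the smallness of $t$ is genuinely dealt with, and it is absent from your argument; without it the claim does not close.
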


\begin{proof} To prove these general maximal inequalities, it suffices to establish the analogue of 
\eqref{b.13} where the norm is taken over $\Omega\subset M$,
a relative compact subset of a coordinate patch and
$f$ is assumed to be supported in $\Omega$; of course the estimate should be established uniformly over all such $\Omega$. Working in local coordinates, and if 
$\beta$ is the Littlewood--Paley bump function used before, for $0<t<r_0$ and
$x\in \Omega$ and $\mathrm{supp}\,f\subset \Omega$ one may write
$$A_tf(x)=A^0_tf(x)+\sum_{j=1}^\infty A^j_tf(x),$$
where $A^0_t f$ is dominated by the Hardy--Littlewood
maximal function of $f$ and
$$A^j_tf(x) := \int K_j(x,t;y) \, f(y)\, \ud y$$
for all $j \in \N$, where
\begin{equation*}
K_j(x,t;y) :=
\int_{\hat{\R}^2} \hat{\sigma}_{x,t}(\xi) \, \beta(2^{-j}t|\xi|) \, 
e^{i \langle x-y,  \xi \rangle} \, \ud\xi.
\end{equation*}
Here, and in what follows, the Fourier transforms are taken with respect to the local coordinates in which we are working.

The maximal operator associated with
$A^0_t$ is trivial to handle. Thus,
\eqref{b.13} would follow if one can show that for all $p>2$ and all $j \in \N$ the inequality
\begin{equation}\label{b.15}
\Bigl(\, \int_M \sup_{0<t<r_0} |A^j_tf(x)|^p \, \ud x
\, \Bigr)^{1/p} \lesssim 2^{-j\e_p}\|f\|_{L^p(M)}
\end{equation}
holds for some $\e_p>0$. If, as before, $\Hat f_\ell(\xi)=\beta(2^{-\ell}|\xi|)\Hat f(\xi)$,
then
\begin{equation*}
   A^j_tf=\sum_{|\ell-(k+j)|\le 10}A^j_tf \qquad  \textrm{for $t\in [2^{-k},2^{-k+1}]\cap (0,\, \text{Inj M})$.}
\end{equation*}
Based on this, one may adapt the earlier arguments of Lemma \ref{maxdyad} to see that \eqref{b.15} would follow from favourable
bounds for the maximal operators associated with
dyadic intervals: in particular, it suffices to show that for all $p>2$ there exists some $\varepsilon_p > 0$ such that
\begin{equation}\label{b.16}
\Bigl(\int \sup_{t\in [2^{-k},2^{-k+1}]
\cap (0,r_0]}|A^j_tf(x)|^p \, \ud x \,
\Bigr)^{1/p}\lesssim 2^{-j\e_p}\|f\|_p.
\end{equation}

If $2^{-k}$ is bounded away from zero, then the operators
$f\to A_tf(x)$ for $t\in [2^{-k},2^{-k}]\cap (0,\text{Inj }M)$
are a family of Fourier integral operators of order $-1/2$
satisfying the cinematic curvature condition (see \cite{Sogge2017}). Thus, \eqref{b.16} easily
follows from the $1/p-$ local
smoothing estimates for FIOs when $p\ge 6$ (that is, Theorem \ref{global local smoothing theorem}) and the above arguments.
One may also handle the case where $2^{-k}\ll r_0$
by using a dilation argument and the
local smoothing estimates in Theorem \ref{global local smoothing theorem}. This is due to the fact that
for $x,y\in \Omega$ the Riemannian distance function in 
our local coordinates satisfies $$d_g(x;y)=\sqrt{\sum_{1 \leq j, k \leq 2} g_{jk}(x)(x_j-y_j)(x_k-y_k)}+O(|x-y|^2),$$
where $g_{jk}(x)\ud x^j\ud x^k$ is the Riemannian metric written
in our local coordinates.  See \cite{Mockenhaupt1993} or
\cite{Sogge2017} for more details.
\end{proof}

\subsection{Maximal bounds for half-wave propagators}

Consider half-wave propagators $e^{it\sqrt{-\Delta_g}}$ either
on euclidean space or on a compact Riemannian
manifold $(M,g)$ of dimension $n\ge2$.  By the previous
arguments one has
\begin{equation}\label{h.1}
\Bigl(\, \int_M \sup_{0<t<1}\bigl|e^{it\sqrt{-\Delta_g}}f(x)
\bigr|^p \, \ud x\, \Bigr)^{1/p}\lesssim_\e \|f\|_{L^p_{\bar s_p+\e}(M)}
\end{equation}
if there is $1/p-$ local smoothing for $e^{i t \sqrt{-\Delta_g}}$; see \S\ref{fixed time estimates section} for the relevant definitions. Thus, Theorem \ref{global local smoothing theorem}
yields the following:

\begin{theorem}  Under the above assumptions
\eqref{h.1} holds for all $p\ge \frac{2(n+1)}{n-1}$.  Consequently,
for this range of exponents,
$$e^{it\sqrt{-\Delta_g}}f(x)\to f(x) \, \, a.e. \, \,
\, \text{if } \, f\in L^p_s \quad
\text{with } \, \, s>\bar s_p.$$
\end{theorem}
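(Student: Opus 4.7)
The theorem combines the maximal estimate \eqref{h.1}, valid in the range $p \geq \frac{2(n+1)}{n-1}$ where Theorem \ref{global local smoothing theorem} supplies $1/p-$ local smoothing for $e^{it\sqrt{-\Delta_g}}$, with the consequent pointwise convergence statement; the plan is to address each in turn. For the maximal estimate I would follow the circular maximal function template of $\S$\ref{subsec:circular}. Perform a Littlewood--Paley decomposition $f = P_0 f + \sum_{j \geq 1} P_j f$, where $P_j$ localises to frequencies $|\xi|\sim 2^j$; the low-frequency piece is dominated pointwise by the Hardy--Littlewood maximal function and hence bounded on $L^p$. For each $j \geq 1$, apply Lemma \ref{sob} to $F(t) := e^{it\sqrt{-\Delta_g}} P_j f(x)$ on $[0,1]$ (the proof adapts verbatim to this interval), integrate in $x$, and use H\"older to obtain
\begin{equation*}
\Big\|\sup_{0<t<1}|F|\Big\|_{L^p}^p \lesssim \|P_jf\|_{L^p}^p + \Big(\int_0^1 \|F(t)\|_{L^p}^p\,\ud t\Big)^{\frac{p-1}{p}} \Big(\int_0^1 \|F'(t)\|_{L^p}^p\,\ud t\Big)^{\frac{1}{p}}.
\end{equation*}

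The hypothesised $1/p-$ local smoothing then gives $\big(\int_0^1 \|F\|_{L^p}^p\,\ud t\big)^{1/p} \lesssim 2^{j(\bar s_p - 1/p+\varepsilon)}\|P_jf\|_{L^p}$; since $F' = i\sqrt{-\Delta_g}\, e^{it\sqrt{-\Delta_g}} P_jf$ picks up an additional factor of $2^j$ by Bernstein, the analogous bound for $F'$ is $2^{j(1+\bar s_p-1/p+\varepsilon)}\|P_jf\|_{L^p}$. The exponents combine as $\tfrac{p-1}{p}(\bar s_p-\tfrac 1p+\varepsilon)+\tfrac 1p(1+\bar s_p-\tfrac 1p+\varepsilon) = \bar s_p + \varepsilon$, producing $\big\|\sup_{0<t<1}|e^{it\sqrt{-\Delta_g}}P_jf|\big\|_{L^p} \lesssim 2^{j(\bar s_p+\varepsilon)}\|P_jf\|_{L^p}$. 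Summing over $j\geq 1$ and invoking Bernstein in the form $\|P_jf\|_{L^p}\lesssim 2^{-j(\bar s_p+2\varepsilon)}\|f\|_{L^p_{\bar s_p+2\varepsilon}}$ yields a convergent geometric series, establishing \eqref{h.1}.

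For the almost everywhere convergence, I would use a standard density argument. For $g \in C^\infty(M)$, $e^{it\sqrt{-\Delta_g}}g \to g$ uniformly as $t\to 0_+$ by the spectral representation \eqref{manifold propagator}. For general $f\in L^p_s$ with $s>\bar s_p$, pick $\varepsilon>0$ with $\bar s_p + \varepsilon < s$ and smooth $g_n \to f$ in $L^p_s$. For every $\lambda>0$, the set $\{x : \limsup_{t\to 0_+}|e^{it\sqrt{-\Delta_g}}f(x)-f(x)|>\lambda\}$ is contained in $\{x : \sup_{0<t<1}|e^{it\sqrt{-\Delta_g}}(f-g_n)|(x)>\lambda/2\}\cup\{x : |f(x)-g_n(x)|>\lambda/2\}$, and both of these have measure $\lesssim \lambda^{-p}\|f-g_n\|_{L^p_s}^p\to 0$ as $n \to \infty$, by \eqref{h.1} and Chebyshev respectively. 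Letting $\lambda\to 0$ gives the result.

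The main obstacle is that Theorem \ref{global local smoothing theorem} is stated over the time interval $[1,2]$, while the Sobolev inequality step above requires local smoothing over $(0,1)$. I would resolve this by dyadically decomposing $(0,1) = \bigsqcup_{k\geq 0}[2^{-k-1}, 2^{-k}]$ and rescaling each piece: on the $k$-th interval, rescaling $t\to 2^k t$ is equivalent to rescaling the metric $g\to 2^{2k}g$, and the induced canonical relation satisfies the cinematic curvature condition with uniform constants in $k$ (the rescaled metric, read in rescaled local coordinates, is a smooth perturbation of the Euclidean metric that becomes flatter as $k\to\infty$). Hence Theorem \ref{global local smoothing theorem} applies uniformly on each dyadic piece, and summing recovers the estimate on $(0,1)$. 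Alternatively, for $t$ in the non-oscillatory regime $t\ll 2^{-j}$ relative to the frequency $2^j$ of $P_jf$, the approximation $e^{it\sqrt{-\Delta_g}}P_jf\approx P_jf$ allows the small-$t$ contribution to be handled by elementary means.
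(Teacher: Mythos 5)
Your proof is correct, and it spells out exactly what the paper compresses into the phrase ``by the previous arguments'': a Littlewood--Paley decomposition, the pointwise Sobolev inequality of Lemma \ref{sob}, the hypothesised $1/p-$ local smoothing (now supplied by Theorem \ref{global local smoothing theorem}), and a standard density argument for the a.e.\ convergence. The exponent bookkeeping is right: $\tfrac{p-1}{p}(\bar s_p-\tfrac1p)+\tfrac1p(1+\bar s_p-\tfrac1p)=\bar s_p$, and the residual factor $2^{-j\varepsilon}$ obtained by trading $\|P_jf\|_{L^p}$ for $2^{-j(\bar s_p+2\varepsilon)}\|f\|_{L^p_{\bar s_p+2\varepsilon}}$ sums. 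You also correctly dispense with Lemma \ref{maxdyad}: in the circular maximal function argument it is used to pass from $t\in[1,2]$ to $t>0$, which is possible there because of the dilation structure $\widehat{\sigma_t}=\widehat{\sigma}(t\,\cdot)$; here the supremum is only over $(0,1)$, so that reduction is neither available nor needed.

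The one place you overcomplicate is the passage from $[1,2]$ to $(0,1)$, which you treat as the ``main obstacle.'' It is not. The interval $[1,2]$ appearing in \eqref{eq:LS FIO} and in Theorem \ref{global local smoothing theorem} is a pure normalisation: the amplitude in \eqref{FIO def n to n+1} is compactly $t$-supported and the conclusion is effectively an integral over $\R$. What matters is that H1) and H2) hold on the support of the amplitude, and for the wave parametrix they hold uniformly on $[0,1]$, \emph{including at $t=0$}. At $t=0$ the phase reduces to $\phi(x,0;\xi)=\langle x,\xi\rangle$, so the operator degenerates to the identity, but the cone $\Gamma_{(x,0)}$ is parametrised by $\xi\mapsto\partial_z\phi(x,0;\xi)=(\xi,-|\xi|_{g^*(x)})$, the cometric light cone, which has $n-1$ non-vanishing principal curvatures. (For the Euclidean propagator, $\partial_z\phi(x,t;\xi)=(\xi,|\xi|)$ is independent of $(x,t)$.) One may therefore apply Theorem \ref{thm:LS}/\ref{global local smoothing theorem} to the parametrix of Example \ref{wave equation on a manifold example} with $t$-support in $[0,1]$, either directly or after a trivial translation $t\mapsto t+1$ of the amplitude, and then run Lemma \ref{sob} on $[0,1]$ with $F(0)=P_jf(x)$, exactly as you do in the first half. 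Both of your proposed workarounds --- the dyadic rescaling $g\to 2^{2k}g$ and the low-frequency approximation $e^{it\sqrt{-\Delta_g}}P_jf\approx P_jf$ for $t\ll 2^{-j}$ --- would succeed, but they import needless bookkeeping (in the first case, the rescaled $L^p_s$-norms are taken with respect to $2^{2k}g$ and must be converted back to the original Sobolev scale).
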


It is noted that for a given $p$, \eqref{h.1} is sharp.  For instance, in the euclidean case if $s<\bar s_p$ and
$t\ne0$ there are $f\in L^p_s$ for which
$e^{it\sqrt{-\Delta}}f\notin L^p(\Rn)$ by a counterexample
of Littman~\cite{Littman}, and, in the manifold case, the
same is true when $t$ avoids a discrete set of times (see
\cite{Seeger1991}).




\section{Local smoothing and oscillatory integral estimates}\label{section: local smoothing and oscillatory integral estimates}

The aim of this section is to explore connections between local
smoothing for Fourier integral operators and $L^p$ bounds for oscillatory
integrals. As a consequence of this investigation, we will establish the necessary conditions for Conjecture \ref{LS conj FIO}.




\subsection{$L^p$ estimates for oscillatory integrals satisfying the Carleson--Sj\"olin condition}

Consider oscillatory integral operators of the form 
\begin{equation}\label{o.0}
T_\la f(x) := \int_{{\mathbb R}^{n-1}} e^{i\la \varphi(x;y')}
a(x;y') f(y')\, \ud y',
\end{equation}
sending functions of $(n-1)$ variables to functions of $n$ variables. Here $\varphi\in C^\infty(\R^n \times \R^{n-1})$ is assumed to be real-valued and $a \in C^\infty_0(\Rn\times {\mathbb R}^{n-1})$. It is also assumed that the phase functions satisfy the \textit{Carleson--Sj\"olin condition}, which has two parts:

\begin{mixed hessian condition}
\begin{equation}\label{o.1}
\rank \partial^2_{xy'}\varphi(x;y') \equiv n-1 \quad \text{for all } \, \,
(x;y')\in \text{supp }a.
\end{equation}
\end{mixed hessian condition}
Provided the support of $a$ is sufficiently small, this non-degeneracy condition ensures that for every $x_0$ in the $x$-support of $a$ the gradient graph
\begin{equation}\label{o.2}
\Sigma_{x_0}  := \{\nabla_x\varphi(x_0;y') : a(x_0; y') \neq 0 \} \subset T^*_{x_0}\Rn
\end{equation}
is a smooth hypersurface.

The other part of the Carleson--Sj\"olin
condition is the following curvature assumption.

\begin{curvature condition}
For each $x_0$ in the $x$-support of $a$, the hypersurface $\Sigma_{x_0}$ has non-vanishing Gaussian curvature at every point.
\end{curvature condition}

Under these assumptions, a problem of H\"ormander~\cite{Hormander1971} is to determine
for which $p\ge \frac{2n}{n-1}$ the estimate
\begin{equation}\label{o.4}
\|T_\la\|_{L^p({\mathbb R}^{n-1})\to L^p(\Rn)}=O_\e(\la^{-n/p+\e})
\end{equation}
holds for all $\varepsilon > 0$ (simple examples show that the constraint $p\ge \frac{2n}{n-1}$ is necessary). There are somewhat stronger formulations for $p>\frac{2n}{n-1}$
where $\e=0$ and $L^p({\mathbb R}^{n-1})$ is replaced
by $L^r({\mathbb R}^{n-1})$ for exponents $r<p$ satisfying
$\tfrac{n+1}{n-1}r'=p$; however, we shall focus on the 
formulation in \eqref{o.4} and its relation with local smoothing estimates.

\begin{theorem}[\cite{BHS}]\label{thmo}
Suppose that for a given $ \frac{2n}{n-1} \leq p < \infty$ there
is local smoothing of order $1/p-$ for all Fourier
integral operators satisfying the cinematic curvature
condition.  Then \eqref{o.4} holds for the same exponent $p$ for all phase functions
$\varphi$ satisfying the Carleson--Sj\"olin condition.
\end{theorem}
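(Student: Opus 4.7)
The plan is to realise $T_\lambda$ as the frequency-$\lambda$ slice of a Fourier integral operator $\mathcal{F}$ fitting into the cinematic curvature framework, and then to extract the bound \eqref{o.4} from the hypothesised $1/p-$ local smoothing estimate for $\mathcal{F}$.

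\emph{Construction of the FIO.} Given the Carleson--Sj\"olin phase $\varphi(x;y')$, I would define a degree-$1$ homogeneous phase $\phi\colon\R^n\times\R\times(\R^n\setminus 0)\to \R$ by
$$\phi(x,t;\xi) := \xi_n\,\varphi\!\left(x;\frac{\xi'}{\xi_n}\right) + t\xi_n \qquad (\xi_n > 0),$$
with $\xi = (\xi',\xi_n)$, and symmetrically on $\xi_n < 0$. Take an amplitude $a \in S^{-(n-1)/2}$ supported in a small conic set where $\xi_n \sim |\xi|$ and $|\xi'/\xi_n|$ is bounded, so that $\mathcal{F}$ is a FIO of order $\mu = -(n-1)/2$. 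A direct computation with $y' = \xi'/\xi_n$ gives
$$\partial_\xi \phi = \bigl(\partial_{y'}\varphi(x;y'),\ \varphi(x;y') - y'\cdot\partial_{y'}\varphi(x;y') + t\bigr),$$
so the $(n+1)\times n$ mixed Hessian $\partial_{(x,t)}\partial_\xi\phi$ has last row $(0,\ldots,0,1)$ with upper $n\times(n-1)$ block $\partial_x\partial_{y'}\varphi$. Hence H1) from \S\ref{sec:LS} reduces to the mixed Hessian hypothesis \eqref{o.1}. An analogous calculation identifies the cone $\Gamma_{z_0}$ with the cone over the graph $\{(\nabla_x\varphi(x_0;y'),\,1) : y' \in \R^{n-1}\}$, so its $n-1$ non-vanishing principal curvatures coincide, up to non-vanishing factors, with those of $\Sigma_{x_0}$. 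Thus H2) reduces to the Carleson--Sj\"olin curvature hypothesis, and $\mathcal{F}$ lies in the class to which the local smoothing hypothesis applies.

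\emph{Matching $\mathcal{F}_t$ with $T_\lambda$ and the bookkeeping.} For $g \in L^p(\R^{n-1})$ of small compact support, I would build $f_g \in L^p(\R^n)$ so that, morally, $\widehat{f_g}(\xi) = \rho(\xi_n - \lambda)\,g(\xi'/\lambda)$ for a fixed bump $\rho$ (properly mollified to avoid distributional issues). A change of variables $\xi' = \lambda y'$, $\xi_n = \lambda + s$, together with the expansion $\lambda/(\lambda+s) = 1 + O(\lambda^{-1})$, shows that for $t\in[1,2]$
$$ \mathcal{F}_t f_g(x) = c\,e^{i\lambda t}\,T_\lambda g(x) + \text{(lower order in $\lambda$)}, $$
since the Jacobian $\lambda^{n-1}$, the symbol scaling $\lambda^{-(n-1)/2}$, and the slab width precisely combine to an $O(1)$ prefactor. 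Applying the local smoothing hypothesis at frequency scale $\lambda$ (and using that, for $f$ frequency-localised at scale $\lambda$, the Sobolev norm $L^p_s$ is comparable to $\lambda^s$ times the $L^p$ norm) gives
$$ \|\mathcal{F} f_g\|_{L^p(\R^n\times[1,2])} \lesssim_\e \lambda^{\mu + \bar{s}_p - 1/p + \e}\,\|f_g\|_{L^p(\R^n)} = \lambda^{-n/p + \e}\,\|f_g\|_{L^p(\R^n)}, $$
using $\mu = -(n-1)/2$ and $\bar{s}_p = (n-1)(1/2-1/p)$ for $p \geq 2n/(n-1)$. Combined with the matching identity and the norm comparison $\|f_g\|_{L^p(\R^n)} \sim \|g\|_{L^p(\R^{n-1})}$, this yields \eqref{o.4}.

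\emph{Main obstacle.} The critical technical point is the norm comparison $\|f_g\|_{L^p} \sim \|g\|_{L^p}$ when $p > 2$. A naive Fourier inversion gives $\|f_g\|_{L^p(\R^n)} \sim \lambda^{(n-1)(1-1/p)}\,\|\hat g\|_{L^p(\R^{n-1})}$, which cannot be estimated in terms of $\|g\|_{L^p}$ by Hausdorff--Young when $p > 2$. One natural workaround is to prove the corresponding bound for the dual operator $T_\lambda^*$ acting on $L^{p'}(\R^n)$, where $p' < 2$ puts Hausdorff--Young on the right side, and then dualise to recover \eqref{o.4}. An alternative is to decompose $f_g$ into wave packets aligned with the bicharacteristics of $\mathcal{F}$ and perform the matching term by term. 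Either way, the bookkeeping must be arranged so that the gain of $1/p$ derivatives from local smoothing combines with the fixed-time loss $\bar{s}_p$ and the codimensional scaling $(n-1)/2$ to produce exactly the target exponent $-n/p$.
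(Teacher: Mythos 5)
Your FIO construction via the homogeneous extension $\phi(x,t;\xi) = \xi_n\varphi(x;\xi'/\xi_n) + t\xi_n$ is a legitimate way to promote $T_\lambda$ to a cinematic-curvature FIO, and your verification of H1) and H2) from the Carleson--Sj\"olin hypotheses is sound. But the matching step has a genuine gap that your workarounds do not close. First, the claimed ``$O(1)$ prefactor'' is incorrect: changing variables $\xi' = \lambda y'$, $\xi_n = \lambda + s$ gives a Jacobian $\lambda^{n-1}$, the order-$(-\frac{n-1}{2})$ symbol contributes $\lambda^{-(n-1)/2}$, and the slab in $\xi_n$ has width $O(1)$, so one is left with a prefactor $\lambda^{(n-1)/2}$. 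This power is cancelled against the scaling of $\|f_g\|_{L^p}$ only when $p = 2$ (via Plancherel). Second, and more fundamentally, as you observe, $\|f_g\|_{L^p(\R^n)} \sim \lambda^{(n-1)(1-1/p)}\|\check g\|_{L^p(\R^{n-1})}$ cannot be bounded by $\|g\|_{L^p(\R^{n-1})}$ when $p > 2$. Dualising does not rescue this: $T_\lambda^*$ is an $n$-to-$(n-1)$ operator and no longer sits inside the class of FIOs for which the local smoothing hypothesis is being assumed, so there is no ``local smoothing for $T_\lambda^*$'' to invoke. The wave-packet suggestion is not fleshed out. As written, the argument only closes at $p = 2$.

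The paper's proof is structurally different precisely at this point: it never prescribes the Fourier transform of the input. It first fattens $T_\lambda$ to an $n\to n$ oscillatory integral $S_\lambda f(x) = \int e^{i\lambda\Phi(x;y)}a(x;y')\rho(y_n)f(y)\,dy$ with $\Phi(x;y) = \varphi(x;y') + x_n + y_n$; since $\Phi$ differs from $\varphi$ by terms linear in the variables, $\|T_\lambda\|_{L^p\to L^p} \approx \|S_\lambda\|_{L^p\to L^p}$ by a Fubini/tensor argument with no Hausdorff--Young penalty. It then recognises the kernel of $S_\lambda$ as $\int_{\R} e^{i\lambda t}K(x,t;y)\,\ud t$ where $K(x,t;y) = a(x;y')\rho(y_n)\delta_0(t - \Phi(x;y))$ is the kernel of a one-Fourier-variable averaging FIO satisfying the cinematic curvature condition, inserts the Bessel potential $(\sqrt{I-\Delta_x})^{(n-1)/2 - s}$ to define $\mathcal{F}_s$ of order $-s$, uses H\"older in $t$ to bound $\|S_\lambda f\|_{L^p(\R^n)}$ by $\|\mathcal{F}_s f\|_{L^p(\R^n\times\R)}$, and finally extracts the power $\lambda^{-(n-1)/2 + s}$ via a frequency-localisation argument exploiting $\nabla_x\Phi\neq 0$ (i.e.\ that $S_\lambda$ projects away from low $x$-frequencies up to rapidly decaying error). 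The frequency parameter $\lambda$ thus enters through the output variable, not through a prescribed input test function, and that is what circumvents the obstacle you correctly flagged.
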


As Theorem \ref{global local smoothing theorem} ensures that there is local smoothing of order
$1/p-$ for all $ \frac{2(n+1)}{n-1} \leq p < \infty$ whenever the cinematic curvature condition holds, it follows that \eqref{o.4}
is valid for this range of exponents. This recovers a
slightly weaker version of Stein's \cite{Stein1986}
oscillatory integral theorem which says that the stronger $L^p-L^r$ estimates hold for $p \geq \frac{2(n+1)}{n-1}$ with $\e = 0$.

\begin{proof}[Proof (of Theorem~\ref{thmo})]  One may assume,
of course, that $a$ is supported in a small neighbourhood
of the origin in ${\mathbb R}^{n-1}\times \Rn$.  Also,
since replacing $\varphi$ by $\varphi(x;y')+Bx+Cy'$ where
$B:\, \Rn\to \Rn$ and $C: {\mathbb R}^{n-1}\to {\mathbb R}^{n-1}$ are linear does not change the operator norm
of $T_\la$, one may also assume that
\begin{equation}\label{o.5}
\nabla_{x;y'}\varphi(0;0)=0 \qquad \text{and} \qquad \det \partial^2_{x'y'} \varphi (0,0) \neq 0.
\end{equation}
If we set
\begin{equation}\label{o.55}
\Phi(x;y) := \varphi(x;y')+x_n+y_n, \quad y=(y',y_n),
\end{equation}
and if the support of $a$ is small enough, then
the Monge--Amp\`ere determinant of $\Phi$ satisfies
\begin{equation}\label{o.6}
{\rm det}\left(\begin{array}{cc}
0 & \partial_y\Phi\\
\partial_x \Phi &
\partial^2_{xy} \Phi
\end{array}\right)\neq 0 \qquad \textrm{for $(x;y')\in \text{supp }a$.}
\end{equation}
If $\rho \in C_0^\infty(\R)$ satisfies
$\rho\ge0$ and $\rho(0)=1$, then this implies that
\begin{equation}\label{o.7}
K(x,t;y) := a(x,y')\rho(y_n)\delta_0\bigl(t-\Phi(x;y)\bigr)
\end{equation}
is the kernel of a non-trivial Fourier integral of order $-(n-1)/2$ for each fixed $t$ near 0. Moreover, for each
$t\in \text{supp }\rho$, the associated Fourier integral operator satisfies the projection condition since \eqref{o.6} is 
equivalent to the fact that it has a canonical
relation which is a canonical graph; see Example \ref{remark:RotCurv}. 
For later use, note also that $K$ vanishes if $|t|$ is large.

Based on this, the canonical relation
$${\mathcal C}\subset T^*{\mathbb R}^{n+1}\, \backslash \, 0\times
T^*\Rn\, \backslash \, 0$$
arising from the Fourier integral
operator with kernel as in \eqref{o.7}, 
regarded as an operator sending smooth functions of
$y$ to smooth functions of $(x,t)$, satisfies the projection condition in the
cinematic curvature hypothesis; see Example \ref{remark:CinCurv}.  The cone condition
must also be valid since the image of the
projection onto the fibers $T^*_{x_0,t_0}{\mathbb R}^{n+1} \, \backslash \, 0$
for $(x_0,t_0)$ in the $(x,t)$ support of the kernel are just the cones
\begin{align}\label{cone}
\Gamma_{x_0,t_0}&=
\bigl\{\tau(\nabla_x\Phi(x_0;y),-1): \, \tau\in \R \, \backslash \, 0, \, \,
\Phi(x_0;y)=t_0, \, \, y\in \text{supp }a \, \rho \bigr\}
\\
&=\bigl\{\tau(z,-1): \, \, z\in \Sigma_{x_0}\}
\subset T^*_{x_0,t_0}{\mathbb R}^{n+1} \, \backslash \, 0 \notag
\end{align}
and these have $(n-1)$ non-vanishing principal curvatures
in view of the curvature condition.

Thus, the Fourier integral operators
\begin{equation}\label{o.88}
f\in C^\infty_0(\Rn)\to {\mathcal F}_sf(x,t) :=
\bigl(\sqrt{I-\Delta_x}\bigr)^{(n-1)/2-s}
\Bigl( \, \int_{\R^n} K(x,t;y)f(y)\, \ud y\, \Bigr)
\end{equation}
are Fourier integral operators of order $-s$ for each
fixed $t$ and the resulting family of Fourier integral operators
satisfies the cinematic curvature hypothesis.
As by hypothesis it is assumed that there is $1/p-$ local smoothing for such FIOs, one has
\begin{equation*}
\|{\mathcal F}_sf\|_{L^p(\Rn\times \R)}\lesssim_s
\|f\|_{L^p(\Rn)} \quad
\text{if } \, \, s>\bar s_p-1/p.
\end{equation*}

To see how this leads to \eqref{o.4}, observe first that since $\rho$ is non-trivial and $\Phi$ differs from $\varphi$ by terms which are linear in $x$ and $y$ (namely,
$x_n+y_n$) one must have
that
\begin{equation}\label{o.10}
\|T_\la\|_{L^p({\mathbb R}^{n-1})\to L^p(\Rn)}
\approx \|S_\la\|_{L^p(\Rn)\to L^p(\Rn)}
\end{equation}
for
$$S_\la f(x):=\int_{\Rn} e^{i\la\Phi(x;y)}
a(x;y')\rho(y_n) \, f(y)\, \ud y.$$

Next, let $m \in C^\infty(\R)$ satisfy
$m(r)=1$ if $r<1$ and $m(r)=0$ if $r>2$.
Then, since the Monge--Amp\`ere condition \eqref{o.6} implies
that $\nabla_x\Phi\ne 0$ on the support of the oscillatory
integral, a simple integration-by-parts argument shows that
\begin{equation}\label{o.11}
\bigl\| \, m(\sqrt{-\Delta_x}/c_o\la)\circ S_\la
\, \bigr\|_{L^p(\Rn)\to L^p(\Rn)}=O_N(\la^{-N})
\end{equation}
for all $N\in {\mathbb N}$ if  $c_0>0$ is chosen to be sufficiently small.  Furthermore,
$$\bigl\| \, (I-m(\sqrt{-\Delta_x}/c_0\la))\circ
(\sqrt{I-\Delta_x})^{-\gamma}\, \bigr\|_{L^p(\Rn)\to L^p(\Rn)}=O(\la^{-\gamma}), \quad \text{if } \, \gamma\ge 0.$$
Therefore, by \eqref{o.10} and \eqref{o.11}, for
such $\gamma$ one has 
\begin{equation}\label{o.12}
\|T_\la\|_{L^p({\mathbb R}^{n-1})\to L^p(\Rn)}
\lesssim \la^{-\gamma}
\bigl\| \, (\sqrt{I-\Delta_x})^{\gamma}\circ S_\la
\, \bigr\|_{L^p(\Rn)\to L^p(\Rn)} +O(\la^{-N}).
\end{equation}

On the other hand, if $K$ is as in \eqref{o.7} then
$$\int_{\R} e^{i\la t}K(x,t; y) \, \ud t =
e^{i\la \Phi(x;y)}\, a(x;y')\rho(y_n).$$
Since the right-hand-side is the kernel of the oscillatory
integral $S_\la$, 
one can use H\"older's inequality in $t$ to see that
if ${\mathcal F}_s$ is as in \eqref{o.88}, then
$$\bigl\| (\sqrt{I-\Delta_x})^{(n-1)/2-s}\circ S_\la\bigl\|_{L^p(\Rn)\to L^p(\Rn)}\lesssim
\bigl\| {\mathcal F}_s\bigr\|_{L^p(\Rn)\to 
L^p(\Rn\times \R)}.$$
Therefore, by \eqref{o.12}, if $(n-1)/2-s\ge 0$, then
\begin{equation}\label{o.13}
\|T_\la\|_{L^p({\mathbb R}^{n-1})\to L^p(\Rn)}
\lesssim \la^{-(n-1)/2+s}\, \|{\mathcal F}_s
\|_{L^p(\Rn)\to L^p(\Rn\times \R)}.
\end{equation}
Taking
 $s=\bar s_p-1/p+\e$ with
$\e>0$ small, \eqref{o.13} yields
$$\|T_\la\|_{L^p({\mathbb R}^{n-1})\to L^p(\Rn)}
=O(\la^{-(n-1)/2+\bar s_p-1/p+\e})=O(\la^{-n/p+\e}),$$
as $\bar s_p=(n-1)(\tfrac12-\tfrac1p)$.  Since this is 
\eqref{o.4}, the proof is complete.
\end{proof}




\subsection{Necessary conditions in Conjecture \ref{LS conj FIO}: sharpness of Theorem \ref{global local smoothing theorem} in odd dimensions}\label{sharpness section}

Bourgain identified in \cite{Bourgain1995} counterexamples to the estimates \eqref{o.4} for $p < \bar p_n$, where the exponent $\bar p_n$ is as defined in \eqref{exponents general}. Using this and Theorem \ref{thmo}, it follows that there are Fourier integral operators satisfying the cinematic curvature
hypothesis for which there cannot be local smoothing
of order $1/p-$ for any $p < \bar p_n$, leading to the range of exponents featured in Conjecture \ref{LS conj FIO}. In particular, this shows that the local smoothing estimates in Theorem \ref{global local smoothing theorem} are sharp in odd dimensions. Furthermore, Theorem \ref{thmo} may be used to formulate the more refined Conjecture \ref{LS conj FIO refined} through appropriate refined examples; details of the last fact will be omitted here and the reader is referred, for instance, to \cite[$\S$2.1]{Guth} for the heuristics behind such examples.

Bourgain's counterexample to \eqref{o.4} if $p < \frac{2(n+1)}{n-1}$ and $n\ge 3$ is odd is recalled presently.
The construction makes use of the
the symmetric
matrices
\begin{equation*}
A(s)=
\begin{pmatrix}
1 &s
\\
s &s^2
\end{pmatrix},
\end{equation*}
which depends on the real parameter $s$.
Observe that 
the matrices consisting of the derivatives of each component satisfy
\begin{equation*}
\det A'(s)\equiv -1,
\end{equation*}
while, on the other hand,
\begin{equation*}
\text{Rank }A(s)\equiv 1.
\end{equation*}

Using these matrices, if $x'=(x_1,\dots, x_{n-1})$, define the phase function $\varphi(x;y')$ on ${\mathbb R}^n\times {\mathbb R}^{n-1}$ by
\begin{equation*}
\varphi(x;y')= \langle x', y'\rangle +\frac12 \sum_{j=0}^{(n-3)/2} \bigl\langle \, A(x_n)(y_{2j+1}, y_{2j+2}), \, (y_{2j+1}, y_{2j+2})\, \bigr\rangle
\end{equation*}
and let $\Phi$ be as in \eqref{o.55}. If $T_\la$ is as in \eqref{o.0}, then stationary phase arguments yield (see, for example, \cite{Sogge2017})
\begin{multline}\label{badq}
\lambda^{-\frac{n-1}4 -\frac{n-1}{2p}} \lesssim \|T_\la\|_{L^\infty({\mathbb R}^{n-1})\to L^p({\mathbb R}^n)}
\lesssim \|T_\la\|_{L^p({\mathbb R}^{n-1})\to L^p(\Rn)}
\\
\text{if } \, \, \lambda \gg 1 \, \, \text{and } \, \, p\ge 2, \quad \text{provided that  } \, \, a(0;0)\ne 0.
\end{multline}

Clearly $\varphi$ satisfies the Carleson--Sj\"olin conditions
\eqref{o.1} and \eqref{o.2}.  Indeed, the surfaces
$\Sigma_{x_0}$ in \eqref{o.2} are, up to linear transformations,
the hyperbolic paraboloids in $\Rn$ parametrised by the graph
\begin{equation*}
\bigl(y', \, \frac12\sum_{j=0}^{(n-1)/2} y_{2j+1}y_{2j+2}
\bigr).
\end{equation*}

The counterexample now turns into a FIO counterexample for local smoothing following the proof of Proposition \ref{thmo}. Since \eqref{o.5} is also valid, if
${\mathcal F}_s$ is defined to be the Fourier integral operators
in \eqref{o.88}, then, by \eqref{o.13} and \eqref{badq}, 
one must have
$$\la^{-\frac{n-1}4-\frac{n-1}{2p}}
\lesssim \la^{-\frac{n-1}2+s}
\, \|{\mathcal F}_s\|_{L^p(\Rn)\to L^p(\Rn\times \R)}.$$
Since 
$$\tfrac{n-1}4+\tfrac{n-1}{2p}<\tfrac{n-1}2-\bigl(\bar s_p-\tfrac1p\bigr)
\quad \text{if } \, p<\tfrac{2(n+1)}{n-1},$$
one concludes that it does not hold that for sufficiently small $\sigma_p>0$
$$\|{\mathcal F}_s\|_{L^p(\Rn)\to L^p(\Rn\times \R)}<\infty
\, \, \, \text{if } \, \, 
p<\tfrac{2(n+1)}{n-1} \, \, \text{and } \, \,
s<(\bar s_p-1/p)+\sigma_p, 
$$
which means that the $1/p-$ local smoothing bounds break
down for these Fourier integral operators for this range
of exponents.

The construction can be modified to produce certain
negative results when $n\ge4$ is even.  In
this case one takes
\begin{multline*}
\varphi(x,y')= \langle x', y'\rangle +\tfrac12 \sum_{j=0}^{(n-4)/2} \bigl\langle A(x_n)(y_{2j+1},y_{2j+2}), \, (y_{2j+1}, y_{2j+2}) \, \bigr\rangle
+\tfrac12 (1+x_n) y^2_{n-1}
\end{multline*}
and defines $\Phi$ as in \eqref{o.55}.  The lower bound
for the resulting oscillatory integrals $S_\la$ in
\eqref{badq} changes to be
$$\lambda^{-\frac{n}4-\frac{n-2}{2p}}\lesssim \|T_\lambda\|_{L^p({\mathbb R}^{n-1})\to L^p({\mathbb R}^n)},$$
which, in turn, leads to the lower bound
$$\la^{-\frac{n}4-\frac{n-2}{2p}}
\lesssim \la^{-\frac{n-1}2+s}\|{\mathcal F}_s\|_{L^p(\Rn)
\to L^p(\Rn\times \R)}$$
for the Fourier integrals as in \eqref{o.88}.  A simple
calculation, as in the case of odd dimensions, now shows
that in the case of {\em even} $n\ge4$ these Fourier
integral operators, which satisfy the cinematic curvature
hypothesis, cannot have $1/p-$ local smoothing when
$p<\frac{2(n+2)}{n}$.

\begin{remark}[Odd versus even dimensional case]
As is mentioned in $\S$\ref{formulating a conjecture section}, the difference between the counterexamples for even and
odd dimensions can be explained by the geometry of the
cones \eqref{cone} associated to the Fourier integral
operators.  When $n$ is odd the cones involve dilates
of hyperbolic paraboloids and the number of
positive principal curvatures exactly matches the
number of negative ones: both equal $\frac{n-1}{2}$.  This is not possible when $n$ is even and in this case
there are only $\frac{n-2}{2}$ pairs of opposite signs; consequently, the resulting counterexamples involve larger exponents than those for odd $n$.
\end{remark}

It should be noted that the positive results of Stein \cite{Stein1986} showing that \eqref{o.4} holds for $p \geq \frac{2(n+1)}{n-1}$ are sharp in odd dimensions in view of Bourgain's counterexample. More recently, Bourgain and Guth \cite{Bourgain2011} obtained the positive results for $p\geq \frac{2(n+2)}{n}$ in the even dimensional case $n \geq 4$. Results for $n=2$ were obtained much earlier by Carleson and Sj\"olin \cite{Carleson1972}.

\begin{remark}[Signature hypothesis]
In view of Bourgain's counterexample, one should expect the estimate \eqref{o.4} to hold for $p < \bar p_n$ for oscillatory integrals $T_\lambda$ satisfying additional hypothesis on the signature of the associated hypersurfaces $\Sigma_{x_0}$. When all principal curvatures of $\Sigma_{x_0}$ are assumed to be of the same sign at each point, recent results
of Guth, Ilioupoulou and the second author \cite{Guth} show the favourable bounds for $T_\lambda$ in \eqref{o.4} for $p \geq \bar{p}_{n, +}$, which are sharp in view of previous counterexamples of Minicozzi and the third author \cite{Minicozzi1997} (see also \cite{Bourgain2011, Wisewell2005}).

In view of the connections between local smoothing estimates
and oscillatory integral theorems explored in this section, the results in \cite{Guth} suggest that
if the cones arising in the cinematic curvature hypothesis
have $(n-1)$ principal curvatures of the same sign, one should have $1/p-$ local smoothing for
all $ \bar{p}_{n,+} \leq p < \infty$; this corresponds to Conjecture \ref{LS conj FIO refined} with $\kappa=n-1$ and would imply Conjecture \ref{LS conj manifold}. Observe that Conjecture \ref{LS conj FIO refined} for $\kappa=n-1$ formally implies the results in \cite{Guth} via Theorem \ref{thmo}.
\end{remark}




\subsection{Maximal oscillatory integral estimates}

The above arguments also lead to maximal estimates for a natural class of oscillatory integral operators, including ones arising in spectral theory. As in the case of Bochner--Riesz operators, minor modifications of the proof that local smoothing implies oscillatory integral bounds yield corresponding maximal versions.

Consider oscillatory integrals of the form
\begin{equation*}
S_\la f(x):=\int_{\Rn} e^{i\la\Phi(x;y)} \, a(x;y) \, 
f(y)\, \ud y
\end{equation*}
where $a\in C^\infty_0(\Rn\times \Rn)$ and the
real smooth phase function $\Phi$ satisfies the 
\textit{$n\times n$ Carleson--Sj\"olin condition}, which has two parts:
\begin{rank condition}
\begin{equation}\label{m.2}
\rank \partial^2_{xy}\Phi(x;y) \equiv n-1 \quad
 \textrm{for all $(x;y)\in \text{supp }a$.}
\end{equation}
\end{rank condition}
This implies that for every fixed $x_0$ in the $x$-support of $a$ the gradient graph
\begin{equation*}
\Sigma_{x_0} := \{\, \nabla_x \Phi(x_0;y): \, \, a(x_0;y)\ne 0\, \} \subset T^*_{x_0}\Rn
\end{equation*}
is a smooth immersed hypersurface. 

The other part of the $n\times n$ Carleson--Sj\"olin
condition is identical to the curvature assumption which appeared earlier in this section.

\begin{curvature condition}
For each $x_0$ in the $x$-support of $a$, the hypersurface $\Sigma_{x_0}$ has non-vanishing Gaussian curvature at every point.
\end{curvature condition}

To be able to use
local smoothing estimates we shall also assume
that the \textbf{Monge--Amp\`ere condition} \eqref{o.6} holds on the support of $a$.

\begin{example}The class of oscillatory integral operators satisfying
these conditions includes ones arising in harmonic
analysis on Riemannian manifolds. In particular, if $d_g$
is the Riemannian distance function, then away from the diagonal $\Phi(x;y) := d_g(x;y)$ satisfies \eqref{m.2} and the curvature condition and has non-vanishing Monge--Amp\`ere determinant. 
\end{example}

The bounds \eqref{o.4} imply the corresponding bounds
\begin{equation}\label{m.4}
\|S_\la\|_{L^p(\Rn)\to L^p(\Rn)}=O_\e(\la^{-n/p+\e})
\end{equation}
for the same exponents.  Specifically, if \eqref{o.4} is valid for a given $p$ and all oscillatory integral satisfying the Carleson--Sj\"olin condition and possibly
an additional assumption on the geometry of the
hypersurfaces $\Sigma_{x_0}$ associated with $\varphi$, 
then \eqref{m.4} must be valid for the same exponent
$p$ for operators satisfying the $n\times n$ Carleson--Sj\"olin condition along with the same additional
geometric condition on the hypersurfaces $\Sigma_{x_0} \subset T^*_{x_0}\Rn$ associated
with $\Phi$. 

As a consequence of the preceding observation, the bounds \eqref{o.4} of Stein \cite{Stein1986} and Bourgain--Guth \cite{Bourgain2011} for the oscillatory
integral $T_\lambda$ imply that \eqref{m.4} holds for $p\ge \bar p_n$
if the $n\times n$ Carleson--Sj\"olin condition
is satisfied. Note that the counterexample of Bourgain \cite{Bourgain1995} described in the previous subsection also applies to operators $S_\lambda$, so such bounds are optimal in the sense that there are $S_\la$ for which \eqref{m.4}
cannot hold if $p<\frac{2(n+1)}{n-1}$ and $n$ is odd or $p < \frac{2(n+2)}{n}$ and $n \geq 4$ is even.

The local smoothing estimates in Theorem \ref{global local smoothing theorem} can be used to prove a maximal version of Stein's result for $S_\lambda$ which, by the previous discussion, is sharp in odd dimensions. It should be noted that the argument presented below would also yield maximal estimates for $p < \frac{2(n+1)}{n-1}$ in the even dimensional case or under stronger curvature hypotheses if one had the corresponding local smoothing estimates.

\begin{theorem}\label{mthm}
Suppose that $S_\la$ satisfies the $n\times n$ Carleson--Sj\"olin condition for all $\lambda \geq 1$ and that the phase
function $\Phi$ satisfies the Monge--Amp\`ere condition
\eqref{o.6} on the support of $a$.  For
$p\ge \frac{2(n+1)}{n-1}$ the maximal estimate
\begin{equation}\label{m.5}
\Bigl(\, \int_{\Rn}
 \, \sup_{\mu\in [\la,2\la]} \, 
\bigl| S_\mu f(x)\, \bigr|^p\, \ud x\, 
\Bigr)^{1/p}
\le C_\e \,  \la^{-n/p+\e}\, \|f\|_{L^p(\Rn)}
\end{equation}
holds for all $\e>0$.
\end{theorem}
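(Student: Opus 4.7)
The plan is to adapt the proof of Theorem~\ref{thmo} by inserting a supremum over $\mu \in [\lambda, 2\lambda]$ at each step in the reduction. First, because the Monge--Amp\`ere condition forces $\nabla_x \Phi \neq 0$ on $\supp a$, a non-stationary phase argument analogous to the one leading to \eqref{o.11} shows that if $m \in C^{\infty}(\R)$ is a cutoff to low frequencies and $c_0 > 0$ is sufficiently small (smaller than $\inf_{\supp a}|\nabla_x \Phi|$), then the kernel of $m(\sqrt{-\Delta_x}/c_0\lambda) \circ S_\mu$ is pointwise $O_N(\lambda^{-N})$ uniformly in $\mu \in [\lambda, 2\lambda]$. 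Consequently,
\[
\Big\|\sup_{\mu}\bigl|m(\sqrt{-\Delta_x}/c_0\lambda) S_\mu f\bigr|\Big\|_p = O_N(\lambda^{-N})\|f\|_p,
\]
so the low-frequency portion is negligible and matters reduce to bounding $(I - m)(\sqrt{-\Delta_x}/c_0\lambda) S_\mu f$.

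Next, as in the proof of Theorem~\ref{thmo}, I introduce the FIO $\mathcal{G}$ with kernel $K(x,t;y) := a(x;y)\delta_0(t - \Phi(x;y))$. The $n \times n$ Carleson--Sj\"olin condition combined with the Monge--Amp\`ere hypothesis ensures that $\mathcal{G}$ is of order $-(n-1)/2$ with canonical relation satisfying the cinematic curvature condition (cf.\ Example~\ref{remark:CinCurv} and the cone computation \eqref{cone}). For $s \in \R$, set $\mathcal{F}_s := (\sqrt{I-\Delta_x})^{(n-1)/2 - s} \circ \mathcal{G}$, which is a FIO of order $-s$ also satisfying cinematic curvature; crucially, $\mathcal{F}_s f(x, \cdot)$ is supported in a fixed compact $t$-interval $[-T, T]$ uniformly in $x$ and $f$, because $\Phi$ is bounded on $\supp a$ and the operator $(\sqrt{I - \Delta_x})^{(n-1)/2 - s}$ acts only in $x$. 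The Fourier identity $e^{i\mu \Phi(x;y)} = \int_\R e^{i\mu t}\delta_0(t - \Phi(x;y))\,\ud t$ yields
\[
(\sqrt{I - \Delta_x})^{(n-1)/2 - s} S_\mu f(x) = \int_\R e^{i\mu t} \mathcal{F}_s f(x,t)\,\ud t,
\]
so bounding $|e^{i\mu t}| \leq 1$ and using H\"older's inequality in $t$ against the compact support produces the key pointwise estimate
\[
\sup_{\mu}\bigl|(\sqrt{I - \Delta_x})^{(n-1)/2 - s} S_\mu f(x)\bigr| \leq (2T)^{1/p'} \Bigl(\int_{-T}^{T}|\mathcal{F}_s f(x,t)|^p\,\ud t\Bigr)^{1/p}.
\]
Raising to the $p$-th power, integrating in $x$, and applying the local smoothing bound of Theorem~\ref{global local smoothing theorem} to $\mathcal{F}_s$ at $s = \bar{s}_p - 1/p + \varepsilon$ (admissible because $p \geq \tfrac{2(n+1)}{n-1}$ and $\bar{s}_p - 1/p \geq 0$ in this range) gives $\Big\|\sup_\mu\bigl|(\sqrt{I - \Delta_x})^{(n-1)/2 - s} S_\mu f\bigr|\Big\|_p \lesssim_\varepsilon \|f\|_p$.

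Finally, I must remove the operator $(\sqrt{I - \Delta_x})^{\gamma}$ with $\gamma := (n-1)/2 - s > 0$ while preserving the supremum. Writing
\[
(I - m)(\sqrt{-\Delta_x}/c_0\lambda)\, S_\mu f = K_\lambda \ast_x \bigl[(\sqrt{I - \Delta_x})^\gamma S_\mu f\bigr],
\]
where $K_\lambda$ is the convolution kernel of the Fourier multiplier $(1 + |\xi|^2)^{-\gamma/2}(1 - m)(|\xi|/c_0\lambda)$, a dyadic decomposition in frequency at scales $|\xi| \sim 2^k c_0 \lambda$ for $k \geq 0$ together with scaling and the geometric summability (valid because $\gamma > 0$) gives $\|K_\lambda\|_{L^1} \lesssim \lambda^{-\gamma}$. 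Because $K_\lambda$ is independent of $\mu$, the pointwise bound $|(I - m) S_\mu f(x)| \leq |K_\lambda| \ast |(\sqrt{I - \Delta_x})^\gamma S_\mu f|(x)$ passes through the supremum, and Young's inequality combined with the previous step produces
\[
\Big\|\sup_\mu\bigl|(I - m) S_\mu f\bigr|\Big\|_p \lesssim \lambda^{-\gamma}\|f\|_p = \lambda^{-n/p + \varepsilon}\|f\|_p
\]
via the identity $(n-1)/2 - \bar{s}_p + 1/p = n/p$. Combined with the negligible low-frequency contribution this completes the proof. The main technical subtlety is precisely this $\mu$-independence of the kernel $K_\lambda$, which is what permits the final inversion of $(\sqrt{I-\Delta_x})^\gamma$ to be carried out pointwise under the supremum; without that feature one would be forced into a square-function argument and lose the sharp exponent.
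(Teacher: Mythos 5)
Your argument is correct and follows the same essential strategy as the paper: cut away the negligible low-frequency piece via non-stationary phase (the paper localizes $f$ to $|\xi|\gtrsim\lambda$ using $\nabla_y\Phi\neq0$, you localize the output using $\nabla_x\Phi\neq0$ — both consequences of \eqref{o.6}), represent $S_\mu f$ as the $t$-Fourier integral of the cinematic-curvature FIO built from $\delta_0(t-\Phi)$, and apply Theorem~\ref{global local smoothing theorem} together with H\"older in $t$. The only organizational difference is that the paper linearizes the supremum via a measurable $\mu(x)$ and encodes the $\lambda^{-n/p+\varepsilon}$ gain through a Bernstein-type estimate $\|f_\lambda\|_{L^p_{-n/p+\varepsilon}}\lesssim\lambda^{-n/p+\varepsilon}\|f\|_p$ applied to the high-frequency-localized input, whereas you first prove a maximal bound for $(\sqrt{I-\Delta_x})^\gamma S_\mu f$ and then pull the sup through the $\mu$-independent convolution kernel $K_\lambda$ — both routes are correct and amount to the same use of frequency localization.
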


\begin{proof} It suffices to show that, given $\e>0$ and 
$\frac{2(n+1)}{n-1} \leq p < \infty$, there
is a constant $C_\e$ such that 
\begin{equation*}
\Bigl(\, \int_{\Rn}\, |S_{\mu(x)}f(x)|^p\, \ud x
\, \Bigr)^{1/p}\le C_\e \la^{-n/p+\e}\, 
\|f\|_{L^p(\Rn)}
\end{equation*}
holds whenever $\mu(x): \Rn \to [\la,2\la]$ is measurable. To prove this, note that if 
$$K(x,t;y) := a(x;y)\, \delta_0\bigl(t-\Phi(x;y)\bigr)$$
then 
\begin{equation*}
{\mathcal F}f(x,t) := \int_{\Rn} K(x,t;y)\, f(y)\, \ud y
\end{equation*}
forms a one-parameter family of Fourier integrals of
order $-\frac{n-1}{2}$ satisfying the 
cinematic curvature condition. As in $\S$\ref{sharpness section}, the projection condition follows from the assumption that the 
Monge--Amp\`ere determinant associated with $\Phi$ never
vanishes, whilst the cone condition, as before, follows
from the curvature condition. Thus, Theorem \ref{global local smoothing theorem} implies that
\begin{equation}\label{m.8}
\bigl\|{\mathcal F}f\|_{L^p(\Rn\times \R)}
\lesssim_\e \|f\|_{L^p_{-n/p+\e}(\Rn)},
\end{equation}
since
\begin{equation*}
    -\tfrac{n-1}{2}+\big(\bar s_p-\tfrac{1}{p}\big)=-\tfrac{n}{p}.
\end{equation*}

Next, let $h\in C^\infty(\R)$ satisfy $h(r)=0$ for
$r<1/2$ and $h(r)=1$ for $r\ge 1$.  Then, since the Monge--Amp\`ere condition \eqref{o.6} implies that
$\nabla_y\Phi\ne0$ on the support of $a$, a simple integration-by-parts argument shows that, if $c_0>0$ is
chosen to be sufficiently small,
\begin{multline}\label{m.9}
S_\mu f(x)=S_\mu(f_\la)(x)+O(\la^{-N}\|f\|_p)
\\
\text{for}\quad f_\la := h(\sqrt{-\Delta_x}/c_o\la)f \quad
\text{and } \, \, \mu\in [\la,2\la],
\end{multline} 
for each $N\in {\mathbb N}$.  This is because for
$\mu\approx \la$ the kernel of $S_\mu\circ
(I-h(\sqrt{-\Delta_x}/c_0\la))$ is
$O(\la^{-N}(1+|y|)^{-N})$ for any $N$.

Next, use the fact that
$$S_{\mu(x)}g(x)=
\int_{\R} \int_{\R^n} e^{i\mu(x)t}\, K(x,t;y)\, g(y)\, \ud y  \,\ud t
=\int_{\R} e^{i\mu(x)t} \, {\mathcal F}g(x,t)\, \ud t.$$
Since ${\mathcal F}g(x,t)$ is compactly supported in
$t$, one may use H\"older's inequality and
\eqref{m.9} to deduce that
$$|S_{\mu(x)}f(x)|\lesssim
\Bigl(\, \int_{\R} |{\mathcal F}f_\la(x,t)|^p \, \ud t
\, \Bigr)^{1/p} + \, O(\la^{-N}\|f\|_p).$$
Since $S_{\mu(x)}f(x)$ vanishes for large $|x|$, this along
with \eqref{m.8} yields
\begin{align*}
\Bigl(\, \int_{\Rn}
|S_{\mu(x)}f(x)|^p \, \ud x \, \Bigr)^{1/p}
&\lesssim \|{\mathcal F}f_\la\|_{L^p(\Rn\times \R)}
+\la^{-N}\|f\|_{L^p(\R^n)}
\\
&\lesssim \|f_\la\|_{L^p_{-n/p+\e}(\Rn)}+\la^{-N}\|f\|_{L^p(\Rn)}
\\
&\lesssim \la^{-n/p+\e}\|f\|_{L^p(\Rn)},
\end{align*}
provided $N>n/p$. Here, in the last inequality, 
we used the fact that $\Hat f_\la(\xi)=0$ when $|\xi|$ is smaller than
a fixed multiple of $\la$. This completes the proof
of Theorem~\ref{mthm}.
\end{proof}

\begin{remark}
Under the assumption that the principal curvatures of $\Sigma_{x_0}$ are of the same sign, the counterexamples  in \cite{Minicozzi1997} show that \eqref{m.5} need not hold for $p<\bar{p}_{n,+}$. Furthermore, these counterexamples involve the model case where $\Phi(x;y) := d_g(x;y)$ for (certain choices of) Riemannian metrics $g$. In this setting, the recent results of Guth, Ilioupoulou and the second author \cite{Guth} concerning $T_\lambda$ suggest that \eqref{m.5} may hold for $p\ge \bar{p}_{n,+}$. It is not clear whether the additional hypothesis concerning the Monge--Amp\`ere determinant of $\Phi$ is necessary, since
the results of \cite{Guth} obtain \eqref{m.4} without this assumption. In any case, the proof of Theorem~\ref{mthm} required this assumption in order to be able to invoke the local smoothing estimates.
\end{remark}




\section{Wolff's approach to local smoothing estimates}\label{sec:Wolff}

The remaining sections of this survey discuss the proof of Theorem \ref{thm:LS}. Here we describe Wolff's approach which reduces local smoothing estimates to so-called \emph{decoupling inequalities} (see Theorem \ref{FIO decoupling theorem} below). His method has its roots in several ideas extensively used in harmonic analysis which go back to the work of Fefferman on the ball multiplier \cite{Fefferman1971}.




\subsection{Preliminary observations}\label{subsec:preliminary obs} Of course, Theorem \ref{thm:LS} follows from establishing
\begin{equation}\label{FIO estimate}
    \| \mathcal{F} f \|_{L^p(\R^{n+1})} \lesssim \| f \|_{L^p(\R^n)}
\end{equation}
for $\bar p_n \leq p < \infty$ and $\mu < -\alpha(p):= -\bar s_p +1/p$, where $\mathcal{F}$ is the operator \eqref{FIO def n to n+1}. We work with the representation of $\mathcal{F}$ in terms of an integral kernel: explicitly,
\begin{equation*}
    \mathcal{F}f(x,t) = \int_{\R^n} K(x,t;y)f(y)\,\ud y
\end{equation*}
where
\begin{equation}\label{kernel}
K(x,t;y) := \int_{\hat{\R}^n} e^{i (\phi(x,t; \xi)-\langle y, \xi \rangle)} b(x,t;\xi)(1 + |\xi|^2)^{\mu/2}\,\ud \xi
\end{equation}
and $b \in S^0(\R^{n+1} \times \R^n)$ is compactly supported in $x$ and $t$.

By the principle of stationary phase, one expects $K$ to be singular for those $(x,t;y)$ satisfying
\begin{equation*}
\nabla_\xi [ \phi(x,t;\xi) - \langle y, \xi \rangle] =0
\end{equation*}
for some $\xi \in \supp(b)$. In the prototypical case of the half-wave propagator $e^{it\sqrt{-\Delta}}$, 
for fixed $(x,t)$ this observation identifies the singular set of $K(x,t;\,\cdot\,)$ as lying in
\begin{equation}\label{singular sphere}
\Big\{y \in \R^n :  y-x = t \frac{\xi}{|\xi|} \textrm{ for some $\xi \in \mathrm{supp}\,b$}\Big\}
\end{equation}
and therefore inside the sphere
\begin{equation*}
    \Sigma_{(x,t)} := \{ y \in \R^{n} : |x-y| = t\}.
\end{equation*}
For general $\mathcal{F}$, as the map $\xi \mapsto \nabla_{\xi}\phi(x,t;\xi)$ is homogeneous of degree 0, the associated singular set for each fixed $(x,t)$ is typically an $(n-1)$-dimensional manifold. The relative complexity of the geometry of the singular sets places the study of such operators $\mathcal{F}$ well outside the classical Calder\'on--Zygmund theory; this is in contrast, for instance, with pseudo-differential operators, where the singularity occurs at an isolated point. 

The fundamental approach to understanding the kernel $K$ is to perform multiple decompositions of the $\xi$-support of $b$ and thereby break $K$ into pieces with a much simpler underlying geometry. 




\subsection{Basic dyadic decomposition} 
The first step is to break up $\mathcal{F}$ into pieces which are Fourier supported on dyadic annuli. Fix $\beta \in C^{\infty}_c(\R)$ with $\mathrm{supp}\,\beta \in [1/2,2]$ and such that $\sum_{\lambda >0  : \,\mathrm{dyadic}} \beta(r/\lambda) = 1$ for $r \neq 0$. Let $\mathcal{F}^{\lambda} := \mathcal{F} \circ \beta(\sqrt{-\Delta}/\lambda)$, so that $\mathcal{F}^{\lambda}f$ has kernel $K^{\lambda}$ given by introducing a $\beta(|\xi|/\lambda)$ factor into the symbol in \eqref{kernel}, and decompose $\mathcal{F}f$ as
\begin{equation*}
\mathcal{F}f =: \mathcal{F}\,^{\lesssim 1}f + \sum_{\lambda \in \N :\, \mathrm{dyadic}} \mathcal{F}^{\lambda}f.
\end{equation*}

It is not difficult to verify that $\mathcal{F}\,^{\lesssim 1}$ is a pseudo-differential operator of order 0 and therefore bounded on $L^p$ for all $1 < p < \infty$ by standard theory (see, for instance, \cite[Chapter VI, \S5]{Stein1993}). Thus, the problem is further reduced to showing that for any arbitrarily small $\varepsilon>0$ the estimate
\begin{equation*}
\|\mathcal{F}^{\lambda}f\|_{L^p(\R^{n+1})} \lesssim \lambda^{\alpha(p) + \mu + \varepsilon}\|f\|_{L^p(\R^n)}
\end{equation*}
holds for all $\lambda \geq 1$; letting $\varepsilon := -\frac{\mu + \alpha(p)}{2} > 0$ the estimate \eqref{FIO estimate} would then follow from summing a geometric series.

The remaining pieces $\mathcal{F}^{\lambda}$ (for $\lambda$ large) are more complicated objects. The uncertainty principle tells us that the singularity present in $K$ should have been ``resolved to scale $\lambda^{-1}$'' in $K^{\lambda}$. For instance, in the case of the wave propagator $e^{it\sqrt{-\Delta}}$ the kernel $K^{\lambda}$ should no longer be singular along $\Sigma_{(x,t)}$ but should satisfy:
\begin{enumerate}[i)]
\item $K^{\lambda}(x,t;\,\cdot\,)$ is concentrated in a $\lambda^{-1}$-neighbourhood of $\Sigma_{(x,t)}$, given by
\begin{equation}\label{fattened sphere}
    \big\{y \in \R^n : \big||x - y| - t\big| \lesssim \lambda^{-1} \big\};
\end{equation}
\item $\|K^{\lambda}(x,t;\,\cdot\,)\|_{\infty} \lesssim \lambda^\mu \lambda^n$.
\end{enumerate}
\begin{figure}
\begin{center}

\resizebox {0.7\textwidth} {!} {
\begin{tikzpicture}

{
    \filldraw[yellow!50, cm={cos(35) ,-sin(35) ,sin(35) ,cos(35) ,(0 cm, 0 cm)}](0,0) -- (1,4) -- (-1,4) -- (0,0); 
}

\fill[blue!20,even odd rule] (0,0) circle (3.2cm) (0,0) circle (2.8cm);

\draw[blue, thick] (0,0) circle (3cm);

{
    \draw[yellow!50, thick, cm={cos(35) ,-sin(35) ,sin(35) ,cos(35) ,(0 cm, 0 cm)}](1,4) -- (-1,4); 
}

{
    \draw[black,line width=0.5mm, ->, cm={cos(35) ,-sin(35) ,sin(35) ,cos(35) ,(0 cm, 0 cm)}](0,0) -- (0,2) node [above, right, scale = 1.3] {$\xi_{\nu}$}; 
}

   \fill[white,even odd rule] (0,0) circle (4.2cm) (0,0) circle (3.8cm);

{
    \draw[red, thick, <->, cm={cos(35) ,-sin(35) ,sin(35) ,cos(35) ,(0 cm, 0 cm)}](2.5/4+0.2,2.5) -- (3.2/4+0.2,3.2); 
}

{
    \draw[red, cm={cos(35) ,-sin(35) ,sin(35) ,cos(35) ,(0 cm, 0 cm)}] (3.2/4 + 0.7,3.2 + 0.2)  node [scale = 1.3] {$\lambda^{-1}$}; 
}

{
\draw [red,thick, <->, cm={cos(35) ,-sin(35) ,sin(35) ,cos(35) ,(0 cm, 0 cm)}, domain=-15:15] plot ({3.4*sin(\x)}, {3.4*cos(\x)});
}

\draw[red, cm={cos(35) ,-sin(35) ,sin(35) ,cos(35) ,(0 cm, 0 cm)}] (-0.3, 3.6) node[above, right, scale = 1.3] {$ t\lambda^{-1/2}$}; 

 \node[below, scale = 1.3] at  (0,0) {$x$};

		\end{tikzpicture}}
\caption{For fixed $(x,t)$, the kernel $K^{\lambda}(x,t;\,\cdot\,)$ associated to the half-wave propagator $e^{i t \sqrt{-\Delta}}$ is concentrated on an annulus around the circle $x + t\mathbb{S}^{n-1}$ of thickness $\sim \lambda^{-1}$ (denoted here in \colorbox{blue!20}{blue}). The piece $K^{\lambda}_{\nu}(x,y;\,\cdot\,)$ is further localised to an angular sector with angle $\lambda^{-1/2}$ (denoted here in \colorbox{yellow!50}{yellow}).
}
\label{cap diagram}
\end{center}
\end{figure}
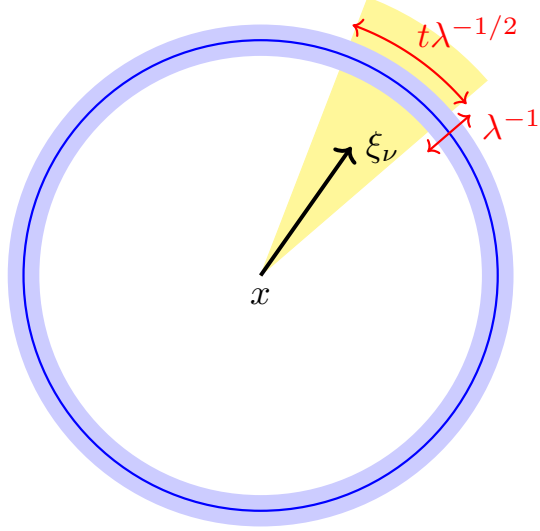
Here property i) is an uncertainty heuristic, whilst the second property trivially follows from the formula \eqref{kernel} for the kernel. These two features combine to give the crude estimate
\begin{enumerate}[i)]
\setcounter{enumi}{2}
\item $\int_{\R^n} |K^\lambda (x,t;y)|\, \ud y \lesssim \lambda^\mu \lambda^{n-1}$,
\end{enumerate}
which in turn yields an $L^{\infty} \to L^{\infty}$ bound for $\mathcal{F}^{\lambda}$. However, one may obtain a significant gain in the $\lambda$ exponent by subjecting $K^{\lambda}$ to a more refined stationary phase analysis. The method of stationary phase requires a uniform lower bound for $|\nabla_\xi \phi (x,t;\xi)|$ on $|\xi| \sim \lambda$; as $\xi \mapsto \nabla_\xi \phi (x,t;\xi)$ is homogeneous of degree 0, one should therefore decompose the angular variables into small regions in which $|\nabla_\xi \phi (x,t;\xi)|$ does not vary too much.




\subsection{Angular decomposition} For $\lambda$ fixed, let $\{\xi^\lambda_\nu\}_{\nu \in \Theta_{\lambda^{-1/2}}}$ be a maximal $\lambda^{-1/2}$-separated subset of $\mathbb{S}^{n-1}$, so that the indexing set satisfies $\#\Theta_{\lambda^{-1/2}} \sim \lambda^{(n-1)/2}$. Let 
\begin{equation*}
\Gamma_\nu^\lambda:=\{\xi\in \hat{\R}^n: \big|\pi_{\xi_{\nu}^{\lambda}}^{\perp} \xi\big| \lesssim \lambda^{-1/2}|\xi| \}
\end{equation*}
denote the sector of aperture $\sim\lambda^{-1/2}$ whose central direction is $\xi_\nu^{\lambda}$; here $\pi_{\xi^\lambda_\nu}^{\perp} $ is the orthogonal projection onto the hyperplane perpendicular to $\xi_{\nu}^{\lambda}$. Let $\{\chi^\lambda_\nu\}_{\nu \in \Theta_{\lambda^{-1/2}}}$ be a smooth partition of unity, homogeneous of degree 0, adapted to the $\Gamma^\lambda_\nu$, with $|D^\alpha \chi^\lambda_\nu (\xi) | \lesssim \lambda^{|\alpha|/2}$ for $\xi \in \mathbb{S}^{n-1}$ and all $\alpha \in \N_0^n$. Setting $b_\nu^\lambda(x,t;\xi):=b(x,t; \xi) \beta (|\xi|/\lambda) \chi_\nu^\lambda(\xi)$, the resulting operators $\mathcal{F}^{\lambda}_\nu$ have corresponding kernels
\begin{equation*}
K^{\lambda}_\nu (x,t;y):=\int_{\R^n} e^{i( \phi(x,t;\xi) - \langle y, \xi \rangle)} b_\nu^\lambda(x,t;\xi) (1+|\xi|^2)^{\mu/2} \, \ud \xi. 
\end{equation*}

To understand the effect of this frequency localisation on the spatial side, we once again consider the prototypical case of $e^{it\sqrt{-\Delta}}$. Recalling \eqref{singular sphere}, it follows from the choice of localisation that $K^{\lambda}_{\nu}$ should now be concentrated on the angular sector
\begin{equation*}
\big\{y \in \R^n : |\pi_{\xi_{\nu}^{\perp}}(x - y)|  \lesssim \lambda^{-1/2}|x-y|\big\}.
\end{equation*}
Combining this with property i) from the basic dyadic decomposition, it follows that:
\begin{enumerate}[i$'$)]
    \item $K^{\lambda}_{\nu}(x,t;\,\cdot\,)$ is concentrated in a $t\lambda^{-1/2}$ cap on the fattened sphere \eqref{fattened sphere}, centred at $t\xi_{\nu}^{\lambda}$ (see Figure \ref{cap diagram});
    \item $\|K^\lambda_\nu (x,t; \,\cdot\,)\|_{\infty} \lesssim \lambda^{\mu} \lambda^{(n+1)/2}$.
\end{enumerate}

It is not difficult to make these heuristics precise and, moreover, extend these observations to general variable-coefficient operators $\mathcal{F}$. In particular, the dyadic and annular decompositions allow one to linearise the phase $\phi(x,t;\xi)$ in the $\xi$-variable; this permits a standard stationary phase argument (see \cite{Seeger1991} or \cite[Chapter IX $\S\S$4.5-4.6]{Stein1993}) which reveals that the associated kernel $K^\lambda_\nu$ of $\mathcal{F}_\nu^\lambda$ satisfies the pointwise bound
\begin{equation}\label{kernel estimate}
|K_\nu^\lambda (x,t; y) | \lesssim \frac{\lambda^\mu \lambda^{(n+1)/2}}{(1+ \lambda|\pi_{\xi^\lambda_\nu} [ y - \nabla_\xi \phi(x,t,\xi_\nu^\lambda)]  | + \lambda^{1/2} |\pi_{\xi^\lambda_\nu}^{\perp} [y - \nabla_\xi \phi(x,t;\xi^\lambda_\nu)]  | )^{N}} 
\end{equation}
for all $N\geq 0$, where $\pi_{\xi^\lambda_\nu}$ denotes the projection onto the direction $\xi^\lambda_\nu$ and $\pi_{\xi^\lambda_\nu}^{\perp} $ its perpendicular projection. Note that \eqref{kernel estimate} immediately yields $\| K^\lambda_\nu (x,t; \,\cdot\,) \|_{1} \lesssim \lambda^{\mu}$, which together with the triangle inequality implies that
\begin{enumerate}[i$'$)]
\setcounter{enumi}{2}
    \item $\int_{\R^n} |K^\lambda(x,t;y)| \, \ud y \lesssim \lambda^\mu \lambda^{(n-1)/2};$
\end{enumerate}
note the square root gain over iii) obtained via the angular decomposition.




\subsection{Decoupling into localised pieces} Having found a natural decomposition of the operator 
\begin{equation*}
    \mathcal{F}^{\lambda} = \sum_{\nu \in \Theta_{\lambda^{-1/2}}} \mathcal{F}^{\lambda}_{\nu},
\end{equation*}
the problem is to effectively separate the contributions to $\| \mathcal{F}^{\lambda} f\|_{L^p(\R^{n+1})}$ coming from the individual the pieces. Since each $\mathcal{F}^{\lambda}_{\nu}f$ carries some oscillation, one may attempt to prove a square function estimate of the form
\begin{equation}\label{square function}
\| \mathcal{F}^{\lambda} f\|_{L^p(\R^{n+1})} \lesssim_{\varepsilon} \lambda^{\varepsilon} \big\|\big(\sum_{\nu \in \Theta_{\lambda^{-1/2}}} |\mathcal{F}^{\lambda}_{\nu}f|^2 \big)^{1/2}\big\|_{L^p(\R^{n+1})};
\end{equation}
here the appearance of the $\ell^2$ expression (rather than the $\ell^1$ norm which arises trivially from the triangle inequality) encapsulates the cancellation between the $\mathcal{F}^{\lambda}_{\nu}f$. Inequalities of the form \eqref{square function} were established in \cite{Mockenhaupt1992, Mockenhaupt1993}, albeit with a unfavourable dependence on $\lambda$, and these results have subsequently been refined by various authors \cite{BourgainSF, TV2, Lee2012, Lee}.

Unfortunately, establishing sharp versions \eqref{square function} appears to be an extremely difficult problem: indeed, the question is open even in the simplest possible case of the wave propagator $e^{it\sqrt{-\Delta}}$ with $n=2$. However, Wolff observed in \cite{Wolff2000} that sharp local smoothing inequalities can be obtained via a weaker variant of the estimate \eqref{square function} which is now known as a \emph{Wolff-type} or \emph{$\ell^p$-decoupling} inequality. Although still highly non-trivial, it transpires that the Wolff-type inequalities are nevertheless far easier to prove than their square function counterparts. In order to prove Theorem \ref{thm:LS} we will pursue Wolff's approach, and the key ingredient is the following estimate.

\begin{theorem}[Variable-coefficient Wolff-type inequality \cite{BHS}]\label{FIO decoupling theorem}
Let $2 \leq p \leq \infty$. For all $\varepsilon > 0$ the inequality
\begin{equation}\label{FIO decoupling estimate}
\|\mathcal{F}^{\lambda}f\|_{L^p(\R^{n+1})} \lesssim_{p, \varepsilon} \lambda^{\alpha(p) + \varepsilon} \Big(\sum_{\nu \in \Theta_{\lambda^{-1/2}}} \|\mathcal{F}^{\lambda}_{\nu}f\|_{L^p(\R^{n+1})}^p \Big)^{1/p}
\end{equation}
holds\footnote{Some slight technicalities have been suppressed in the statement of this theorem. In particular, the precise formulation includes some innocuous error terms: see \cite{BHS} for further details.}
, where
\begin{equation*}
\alpha(p) := \left\{ \begin{array}{ll}
\frac{\bar s_p}{2} & \textrm{if $2 \leq p \leq \frac{2(n+1)}{n-1}$}, \\[3pt]
\bar s_p - \frac{1}{p} & \textrm{if $\frac{2(n+1)}{n-1} \leq p < \infty$.}
\end{array}\right. 
\end{equation*}
\end{theorem}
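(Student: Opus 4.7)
My strategy is to reduce to a single critical endpoint and then bootstrap from the constant-coefficient Bourgain--Demeter cone decoupling via an induction-on-scale argument. Set $p_\ast := \tfrac{2(n+1)}{n-1}$. The trivial endpoints are straightforward: at $p = 2$, the angular sectors $\Gamma^\lambda_\nu$ have bounded overlap, so a Cotlar--Stein almost-orthogonality argument combined with Plancherel's theorem yields the $\ell^2$-inequality with $\alpha(2) = 0$. At $p = \infty$, the triangle inequality together with the counting estimate $\# \Theta_{\lambda^{-1/2}} \sim \lambda^{(n-1)/2}$ produces the bound with $\alpha(\infty) = (n-1)/2$. Real interpolation now supplies $\alpha(p) = \bar{s}_p/2$ on $[2, p_\ast]$ and $\alpha(p) = \bar{s}_p - 1/p$ on $[p_\ast, \infty]$, reducing the theorem to the critical case $p = p_\ast$.

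For the critical exponent I would argue by induction on the frequency parameter $\lambda$. Let $D(\lambda)$ denote the best constant in \eqref{FIO decoupling estimate} with $p = p_\ast$, and assume inductively that $D(\tilde\lambda) \le C_\varepsilon \tilde\lambda^{\alpha(p_\ast) + \varepsilon}$ for all $\tilde\lambda < \lambda$. The key external input is the sharp Bourgain--Demeter $\ell^{p_\ast}$-decoupling inequality for truncated cones in the constant coefficient setting. The bridge from constant to variable coefficient is a two-scale decomposition: pick an intermediate radius $1 \le R \le \lambda^{1/2}$, partition space-time into balls $B$ of radius $R^2$, and within each such ball group the fine directions $\xi^\lambda_\nu$ into coarser caps $\omega \subset \mathbb{S}^{n-1}$ of aperture $R^{-1}$. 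By the curvature condition H2, on each $B$ the variable cone $\Gamma_{z_0}$ is, up to errors of order $O(R^{-1})$, a fixed cone with $n-1$ non-vanishing principal curvatures. Applying the constant-coefficient Bourgain--Demeter decoupling to the resulting approximately constant coefficient piece at scale $R^2$ delivers an intermediate-scale inequality
\begin{equation*}
\|\mathcal{F}^\lambda f\|_{L^{p_\ast}(B)} \lesssim_\varepsilon R^{2\alpha(p_\ast) + \varepsilon} \Big( \sum_{\omega} \|\mathcal{F}^\lambda_\omega f\|_{L^{p_\ast}(B^\ast)}^{p_\ast} \Big)^{1/p_\ast},
\end{equation*}
where $\mathcal{F}^\lambda_\omega := \sum_{\nu : \xi^\lambda_\nu \in \omega} \mathcal{F}^\lambda_\nu$ and $B^\ast$ is a slight fattening of $B$. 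A parabolic rescaling centred at the apex of $\omega$ then reinterprets $\mathcal{F}^\lambda_\omega$ as an FIO of the same type, but with frequency essentially concentrated at a strictly smaller scale; applying the induction hypothesis to the rescaled operator further decouples it into the $\nu$-pieces. Combining the two steps yields a functional inequality from which, after summing over $B$ and iterating in $R$, the cumulative $\varepsilon$-losses coalesce into a single $\varepsilon$-loss and one obtains $D(\lambda) \lesssim_\varepsilon \lambda^{\alpha(p_\ast) + \varepsilon}$.

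The technical heart of the argument---and, I expect, the main obstacle---is the parabolic rescaling step in the variable coefficient setting. One has to verify that, after localising $\phi$ to a ball $B$ and a cap $\omega$ and then applying the affine change of variables that rescales $\omega$ to aperture $\sim 1$, the resulting ``rescaled'' phase still satisfies both H1 and H2 \emph{with constants uniform in $B$, $\omega$ and $R$}. Without this uniformity the induction fails to close. Fortunately H2 is formulated invariantly in terms of the generalised Gauss map, which behaves well under such rescalings, but careful book-keeping of the Taylor expansion of $\phi$ at the centre of $B$ is still required to control the cumulative errors and to ensure that the ``model cones'' produced at each iteration fall in a compact family. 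Everything else---the almost-orthogonality at $p=2$, the real interpolation, and the statement of constant-coefficient Bourgain--Demeter---is relatively standard.
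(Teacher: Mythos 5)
Your proposal is correct and follows essentially the same strategy as the paper's proof: approximate $\mathcal{F}^\lambda$ by constant-coefficient operators on small balls, apply the Bourgain--Demeter cone decoupling there, parabolically rescale the coarse angular caps back to unit aperture, and invoke an induction hypothesis to continue decoupling down to scale $\lambda^{-1/2}$, with the key technical point being precisely the one you flag---that one must run the induction over a \emph{class} of FIOs closed under the rescalings, with uniform control on H1) and H2) throughout. The only cosmetic difference is that the paper runs the induction directly over spatial scales $R \le \lambda$ for all $p$ simultaneously (after a preliminary rescaling placing $|\xi| \sim 1$, $(x,t) \in B(0,\lambda)$), rather than first interpolating down to the single critical exponent $p_\ast = \tfrac{2(n+1)}{n-1}$; your reduction is standard and harmless, though unnecessary, since the induction already produces the correct $\alpha(p)$ uniformly in $p$. (One small bookkeeping slip: with balls of radius $R^2$ the constant-coefficient approximation requires $R^4/\lambda \ll 1$, i.e.\ $R \ll \lambda^{1/4}$ rather than $\lambda^{1/2}$; in practice this is moot because the paper fixes the intermediate scale to be an $\varepsilon$-dependent constant $K$ and iterates $\sim \log_K R$ times, so the approximation constraint is automatically satisfied.)
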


\begin{remark}\label{trivial remark} \noindent
\begin{enumerate}[1)]
\item The value of $\alpha(p)$ coincides with that in $\S$\ref{subsec:preliminary obs}, which was only defined in the $\frac{2(n+1)}{n-1} \leq p < \infty$.
\item A necessary condition on $p$ for the square function estimate \eqref{square function} to hold is that $2 \leq p \leq \frac{2n}{n-1}$. For this range \eqref{square function} is stronger than \eqref{FIO decoupling estimate}, as can be seen by a simple application of Minkowski's and H\"older's inequalities. 
    \item It is instructive to compare \eqref{FIO decoupling estimate} with estimates obtained via trivial means. In particular, the triangle and H\"older's inequality imply that \eqref{FIO decoupling estimate} holds with the exponent $\alpha(p)$ replaced with $\frac{n-1}{2}\big(1 - \frac{1}{p}\big) = \bar s_p + \frac{n-1}{2p}$. Thus, the gain in the $\lambda$-power present in \eqref{FIO decoupling estimate} provides a measurement of the cancellation between the $\mathcal{F}_{\nu}^{\lambda}f$ arising from their oscillatory nature. 
\end{enumerate}
\end{remark}

Theorem \ref{FIO decoupling theorem} can be combined with simple estimates for the localised pieces (see  \eqref{recoupling inequality} below) to deduce the desired estimate
\begin{equation}\label{good times}
    \|\mathcal{F}^{\lambda}f\|_{L^p(\R^{n+1})} \lesssim _{s,p, \varepsilon} \lambda^{\alpha(p) + \mu +\varepsilon} \|f\|_{L^p(\R^n)};
\end{equation}
the details of this argument are discussed in the following subsection.


Theorem \ref{FIO decoupling theorem} is an extension of the result for the constant-coefficient operators
\begin{equation}\label{wave propagators}
    e^{i t h(D)} f(x):=\frac{1}{(2\pi)^n}\int_{\hat{\R}^n} e^{i( \langle x, \xi \rangle + t h(\xi))} \hat{f}(\xi) \, \ud \xi,
\end{equation}
which is a celebrated theorem of Bourgain--Demeter \cite{Bourgain2015, Bourgain2017a}; in line with our previous hypotheses on the phase, $h$ is assumed to be homogeneous of degree 1, smooth away from $
\xi = 0$ and such that the cone parametrised by $\xi \mapsto (\xi, h(\xi))$ has everywhere $(n-1)$ non-vanishing principal curvatures.

\begin{theorem}[Bourgain--Demeter \cite{Bourgain2015, Bourgain2017a}]\label{thm:BourgainDemeter} For all $2 \leq p \leq \infty$ and all $\varepsilon > 0$ the estimate
\begin{equation*}
\|e^{ith(D)}f\|_{L^p(\R^{n+1})} \lesssim_{\varepsilon, h} \lambda^{\alpha(p) +\varepsilon} \Big(\sum_{\nu \in \Theta_{\lambda^{-1/2}}} \|e^{ith(D)}f_{\nu}\|_{L^{p}(\R^{n+1})}^p \Big)^{1/p}
\end{equation*}
holds for all $\lambda \geq 1$ and functions $f$ such that $\supp(\widehat{f}) \subseteq \{ \xi \in \R^n:  \lambda \leq |\xi| \leq 2\lambda\}$.
\end{theorem}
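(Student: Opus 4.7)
The plan is to adapt the induction-on-scales strategy of Bourgain--Demeter \cite{Bourgain2015} for the cone. First, for $p \geq p_\circ := \frac{2(n+1)}{n-1}$ the inequality follows from the endpoint $p = p_\circ$ by interpolation with the trivial $\ell^\infty(L^\infty)$ bound from the triangle inequality (for which $\alpha(\infty) = (n-1)/2$); note that at $p = p_\circ$ the two defining formulas for $\alpha(p)$ agree, so there is no inconsistency. Moreover, at $p = 2$ the estimate is immediate from Plancherel's theorem and the essential disjointness of the Fourier supports of the $e^{ith(D)}f_\nu$, giving $\alpha(2) = 0$. Interpolation therefore reduces the problem to the critical exponent $p_\circ$.

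For this endpoint, let $D(\lambda)$ denote the smallest admissible constant in the inequality at scale $\lambda$. The aim is to establish a recursive bound of the form
\[
D(\lambda) \;\lesssim_\varepsilon\; \lambda^\varepsilon \cdot D(\lambda^{1/2}) \cdot D_{\mathrm{par}}(\lambda^{1/2}),
\]
where $D_{\mathrm{par}}(\mu)$ is the analogous constant in the $\ell^{p_\circ}(L^{p_\circ})$-decoupling for the $(n-1)$-dimensional paraboloid at scale $\mu$. Taking the paraboloid estimate as an input (itself the deep theorem of Bourgain--Demeter, yielding $D_{\mathrm{par}}(\mu) \lesssim_\varepsilon \mu^{\alpha(p_\circ)/2 + \varepsilon}$) and iterating from the trivial base case $D(C) = O(1)$ then delivers the desired bound on $D(\lambda)$.

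The two ingredients for the recursion are as follows. First, \emph{parabolic (Lorentz) rescaling}: each $\lambda^{-1/2}$-sector of the truncated cone at scale $\lambda$ is mapped by a suitable affine transformation to an affine copy of the full truncated cone at scale $\lambda^{1/2}$; decoupling within each such sector at the intermediate scale $\lambda^{1/2}$ therefore contributes one factor of $D(\lambda^{1/2})$. Second, \emph{lower-dimensional decoupling}: one groups the sectors into slabs of appropriate thickness obtained from neighbourhoods of hyperplanes containing the cone's axis; within each slab the truncated cone is, at the relevant scale, well-approximated by a translate of an $(n-1)$-dimensional paraboloid, to which paraboloid $\ell^{p_\circ}$-decoupling applies. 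Summing over the slabs via a cylindrical $L^{p_\circ}$-orthogonality argument (valid at the critical exponent) yields the factor $D_{\mathrm{par}}(\lambda^{1/2})$.

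For general $h$, the cone curvature hypothesis implies that after localising $\xi$ to a small conic neighbourhood and rescaling, $h$ admits a quadratic approximation whose angular Hessian has rank $n-1$; the analysis for the model case $h(\xi)=|\xi|$ then adapts with only notational changes. The main obstacle will be the correct execution of the two-scale decoupling recursion, which requires careful wave-packet decompositions, use of the locally-constant property at dual spatial scale $\lambda$, and the delicate cylindrical $L^{p_\circ}$-orthogonality step used to pass between the sector decomposition and the slab decomposition. The paraboloid decoupling input itself rests on multilinear Kakeya estimates together with a further induction on scales, and constitutes the deepest technical component underlying the whole argument.
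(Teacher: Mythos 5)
The result you are asked to prove is not proved in this paper at all: Theorem \ref{thm:BourgainDemeter} is quoted as a black box, with the paper stating explicitly that ``the proof \ldots is difficult and deep and relies on tools from multilinear harmonic analysis \ldots These important ideas will not be addressed in this survey,'' referring the reader to \cite{Bourgain2015, Bourgain2017a} and the study guide \cite{Bourgain2017}. So there is no in-paper proof to compare against; I am assessing your blind sketch on its own terms. The high-level plan you describe---reduce to the critical exponent $p_\circ = \tfrac{2(n+1)}{n-1}$ via interpolation with the trivial $\ell^\infty(L^\infty)$ and $\ell^2(L^2)$ endpoints, then deduce cone decoupling from paraboloid decoupling via a two-scale recursion combining Lorentz (parabolic) rescaling with lower-dimensional (cylinder-over-paraboloid) decoupling---does capture the shape of the standard Bourgain--Demeter derivation of cone decoupling from the paraboloid theorem. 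The exponent bookkeeping for the interpolation step also checks out: interpolating $\alpha(p_\circ) = \tfrac{n-1}{2(n+1)}$ with $\alpha(\infty) = \tfrac{n-1}{2}$ and $\alpha(2)=0$ reproduces the piecewise-linear $\alpha(p)$ in the statement.

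That said, there are concrete problems with the sketch as written. First, the description ``each $\lambda^{-1/2}$-sector of the truncated cone at scale $\lambda$ is mapped \ldots to the full truncated cone at scale $\lambda^{1/2}$'' cannot produce a two-scale recursion: once you have already decomposed into $\lambda^{-1/2}$-sectors you have nothing left to decouple. What is actually meant is that one first decouples into \emph{intermediate} $\lambda^{-1/4}$-sectors (cone decoupling at scale $\lambda^{1/2}$, contributing the $D(\lambda^{1/2})$ factor), and \emph{within} each such intermediate sector the cone is, up to error $O(\lambda^{-1})$, a cylinder over an $(n-1)$-dimensional paraboloid, to which paraboloid decoupling at the rescaled scale $\lambda^{1/2}$ is applied (contributing $D_{\mathrm{par}}(\lambda^{1/2})$). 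It is the Taylor error $O(|\xi'|^4)\lesssim\lambda^{-1}$ when $|\xi'|\lesssim\lambda^{-1/4}$ that forces the intermediate scale; you never mention this, and it is the whole reason the argument must recurse rather than approximating the full cone by a single cylinder. Second, the stated paraboloid input $D_{\mathrm{par}}(\mu)\lesssim_\varepsilon \mu^{\alpha(p_\circ)/2 + \varepsilon}$ is wrong by a factor of two: a direct computation (H\"older from $\ell^2(L^{p_\circ})$, using that the number of $\mu^{-1/2}$-caps on the $(n-1)$-dimensional paraboloid is $\sim\mu^{(n-1)/2}$) gives $D_{\mathrm{par}}(\mu)\lesssim_\varepsilon\mu^{\alpha(p_\circ)+\varepsilon}$. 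Since the recursion $D(\lambda)\lesssim\lambda^\varepsilon D(\lambda^{1/2})D_{\mathrm{par}}(\lambda^{1/2})$ has fixed point $D(\lambda)\sim D_{\mathrm{par}}(\lambda)$, plugging in your exponent would yield $D(\lambda)\sim\lambda^{\alpha(p_\circ)/2}$, which contradicts the sharpness of the theorem (cf.\ the discussion in Remark \ref{trivial remark}). Third, ``cylindrical $L^{p_\circ}$-orthogonality'' is a misnomer: there is no orthogonality in $L^{p_\circ}$; the cylindrical step is a Fubini/Minkowski argument (decoupling for $S$ implies decoupling for $S\times\R$ at any exponent), and labelling it as valid ``at the critical exponent'' suggests a misunderstanding. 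Finally, as you acknowledge, the genuinely deep content---the $(n-1)$-dimensional paraboloid decoupling itself, via Bourgain--Guth broad-narrow reduction, multilinear Kakeya, and parabolic rescaling within the multilinear framework---is black-boxed, so this sketch reduces one cited theorem to another cited theorem rather than constituting an independent proof.
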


The proof of Theorem \ref{thm:BourgainDemeter} is difficult and deep and relies on tools from multilinear harmonic analysis (in particular, the Bennett--Carbery--Tao multilinear Kakeya theorem \cite{Bennett2006} and the Bourgain--Guth method \cite{Bourgain2011}). These important ideas will not be addressed in this survey, and the interested reader is referred to the original papers \cite{Bourgain2015, Bourgain2017a} or the study guide \cite{Bourgain2017} for further information.

It transpires that the variable coefficient Theorem \ref{FIO decoupling theorem} can be deduced as a consequence of the constant coefficient Theorem \ref{thm:BourgainDemeter} via a relatively simple induction-on-scales and approximation argument. A sketch of the proof of Theorem \ref{FIO decoupling theorem} (avoiding many of the technical details) will be given in the next section.




\subsection{Bounding the localised pieces}\label{bounding localised pieces section} Given the variable-coefficient Wolff-type inequality from Theorem \ref{FIO decoupling theorem}, to conclude the proof of Theorem \ref{thm:LS} it suffices to show the localised pieces satisfy
\begin{equation}\label{recoupling inequality}
\Big(\sum_{\nu \in \Theta_{\lambda^{-1/2}}} \|\mathcal{F}^{\lambda}_{\nu}f\|_{L^p(\R^{n+1})}^p \Big)^{1/p} \lesssim \lambda^{\mu} \|f\|_{L^p(\R^n)}
\end{equation}
for $2 \leq p \leq \infty$. Indeed, combining this inequality with Theorem \ref{FIO decoupling theorem} one immediately obtains \eqref{good times}, as required. 

The inequality \eqref{recoupling inequality} is a simple consequence of the basic properties of the localised operators and, in particular, the kernel estimate \eqref{kernel estimate}. By real interpolation, it suffices to prove the bounds only at the endpoints $p = \infty$ and $p = 2$.

\subsubsection*{$L^{\infty}$-bounds} Observe that \eqref{kernel estimate} immediately implies
\begin{equation*}
\max_{\nu \in \Theta_{\lambda^{-1/2}}} \sup_{(x,t)\in \R^{n+1}}\|K^{\lambda}_{\nu}(x,t;\,\cdot\,)\|_{L^1(\R^n)} \lesssim \lambda^{\mu}.
\end{equation*}
From this, one deduces that 
\begin{equation*}
    \max_{\nu \in \Theta_{\lambda^{-1/2}}} \|\mathcal{F}^{\lambda}_{\nu}f\|_{L^{\infty}(\R^n)} \lesssim \lambda^{\mu} 
    \| f \|_{L^\infty(\R^n)}.
\end{equation*}
\subsubsection*{$L^2$-bounds} Useful estimates are also available at the $L^2$-level. For instance,  the wave propagator $e^{it\sqrt{-\Delta}}$
satisfies the conservation of energy identity
\begin{equation}\label{conservation of energy}
    \|e^{it\sqrt{-\Delta}}f\|_{L^2(\R^n)} = (2\pi)^{-n/2} \|f\|_{L^2(\R^n)} \qquad \textrm{for each fixed time $t \in \R$,}
\end{equation}
which, indeed, is a trivial consequence of Plancherel's theorem. This observation can be extended to the general variable coefficient setting at the expense of relaxing the equality to an inequality. In particular, a theorem of H\"ormander \cite{Hormander1973} (see also \cite[Chapter IX $\S$1.1]{Stein1993}) implies the bound 
\begin{equation}\label{Hormander L2}
 \| \mathcal{F}^{\lambda}_\nu f(\,\cdot\,,t) \|_{L^2(\R^{n})} \lesssim \| f_\nu^\lambda \|_{L^2(\R^n)}   \qquad \textrm{for each fixed time $t \in \R$,} 
\end{equation}
where $\widehat{f}^{\:\lambda}_\nu := \hat{f} \chi_\nu^\lambda$ is a piece of $f$ given by localising the frequencies to $\Gamma_{\nu}^{\lambda}$. Of course, in the general variable coefficient case Plancherel's theorem cannot be directly applied as in the proof of \eqref{conservation of energy}; nevertheless, \eqref{Hormander L2} can be established via a simple $T^*T$ argument and standard oscillatory integral techniques. 

One may now obtain space-time estimates for the $\mathcal{F}_{\nu}^{\lambda}f$ simply by integrating both sides of \eqref{Hormander L2} over a (compact) time interval containing the $t$-support of $b$. The almost orthogonality of the $f_\nu^\lambda$, given by Plancherel's theorem and the almost disjointness of $\Gamma_\nu^\lambda$, then readily implies that
\begin{equation*}
\Big(\sum_{\nu \in \Theta_{\lambda^{-1/2}}} \|\mathcal{F}^{\lambda}_{\nu}f\|_{L^2(\R^{n+1})}^2 \Big)^{1/2} \lesssim \lambda^{\mu} \|f\|_{L^2(\R^n)}.
\end{equation*}
This concludes the proof of Theorem \ref{thm:LS}. 




\section{Variable-coefficient Wolff-type inequalities}\label{sec:decoupling}

In the previous section the proof of the local smoothing estimate in Theorem \ref{thm:LS} was reduced to establishing the variable-coefficient Wolff-type inequalities in Theorem \ref{FIO decoupling theorem}. In this section we sketch the proof of Theorem \ref{FIO decoupling theorem}, which is in fact a consequence of the constant-coefficient case (that is, Theorem \ref{thm:BourgainDemeter}). 

\subsection{Preliminaries} It suffices to consider the case $\mu = 0$ (the general case then follows by writing any given operator as a composition of a pseudo-differential operator and an operator of order 0). By the homogeneity of $\phi(x,t;\cdot)$ and rescaling, Theorem \ref{FIO decoupling theorem} follows from its analogous statement when $|\xi| \sim 1$ and $(x,t) \in B(0,\lambda)$. Namely, it suffices to prove \eqref{FIO decoupling estimate} for the rescaled operators
$$
\mathcal{F}^\lambda f (x,t):=\int_{\hat{\R}^n} e^{i \phi^\lambda (x,t;\xi)} b^\lambda(x,t; \xi)  \widehat{f}(\xi) \, \ud \xi
$$
where
\begin{equation*}
\phi^{\lambda}(x,t;\omega) := \lambda\phi(x/\lambda, t/\lambda;\omega) \qquad \textrm{and} \qquad b^{\lambda}(x,t; \xi) := b(x/\lambda, t/\lambda,\xi)
\end{equation*}
and $b$ is supported in $B^{n+1} \times \Gamma$. Here $\Gamma$ is a conic domain of the type
$$
\Gamma:=\{\xi \in \hat{\R}^n : 1 \leq |\xi| \leq 2 \:\: \mathrm{and} \:\: |\xi/|\xi| - e| \lesssim 1\}
$$
for a unit vector $e \in \mathbb{S}^{n-1}$. Note that the notation $\mathcal{F}^\lambda$ is \emph{not} consistent with that used in the previous section.




\subsection{Inductive setup} The proof will involve an induction-on-scale procedure. To this end, an additional spatial scale parameter $R$ is introduced: it will be shown that for $1 \leq R \leq \lambda$ the inequality
\begin{equation}\label{decoupling inequality general R}
\|\mathcal{F}^{\lambda}f\|_{L^p(B_R)} \leq \bar{C}(\varepsilon, p) R^{\alpha(p) +\varepsilon} \big(\sum_{\nu \in \Theta_{R^{-1/2}}} \|\mathcal{F}^{\lambda}_\nu f\|_{L^{p}(B_R)}^p \Big)^{1/p}
\end{equation}
holds for a suitable choice of constant $\bar{C}(\varepsilon, p)$. Here $B_R \subseteq B(0,\lambda)$ is a ball of radius $R$ so that Theorem \ref{FIO decoupling theorem} follows by setting $R=\lambda$. 

By the trivial argument described in Remark \ref{trivial remark}, the inequality 
\begin{align*}
\|\mathcal{F}^{\lambda}f\|_{L^p(B_R)}  \lesssim R^{(n-1)/2p'} \big(\sum_{\nu \in \Theta_{R^{-1/2}}} \|\mathcal{F}^{\lambda}_\nu f \|_{L^{p}(B_R)}^p \Big)^{1/p}
\end{align*}
holds. This settles the desired decoupling inequality \eqref{decoupling inequality general R} for $R \sim 1$, and thereby establishes the base case for the induction.

Fix $1 \ll R \leq \lambda$ and assume the following induction hypothesis:

\begin{induction hypothesis} Assume \eqref{decoupling inequality general R} holds whenever $(R, \lambda)$ is replaced with $(R', \lambda')$ for any $1 \leq R' \leq R/2$ and $\lambda' \geq R'$.
\end{induction hypothesis}

In fact, one must work with a slightly more sophisticated induction hypothesis which involves not just a single operator $\mathcal{F}^{\lambda}$ but a whole class of related operators $\widetilde{\mathcal{F}}^{\lambda}$ which is closed under certain rescaling operations. The precise details are omitted here: see \cite{BHS} for further information. 





\subsection{Key ingredients of the proof} The proof of the inductive step comes in three stages:
\begin{enumerate}[1)]
\item At sufficiently small scales $1 \ll K \ll \lambda^{1/2}$, the operator $\mathcal{F}^{\lambda}$ may be effectively approximated by constant coefficient operators \eqref{wave propagators}.
\item For each of the approximating constant-coefficient operators, one may use the Bourgain--Demeter theorem at the small scale $K$.
\item The inherent symmetries of the inequality \eqref{decoupling inequality general R} allow one to propagate the gain arising from the constant-coefficient Bourgain--Demeter theorem at the small scale $K$ to larger scales. This is achieved via a parabolic rescaling argument, together with an application of the radial induction hypothesis.
\end{enumerate}
Further details of this simple programme are provided in the forthcoming subsections.




\subsection{Approximation by constant coefficient operators} Let $\mathcal{B}_K$ be a cover of $B_R$ by balls of radius $K$ for some value of $1 \ll K \ll \lambda^{1/2}$ to be determined later. Consider the spatially localised norm $\| \mathcal{F}^{\lambda} f \|_{L^p(B_K)}$ for $B_K = B(\bar z, K) \in \mathcal{B}_K$. By the uncertainty principle, localising to a spatial ball of radius $K$ should induce frequency uncertainty at the reciprocal scale $K^{-1}$. To understand what this means for our operator, we return once again to the prototypical case of the wave propagator. Observe that for any test function $\varphi \in C_c(\hat{\R}^{n+1})$ one has
\begin{equation}\label{distributional identity}
    \int_{\R^{n+1}} e^{it\sqrt{-\Delta}}f(x) \check{\varphi}(x,t)\,\ud x\ud t = \int_{\hat{\R}^n} \varphi(\xi, |\xi|) \hat{f}(\xi)\,\ud \xi 
\end{equation}
and therefore the space-time Fourier transform of $e^{it\sqrt{-\Delta}}f$ is distributionally supported on the light cone. For the general variable-coefficient case, the Fourier support properties of $\mathcal{F}^{\lambda}f$ involve a whole varying family of conic hypersurfaces $\Sigma_z \colon \xi \mapsto \partial_z\phi^{\lambda}(z;\xi)$, parametrised by $z \in \R^{n+1}$, and there is no clean distributional identity analogous to \eqref{distributional identity}. However, note that for $z \in B(\bar{z},K)$ one has
\begin{equation*}
    |\partial_z\phi^{\lambda}(z;\xi) - \partial_z \phi^{\lambda}(\bar{z};\xi)| \lesssim \frac{|z-\bar z|}{\lambda} \leq K^{-1}
\end{equation*}
provided $K \ll \lambda^{1/2}$, and so the uncertainty principle tells us that the surfaces $\Sigma_z$, and $\Sigma_{\bar{z}}$ should be essentially indistinguishable once the operator is spatially localised to $B_K$. It in fact follows that on $B_K$ the operator $\mathcal{F}^{\lambda}$ can be effectively approximated by a constant coefficient operator 
\begin{equation}\label{constant coefficient operator}
   T_{\bar{z}}g(z) := \int_{\hat{\R}^n}e^{i\langle \partial_z\phi^{\lambda}(\bar{z}; \xi),z\rangle}a(\xi)\hat{g}(\xi)\,\ud \xi
\end{equation}
associated to surface $\Sigma_{\bar{z}}$, where $a$ is a suitable choice of cut-off function. 

An alternative and slightly more accurate way to understand this approximation is to consider the first order Taylor expansion of the phase function 
\begin{equation*}
    \phi^{\lambda}(z; \xi) - \phi^{\lambda}(\bar{z};\xi)  = \langle \partial_z \phi^{\lambda}(\bar{z}; \xi), z - \bar{z} \rangle+ O(\lambda^{-1}|z - \bar{z}|^2).
\end{equation*}
Since $\lambda^{-1}|z - \bar{z}|^2 \ll 1$ for $z \in B_K$, the error term in the right-hand side does not contribute significantly to the oscillation induced by the phase. Consequently, over the ball $B_K$ one may safely remove this error and thereby replace the phase $\phi^{\lambda}$ by its linearisation $\phi^{\lambda}(\bar{z};\xi) + \langle \partial_z \phi^{\lambda}(\bar{z}; \xi), z - \bar{z} \rangle$. Observations of this kind lead to a statement of the form
\begin{equation*}
\| \mathcal{F}^{\lambda} f \|_{L^p(B_K)} \sim \| T_{\bar{z}}f _{\bar{z}}\|_{L^p(B(0,K))}
\end{equation*}
where $f_{\bar{z}}$ is defined by $\widehat{f}_{\bar{z}} := e^{i\phi^{\lambda}(\bar{z};\,\cdot\,)} \widehat{f}$ and $T_{\bar{z}}$ is as in \eqref{constant coefficient operator}. 

In practice, there are significant technical complications which arise in making these heuristics precise: the full details may be found in \cite{BHS}.

\begin{center}
\begin{figure}
\includegraphics[width=\textwidth]{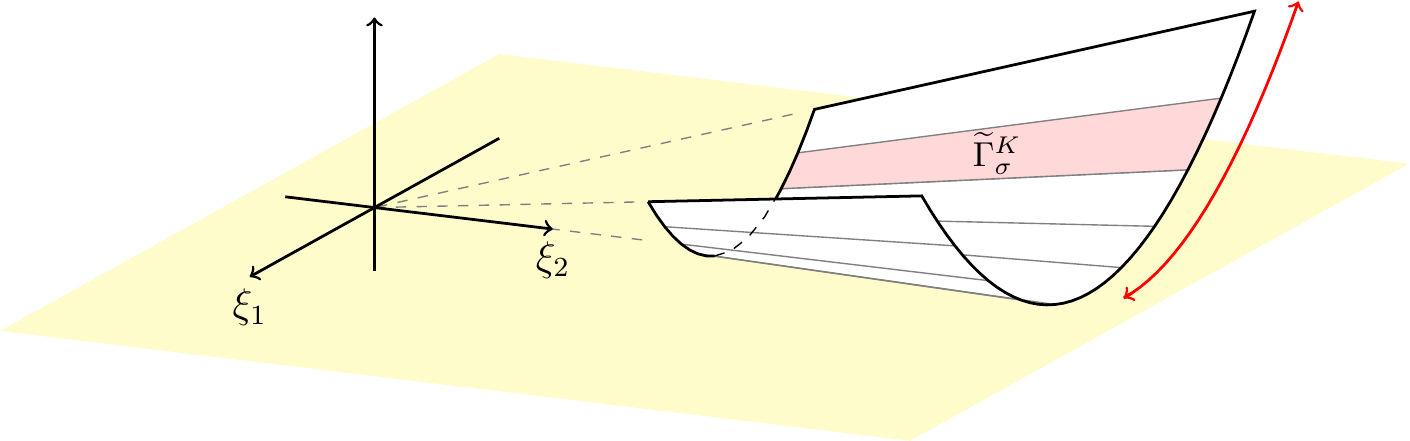}
\caption{The parabolic rescaling phenomenon for the phase $\phi(x,t;\xi)=x_1\xi_1+ x_2 \xi_2 + t \xi_1^2/\xi_2$. Here $\widetilde{\Gamma}_\sigma^K$ denotes the image of $\Gamma_\sigma^K$ under the map $\xi \mapsto(\xi, h_{\mathrm{par}}(\xi))$.}
\label{fig:parabolic rescaling}
\end{figure}
\end{center}




\subsection{Application of constant-coefficient decoupling}

The above approximation allows one to take advantage of the sharp $\ell^p$-decoupling theorem of Bourgain--Demeter for the constant coefficient operators $T_{\bar{z}}$ at scale $K$. In particular, on each $B_K = B(\bar z, K)$ one has
\begin{align*}
\| \mathcal{F}^{\lambda} f \|_{L^p(B_K)} & \sim \| T_{\bar{z}} f_{\bar z}\|_{L^p(B(0,K))} \\
&\lesssim_{\varepsilon} K^{\alpha(p) + \varepsilon/2} \big(\sum_{\sigma \in \Theta_{K^{-1/2}}}  \|T_{\bar{z}} f_{\bar z, \sigma} \|_{L^p(B(0,K))}^p \big)^{1/p} \\
& \sim K^{\alpha(p) + \varepsilon/2} \big(\sum_{\sigma \in \Theta_{K^{-1/2}}}  \| \mathcal{F}^{\lambda}_{\sigma} f \|_{L^p(B_K)}^p \big)^{1/p},
\end{align*}
where the first inequality is due to Theorem \eqref{thm:BourgainDemeter}. Summing over $B_K \subset B_R$, it follows that
\begin{equation}\label{approximation + BD}
\| \mathcal{F}^{\lambda} f \|_{L^p(B_R)}  \lesssim K^{\alpha(p) + \varepsilon/2} \big(\sum_{\sigma \in \Theta_{K^{-1/2}}} \| \mathcal{F}^{\lambda}_{\sigma} f \|_{L^p(B_R)}^p \big)^{1/p}.
\end{equation}
Thus, we have succeeded in decoupling $\mathcal{F}^{\lambda}f$ into scale $K^{-1/2}$ pieces, but we are still far from achieving the required decoupling at scale $R^{-1/2}$. 

At this point it is perhaps instructive to explain some of the ideas behind the proof, before fleshing out the details in the remaining subsections. The next step is to treat each of the summands on the right-hand side of \eqref{approximation + BD} individually. This is (essentially) done by repeating the above argument to successively pass down from decoupling at scale $K^{-1/2}$ to decoupling at scales $K^{-1}$, $K^{-3/2}$, $\dots$ until we reach the small scale $R^{-1/2}$. The key difficulty is to keep control of the constants in the inequalities which would otherwise build up over repeated application of the preceding arguments.\footnote{In the proof we will take $K \sim 1$: it is therefore necessary to iterate roughly $\log R$ times to pass all the way down to scale $R^{-1/2}$. If at each iteration we iterate we pick up an admissible constant $C$, then over all the iterations we pick up an inadmissible constant $C^{\log R} = R^{\log C}$.}

To control the constant build up, we assume a slightly different perspective. In particular, as in \cite{Bourgain2015}, we apply a parabolic rescaling in each stage of the iteration; this converts the improvement in the size of the decoupling regions to an improvement in the spatial localisation. In particular, \eqref{approximation + BD} can be thought of as passing from decoupling at scale 1 (the left-hand side) to decoupling at scale $K^{-1/2}$ (the right-hand side); after rescaling it roughly corresponds to passing from spatial localisation at scale $R$ to spatial localisation at scale $R/K$. The idea is then to iterate until we are spatially localised to $\sim 1$ scales, at which point the desired inequality becomes trivial. An advantage of the rescaling is that the repeatedly rescaled operators get closer and closer to constant coefficient operators over the course of the iterations. Thus, we find ourselves are in a more and more favourable situation as the argument progresses and this prevents a constant build up.

We shall see that the iteration argument sketched above can be succinctly expressed using our radial induction hypothesis.




\subsection{Parabolic rescaling}

By a parabolic rescaling argument, one can scale $\Gamma_\sigma^{K}$ to $\Gamma$, so that the support of $\widehat{f}_\sigma$ is at unit scale. This essentially reduces the spatial scale from $R$ to $R/K$ and anticipates an appeal to the radial induction hypothesis in \S\ref{induction hypothesis subsection}. 

\subsubsection*{A prototypical example} To illustrate the rescaling procedure, we consider the model operator $e^{ith_{\mathrm{par}}(D)}$ where
\begin{equation*}
    h_{\mathrm{par}}(\xi) := \frac{|\xi'|^2}{\xi_n} \qquad \textrm{for $\xi = (\xi',\xi_n) \in \hat{\R}^n$;}
\end{equation*}
this is a close cousin of the classical half-wave propagator $e^{it\sqrt{-\Delta}}$, but $e^{ith_{\mathrm{par}}(D)}$ enjoys some additional symmetries which make it slightly easier to analyse. 

Without loss of generality, one may interpret $\xi'$ as the angular variable; in particular, it is assumed that $\Gamma_{\sigma}^K$ is a sector of the form
\begin{equation*}
    \big\{(\xi',\xi_n) \in \hat{\R}^n : 1/2 \leq \xi_n \leq 2 \textrm{ and } |\xi'/\xi_n - \omega_{\sigma}| \leq K^{-1/2}\big\}
\end{equation*}
for some $\omega_{\sigma}\in B^{n-1}(0,1)$. The sector $\Gamma_{\sigma}^K$ is therefore mapped to $\Gamma$ under the transformation $(\Psi_{\sigma}^K)^{-1}$ where $\Psi_{\sigma}^K \colon (\xi',\xi_n) \mapsto (K^{-1/2}\xi' + \omega_\sigma \xi_n, \xi_n)$: see Figure~\ref{fig:parabolic rescaling}. 

 Let $\phi_{\mathrm{par}}$ be the phase associated to the operator $e^{ith_{\mathrm{par}}(D)}$. The scaling in the frequency variables can be transferred onto the spatial variables via the identity
 \begin{equation}\label{scaling identity}
     \phi_{\mathrm{par}}(x,t;\Psi_{\sigma}^K(\xi)) = \phi_{\mathrm{par}}(\Upsilon_{\sigma}^K(x,t); \xi) 
 \end{equation}
 where $\Upsilon_{\sigma}^K \colon (x,t) \mapsto (K^{-1/2}(x'+2t\omega_\sigma), \langle x', \omega_\sigma \rangle + x_n + t |\omega_\sigma|^2, K^{-1}t)$. Consequently, 
 \begin{equation*}
     \|e^{ith_{\mathrm{par}}(D)}f\|_{L^p(B_R)} = K^{(n+1)/2p} \|e^{ith_{\mathrm{par}}(D)}\tilde{f}_{\sigma}\|_{L^p(\Upsilon_{\sigma}^K(B_R))}
 \end{equation*}
 where $\tilde{f}_{\sigma}$ is defined by 
 \begin{equation}\label{rescaled function}
     [\tilde{f}_{\sigma}]\;\widehat{}\; := K^{-(n-1)/2}\widehat{f}_{\sigma}\circ \Psi_{\sigma}^K. 
 \end{equation}
  Observe that the set $\Upsilon_{\sigma}^K(B_R)$ is contained in an $R \times R/K^{1/2} \times \dots \times R/K^{1/2} \times R/K$ box: see Figure \ref{fig: spatial rescaling}. The longest side, which is of length $R$, points in the $(w_\sigma,1)$ direction whilst the shortest side, which of
length $R/K$, points in the time direction. The remaining sides, which are of length
$R/K^{1/2}$, point in spatial directions orthogonal to the long and short sides.

\begin{figure}
$(x,t)$ space

\begin{tikzpicture}[scale=1.6]

\begin{scope}[scale=1.3]

\filldraw[pattern=dots, pattern color=red!50] (0,0,1) -- (0,1,1)--(1,1,1)--(1,0,1)--(0,0,1);

\filldraw[pattern=dots, pattern color=red!50](0,1,1) -- (0,1,0) --(1,1,0) -- (1,1,1);

\filldraw[pattern=dots, pattern color=red!50] (1,1,0) -- (1,0,0) -- (1,0,1) -- (1,1,1);

\draw[<->] (0.08,0,1.2) -- (1.08,0,1.2);

\draw (0.58,0,1.2) node[below] {$R$};

\draw[<->] (-0.1,0,1) -- (-0.1,1,1);

\draw (-0.1,0.5,1) node[left] {$R$};

\draw[<->] (1.1,0,1+.1) -- (1.1,0,+0.1);

\draw (1.1,0,.65) node[right=0.1cm] {$R$};


\draw[->, thick, red] (1.5,0.5,0.5) to [bend left,looseness=0.8] node[above] {$\Upsilon_{\sigma}^{K}$} (3,0.5,0.5);


\filldraw[pattern=dots, pattern color=red!50]  (0+4,0+0.5,1) -- (0+4,1/6+0.5,1)--(1+4,1/6+0.5,1)--(1+4,0+0.5,1)--(0+4,0+0.5,1);

\filldraw[pattern=dots, pattern color=red!50] (0+4,1/6+0.5,1) -- (0+4,1/6+0.5,.5) --(1+4,1/6+0.5,0.5) -- (1+4,1/6+0.5,1);

\filldraw[pattern=dots, pattern color=red!50] (1+4,1/6+0.5,0.5) -- (1+4,0+0.5,0.5) -- (1+4,0+0.5,1);

\draw[<->] (0.08+4,0+0.5,1.2) -- (1.08+4,0+0.5,1.2);

\draw (0.58+4,0+0.5,1.2) node[below] {$R$};

\draw[<->] (-0.1+4,0+0.5,1) -- (-0.1+4,1/6+0.5,1);

\draw (-0.1+4,0.10+0.5,1) node[left] {$R/K$};

\draw[<->] (1.1+4,0+0.5,1+.1) -- (1.1+4,0+0.5,.5+0.1);

\draw (1.1+4,0+0.5,.9) node[right=0.1cm] {$R/K^{1/2}$};

\end{scope}

\end{tikzpicture}

\caption{The parabolic rescaling effect on the $(x,t)$-variables for the phase $\phi(x,t;\xi)=x_1 \xi_1 + x_2 \xi_2 + t \xi_1^2/\xi_2$.}
\label{fig: spatial rescaling}
\end{figure}
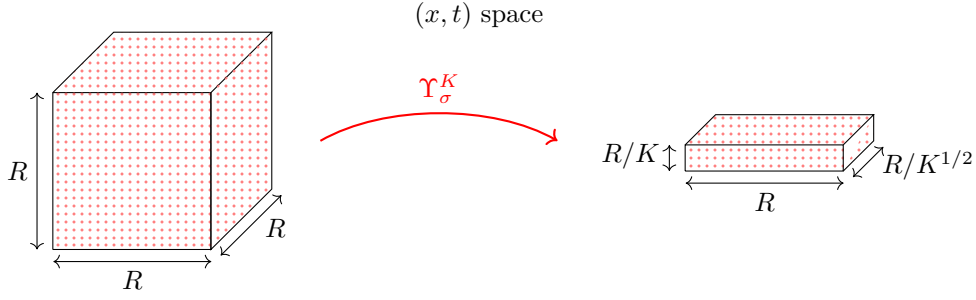
 
 \subsubsection*{The general case} The scaling procedure can be carried out for more general phases, albeit with notable additional complications. In particular, for each $\sigma$ one may identify changes of variable \begin{equation*}
     \Psi_\sigma^K \colon \Gamma_\sigma^K \to \Gamma \qquad \textrm{and} \qquad \Upsilon_{\sigma}^K \colon \R^{n+1} \to \R^{n+1}
 \end{equation*}
 and a suitable Fourier integral operator $\widetilde{\mathcal{F}}^{\lambda/K}_\sigma$ at scale $\lambda/K$ such that
\begin{equation*}
\| \mathcal{F}^{\lambda}_\sigma f \|_{L^p(B_R)} \sim K^{(n+1)/2p} \| \widetilde{\mathcal{F}}^{\lambda/K}_\sigma \tilde{f}_\sigma \|_{L^p(\Upsilon_{\sigma}^K(B_R))},
\end{equation*}
where $\tilde{f}_\sigma$ is defined as in \eqref{rescaled function} and $\Upsilon_{\sigma}^K(B_R)$ is contained in a rectangular box of dimensions $R \times R/K^{1/2} \times \cdots \times R/K^{1/2} \times R/K$. This situation is somewhat more involved than the prototypical case described above, due to the lack of any simple scaling identity \eqref{scaling identity}. In particular, the mapping $\Upsilon_{\sigma}^K$ will often be non-linear and the operators $\tilde{\mathcal{F}}^{\lambda}$ may not agree with the original $\mathcal{F}^{\lambda}$ (although they will of course be related). In order to deal with the latter point, it is necessary to formulate an induction hypothesis which applies to \textit{an entire class of FIOs} which is closed under the relevant rescalings. The (somewhat technically involved) details of the rigorous realisation of this strategy can be found in \cite{BHS}.




\subsection{Applying the induction hypothesis}\label{induction hypothesis subsection} Noting that $R':=R/K \leq R/2$, the (general) radial induction hypothesis implies that
\begin{equation*}
    \| \widetilde{\mathcal{F}}^{\lambda/K}_\sigma \tilde{f}_\sigma \|_{L^p(B_{R/K})} \\
\leq \bar{C}(p,\varepsilon) (R/K)^{\alpha(p) + \varepsilon} \big( \sum_{\substack{\nu \in \Theta_{(R/K)^{-1/2}}}}  \|\widetilde{\mathcal{F}}^{\lambda/K}_\sigma (\tilde{f}_\sigma)_\nu\|_{L^{p}(B_{R/K})}^p \big)^{1/p}
\end{equation*}
for any ball $B_{R/K}$ of radius $R/K$. Take a finitely-overlapping cover of $\Upsilon_{\sigma}^K(B_R)$ by such balls and apply the above inequality to each member of this cover. Taking $p$-powers, summing over each member of the cover and taking $p$-roots, one deduces that
\begin{equation*}
    \| \widetilde{\mathcal{F}}^{\lambda/K}_\sigma \tilde{f}_\sigma \|_{L^p(\Upsilon_{\sigma}^K(B_R))} \\
\lesssim \bar{C}(p,\varepsilon) (R/K)^{\alpha(p) + \varepsilon} \big( \sum_{\substack{\nu \in \Theta_{(R/K)^{-1/2}}}}  \|\widetilde{\mathcal{F}}^{\lambda/K}_\sigma (\tilde{f}_\sigma)_\nu\|_{L^{p}(\Upsilon_{\sigma}^K(B_R))}^p \big)^{1/p}.
\end{equation*}
Applying the rescaling argument to both sides of this inequality yields
\begin{equation*}
\|\mathcal{F}^{\lambda}_\sigma f\|_{L^p(B_R)}  \lesssim \bar{C}(p,\varepsilon) (R/K)^{\alpha(p) + \varepsilon} \big( \sum_{\substack{\nu \in \Theta_{R^{-1/2}} \\ \Gamma_\nu^R \subseteq \Gamma_\sigma^K}}  \|\mathcal{F}^{\lambda}_\sigma f_\nu\|_{L^{p}(B_R)}^p \big)^{1/p}
\end{equation*}
and one may sum this estimate over all $K^{-1/2}$-sectors $\Gamma_\sigma^K$ to obtain
\begin{equation}\label{parabolic scaling + induction}
\big( \sum_{\sigma \in \Theta_{K^{-1/2}}}\|\mathcal{F}^{\lambda}_\sigma f \|_{L^p(B_R)}^p \big)^{1/p} \lesssim \bar{C}(p,\varepsilon)(R/K)^{\alpha(p) + \varepsilon} \big( \sum_{\nu \in \Theta_{R^{-1/2}}}  \|\mathcal{F}^{\lambda}_\nu f\|_{L^{p}(B_R)}^p \big)^{1/p}.
\end{equation}
Finally, by combining \eqref{approximation + BD} with \eqref{parabolic scaling + induction}, it follows that
\begin{equation*}
\| \mathcal{F}^{\lambda} f \|_{L^p(B_R)} \lesssim_{\varepsilon} \bar{C}(p,\varepsilon) K^{-\varepsilon/2}
R^{\alpha(p) + \varepsilon} \big( \sum_{\nu \in \Theta_{R^{-1/2}}}  \|\mathcal{F}^{\lambda}_\nu f\|_{L^{p}(B_R)}^p \big)^{1/p}.
\end{equation*}
If $C_{\varepsilon}$ denotes the implicit constant appearing in the above inequality, then the induction can be closed simply by choosing $K$ large enough so that $C_\varepsilon K^{-\varepsilon/2} \leq 1$.




\appendix

\section{Historical background on the local smoothing conjecture}




\subsection{The euclidean wave equation}

The local smoothing conjecture for the euclidean half-wave propagator $e^{i t \sqrt{-\Delta}}$, that is Conjecture \ref{LS conj euclidean}, was formulated by the third author \cite{Sogge1991} in 1991. Moreover, he showed qualitative existence of the local smoothing phenomenon for $n=2$, showing that there is some $\varepsilon_0 > 0$ such that \eqref{LS conj euclidean equation} holds for $0 < \sigma < \varepsilon_0$ if $p=4$. Note that by interpolation with $L^2$ and $L^\infty$, this also shows that there is $\varepsilon(p)>0$ such that \eqref{LS conj euclidean equation} holds for all $0 < \sigma < \varepsilon(p)$ if $2 < p < \infty$. Shortly after, Mockenhoupt, Seeger and the third author \cite{Mockenhaupt1992} obtained a quantitative estimate at the critical Lebesgue exponent $p=4$ through a square function estimate approach. Further refinements at $p=4$ were later obtained by Bourgain \cite{BourgainSF} and Tao and Vargas \cite{TV2}. In particular, the work of Tao and Vargas established a way to transfer bilinear Fourier restriction estimates into estimates for the square function; thus, the best results via their method are obtained through the sharp bilinear restriction estimates for the cone by Wolff \cite{Wolff2001} (see also the endpoint result of Tao \cite{Tao2001}). In higher dimensions, Mockenhoupt, Seeger and the third author \cite{Mockenhaupt1993} also established existence of local smoothing estimates, although in this case their results are concerned with estimates at $p=\frac{2(n+1)}{n-1}$ rather than at the critical Lebesgue exponent $p=\frac{2n}{n-1}$.

All the initial results discussed in the previous paragraph did not imply sharp estimates in terms of the regularity exponent $\sigma$ for any $2 < p < \infty$. A striking advance was made by Wolff \cite{Wolff2000} in 2000 when he introduced the decoupling inequalities discussed in $\S$\ref{sec:Wolff} and established that in the plane $1/p-$ local smoothing holds for all $p > 74$. His result was later extended to higher dimensions by \L aba and Wolff \cite{Laba2002}. Subsequent works of Garrig\'os and Seeger \cite{Garrigos2009} and Garrig\'os, Seeger and Schlag \cite{Garrigos2010} improved the Lebesgue exponent $p$ in the sharp\footnote{Here the word \textit{sharp} refers to the sharp dependence of the constant in terms of the number of pieces featuring in the decoupling inequality; more precisely, the optimal dependence on $\lambda$ in \eqref{FIO decoupling estimate}.} decoupling inequalities, and therefore the Lebesgue exponent for which Conjecture \ref{LS conj euclidean} holds. The sharp decoupling inequalities were finally established by Bourgain and Demeter \cite{Bourgain2015} in 2015, which imply $1/p-$ local smoothing estimates for all $ \frac{2(n+1)}{n-1} \leq p < \infty$.

In parallel progress obtained via decoupling inequalities, Heo, Nazarov and Seeger \cite{Heo2011} introduced in 2011 a different approach to the problem, which in particular yields local smoothing estimates at the endpoint regularity $\sigma=1/p$ if $\frac{2(n-1)}{n-3} < p < \infty$ for $n \geq 4$. 

Finally, some further progress has been obtained for $n=2$. In 2012, S. Lee and Vargas \cite{Lee2012} proved local smoothing estimates for all $\sigma < \bar{s}_p$ if $p=3$ via a sharp square function estimate in $L^3(\R^2)$. This is the first and only time sharp local smoothing estimates (up to the regularity endpoint) have been obtained in the range $2 < p < \frac{2n}{n-1}$. More recently, J. Lee \cite{Lee} has further improved the square function estimate at $p=4$ using the $L^6(\R^2)$ decoupling inequalities of Bourgain--Demeter \cite{Bourgain2015}, showing that \eqref{LS conj euclidean equation} holds for all $ \sigma < 3/16 $ when $p=4$.

The precise numerology and historical progress on the euclidean local smoothing conjecture have been outlined\footnote{For simplicity, the intermediate progress of Bourgain \textcolor{magenta}{\cite{BourgainSF}} and Tao and Vargas \textcolor{magenta}{\cite{Tao1999}} at $p=4$ has not been sketched in Figure \ref{fig:LS n=2}; see Table \ref{table: LS euclidean n=2} for their contribution to the problem.} in Figure \ref{fig:LS n=2} and Table \ref{table: LS euclidean n=2} for $n=2$ and Figure \ref{fig: LS euclidean high n} and Table \ref{table: LS euclidean high n} for $n \geq 3$.




\subsection{Fourier integral operators}

Shortly after the formulation of the local smoothing conjecture for the euclidean wave equation, Mockenhoupt, Seeger and the third author \cite{Mockenhaupt1993} considered the analogous problem for wave equations on compact manifolds and general classes of Fourier integral operators. They established positive partial results at the critical Lebesgue exponent $p=4$ for $n = 2$, and at the subcritical exponent $p=\frac{2(n+1)}{n-1}$ for $n \geq 3$. In 1997, Minicozzi and the third author \cite{Minicozzi1997} provided examples of compact Riemannian manifolds $(M,g)$ for which the operator $e^{it\sqrt{-\Delta_g}}$ does not demonstrate $1/p-$ local smoothing for $p < \bar{p}_{n, +}$ (see also \cite{BHS}). This revealed a difference in the local smoothing phenomenon between the euclidean and variable-coefficient cases.

The next positive results were obtained by Lee and Seeger in \cite{Lee2013}, where they extended the endpoint regularity results in \cite{Heo2011} to general Fourier integral operators; as in the euclidean case, these results only hold for $n \geq 4$. Except for the question of endpoint regularity, the best known results have recently been obtained by the authors in \cite{BHS}, extending to the variable coefficient case the results of Bourgain and Demeter \cite{Bourgain2015}. Moreover, and as discussed in $\S$\ref{sec:LS}, the authors also showed that their results are best possible in odd dimensions in the general context of Conjecture \ref{LS conj FIO}, although one expects to go beyond these exponents in the even dimensional case and in the case of solutions arising from wave equations on compact manifolds.

The precise numerology and historical progress on Conjectures \ref{LS conj manifold} and \ref{LS conj FIO} have been outlined in Figure \ref{fig: LS FIO} and Table \ref{table: LS FIO}.

\clearpage




\subsection{Figure and Table for the euclidean wave equation for $n=2$}


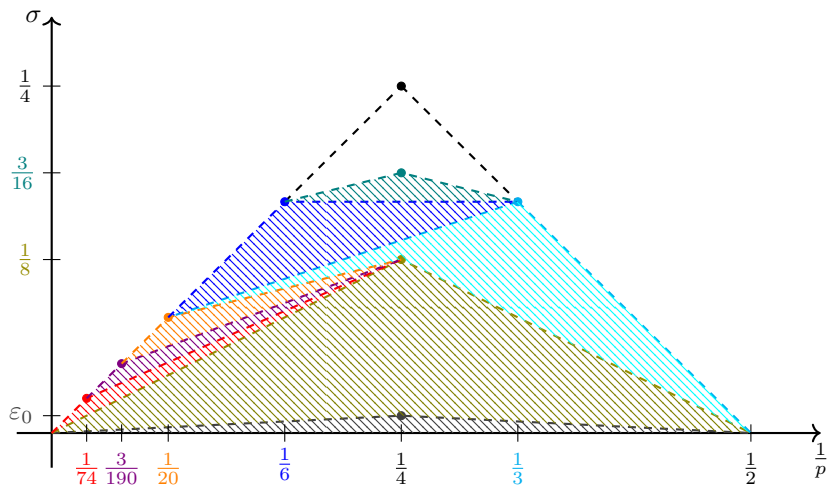
\begin{figure}[b]
\begin{tikzpicture}[scale=2.3] 

\begin{scope}[scale=2]
\draw[thick,->] (-.1,0) -- (2.2,0) node[below] {$ \frac 1 p$};
\draw[thick,->] (0,-.1) -- (0,1.2) node[left] {$ \sigma$};

\draw (.025,1) -- (-.025,1) node[left] {$ \frac{1}{4}$};
\draw (1,-0.025) -- (1,.025) node[below = 0.25cm] {$ \frac{1}{4}$};

\draw (2,-0.025) -- (2,.025) node[below= 0.25cm] {$ \frac{1}{2}$}; 
\node[circle,draw=black, fill=black, inner sep=0pt,minimum size=3pt] (b) at (1,1) {};

\draw[thick, dashed]  (2/3,2/3)  -- (1,1); 

\draw[thick, dashed] (1,1) -- (4/3,2/3);


\draw (.025,1/20) -- (-.025,1/20) node[left] { \textcolor{darkgray}{$ \varepsilon_0$}};
\node[circle,draw=darkgray, fill=darkgray, inner sep=0pt,minimum size=3pt] (b) at (1,1/20) {};

\fill[pattern=north west lines, pattern color=darkgray] (1,1/20) -- (2,0) -- (0,0) -- (1,1/20);

\draw[thick, dashed, color=darkgray] (0,0) -- (1,1/20) -- (2,0) ;


\draw (.025,1/2) -- (-.025,1/2) node[left] { \textcolor{olive}{$ \frac{1}{8}$}};
\node[circle,draw=olive, fill=olive, inner sep=0pt,minimum size=3pt] (b) at (1,1/2) {};

\fill[pattern=north west lines, pattern color=olive] (1,1/2) -- (2,0) -- (1,1/20) -- (0,0) -- (1,1/2);

\draw[thick, dashed, color=olive] (0,0) -- (1,1/2) -- (2,0) ;


\draw (1/10,-0.025) -- (1/10,.025) node[below = 0.25cm] {\textcolor{red}{$ \frac{1}{74}$}};
\node[circle,draw=red, fill=red, inner sep=0pt,minimum size=3pt] (b) at (1/10,1/10) {};

\fill[pattern=north west lines, pattern color=red] (1/10,1/10) -- (1,1/2) -- (0,0) -- (1/10,1/10);

\draw[thick, dashed, color=red] (0,0) -- (1/10,1/10) -- (1,1/2) ;


\draw (1/5,-0.025) -- (1/5,.025) node[below = 0.25cm] {\textcolor{violet}{$ \frac{3}{190}$}};
\node[circle,draw=violet, fill=violet, inner sep=0pt,minimum size=3pt] (b) at (1/5,1/5) {};

\fill[pattern=north west lines, pattern color=violet] (1/5,1/5) -- (1,1/2) -- (1/10,1/10) -- (1/5,1/5);

\draw[thick, dashed, color=violet] (1/10,1/10) -- (1/5,1/5) -- (1,1/2) ;


\draw (1/3,-0.025) -- (1/3,.025) node[below = 0.25cm] {\textcolor{orange}{$ \frac{1}{20}$}};
\node[circle,draw=orange, fill=orange, inner sep=0pt,minimum size=3pt] (b) at (1/3,1/3) {};

\fill[pattern=north west lines, pattern color=orange] (1/3,1/3) -- (1,1/2) -- (1/5,1/5) -- (1/3,1/3);

\draw[thick, dashed, color=orange] (1/5,1/5) -- (1/3,1/3) -- (1,1/2) ;


\draw (4/3, -0.025) -- (4/3, 0.025) node[below=0.25cm] {\textcolor{cyan}{$\frac{1}{3}$}}; 

\node[circle,draw=cyan, fill=cyan, inner sep=0pt,minimum size=3pt] (b) at (4/3,2/3) {};

\draw[thick, dashed, color=cyan] (4/3,4/6) -- (2,0) ;

\draw[thick, dashed, color=cyan] (4/3,4/6) -- (1/3,1/3) ;

\fill[pattern=north west lines, pattern color=cyan] (4/3,2/3) -- (2,0) -- (1,1/2) -- (4/3,2/3);

\fill[pattern=north west lines, pattern color=cyan] (4/3,2/3) -- (1/3,1/3) -- (1,1/2) -- (4/3,2/3);


\draw (2/3,-0.025) -- (2/3,.025);
\draw (2/3, 0) node[below = 0.08cm] {\textcolor{blue}{$ \frac{1}{6}$} };

\node[circle,draw=blue, fill=blue, inner sep=0pt,minimum size=3pt] (b) at (2/3,2/3) {};

\draw[thick, dashed, color=blue] (1/3,1/3) -- (2/3,2/3)  -- (4/3, 2/3) ;

\fill[pattern=north west lines, pattern color=blue] (1/3,1/3) -- (2/3,2/3) -- (4/3,2/3) -- (1/3,1/3);


\draw (-0.025,3/4) -- (.025,3/4);
\draw (0,3/4) node[left = 0.08cm] {\textcolor{teal}{$ \frac{3}{16}$} };

\node[circle,draw=teal, fill=teal, inner sep=0pt,minimum size=3pt] (b) at (1,3/4) {};

\draw[thick, dashed, color=teal] (2/3,2/3) -- (1,3/4)  -- (4/3, 2/3) ;

\fill[pattern=north west lines, pattern color=teal] (2/3,2/3) -- (1, 3/4) -- (4/3,2/3) -- (2/3,2/3);

\end{scope}

\end{tikzpicture}

\caption{Chronological progress on Conjecture \ref{LS conj euclidean equation} for $n=2$. Each new result can be interpolated against the $L^2$ and $L^\infty$ estimates and the previous results in order to yield a new region in the conjectured triangle. The current best results follow from interpolating \textcolor{cyan}{\cite{Lee2012}}, \textcolor{blue}{\cite{Bourgain2015}} and \textcolor{teal}{\cite{Lee}} and the $L^2$ and $L^\infty$ estimates. The white region remains open.}
\label{fig:LS n=2}

\end{figure}


\begin{table}[b]
    \centering \makegapedcells
    \begin{tabular}{|c|c|c|}
    \hline
     & $p$ & $\sigma$  \\ \hline
    \textcolor{darkgray}{S \cite{Sogge}} & $4$ & $\varepsilon_0$ \\ \hline
    {\textcolor{olive}{Mockenhoupt--Seeger--S \cite{Mockenhaupt1992}}} & $4$ & $1/8$ \\ \hline
    {\textcolor{magenta}{Bourgain \cite{BourgainSF}}} & $4$ & $1/8+\varepsilon_0$\\ \hline
    {\textcolor{magenta}{Tao--Vargas \cite{TV2} + Wolff \cite{Wolff2001} }} & $4$ & $1/8+1/88-$ \\ \hline
    {\textcolor{red}{Wolff \cite{Wolff2000}}} & $74+$ & $1/p -$ \\ \hline
    {\textcolor{violet}{Garrig\'os--Seeger \cite{Garrigos2009} }} & $190/3+$ & $1/p-$ \\ \hline
    {\textcolor{orange}{Garrig\'os--Seeger--Schlag \cite{Garrigos2009} }} & $20+$ & $1/p-$ \\ \hline
    {\textcolor{cyan}{S. Lee--Vargas \cite{Lee2012}}} & $3$ & $1/6-$ \\ \hline
    {\textcolor{blue}{Bourgain--Demeter \cite{Bourgain2015}}} & $6$ & $1/6-$ \\ \hline
    {\textcolor{teal}{J. Lee \cite{Lee}}} & $4$ & $3/16-$ \\
    \hline 
    \end{tabular}

\vspace{10pt}

    \caption{Chronological progress on Conjecture \ref{LS conj euclidean} for $n=2$. The notation $p_0+$ means that the estimate \eqref{LS conj euclidean equation} holds for all $p>p_0$, whilst the notation $\sigma_0-$ means that the estimate holds for all $\sigma < \sigma_0$. Otherwise, the equalities for the Lebesgue and regularity exponents are admissible. In the table $\varepsilon_0 > 0$ is a small, unspecified constant. The method of J. Lee can be applied away from the $p = 4$ exponent to give improved estimates in a slightly larger convex region than that given by interpolation; this was pointed out to us by Pavel Zorin--Kranich.}
    \label{table: LS euclidean n=2}
    
    \vspace{1cm}
\end{table}




\newpage

\subsection{Figure and Table for the euclidean wave equation for $n\geq 3$}


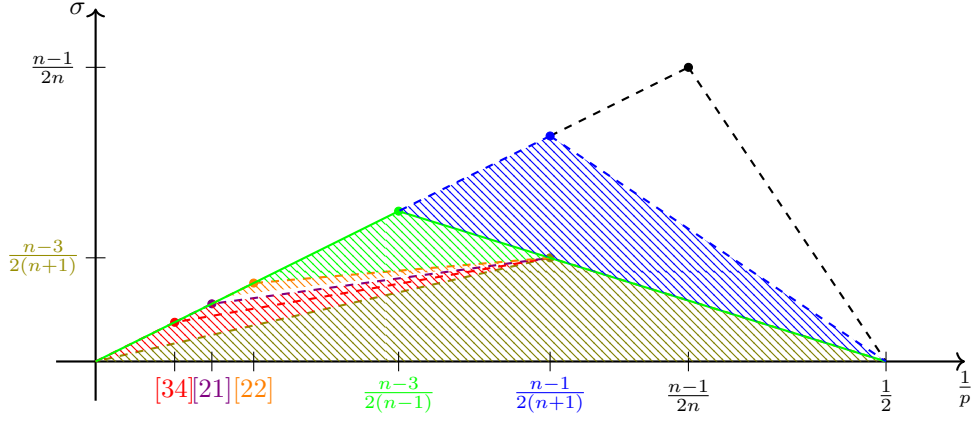
\begin{figure}[b]
\begin{tikzpicture}[scale=2.6] 

\begin{scope}[scale=2]
\draw[thick,->] (-.1,0) -- (2.2,0) node[below] {$ \frac 1 p$};
\draw[thick,->] (0,-.1) -- (0,0.9) node[left] {$ \sigma$};

\draw (.025,3/4) -- (-.025,3/4) node[left] {$ \frac{n-1}{2n}$};
\draw (3/2,-0.025) -- (3/2,.025) node[below = 0.25cm] {$ \frac{n-1}{2n}$};

\draw (2,-0.025) -- (2,.025) node[below= 0.25cm] {$ \frac{1}{2}$}; 
\node[circle,draw=black, fill=black, inner sep=0pt,minimum size=3pt] (b) at (3/2,3/4) {};

\draw[thick, dashed]  (2.3/2,2.3/4)  -- (3/2,3/4); 

\draw[thick, dashed] (3/2,3/4) -- (2,0);


\draw (-0.025, 2.3/8.7) -- (.025, 2.3/8.7);
\draw ( 0, 2.3/8.7) node[left = 0.08cm] {\textcolor{olive}{$ \frac{n-3}{2(n+1)}$} };

\node[circle,draw=olive, fill=olive, inner sep=0pt,minimum size=3pt] (b) at (2.3/2,2.3/8.7) {};

\draw[thick, dashed, color=olive] (2,0) -- (2.3/2,2.3/8.7) -- (0,0) ;

\fill[pattern=north west lines, pattern color=olive] (2.3/2,2.3/8.7) -- (2,0) -- (0,0) -- (2.3/2,2.3/8.7);


\draw (1/5,-0.025) -- (1/5,.025);
\draw (1/5, 0) node[below = 0.08cm] {\textcolor{red}{\cite{Laba2002}} };

\node[circle,draw=red, fill=red, inner sep=0pt,minimum size=3pt] (b) at (1/5,1/10) {};

\draw[thick, dashed, color=red] (0,0) -- (1/5,1/10) -- (2.3/2,2.3/8.7);

\fill[pattern=north west lines, pattern color=red] (0,0) -- (1/5,1/10) -- (2.3/2,2.3/8.7) -- (0,0);


\draw (1/3.4,-0.025) -- (1/3.4,.025);
\draw (1/3.4, 0) node[below = 0.08cm] {\textcolor{violet}{\cite{Garrigos2009}} };

\node[circle,draw=violet, fill=violet, inner sep=0pt,minimum size=3pt] (b) at (1/3.4,1/6.8) {};

\draw[thick, dashed, color=violet] (1/5,1/10) -- (1/3.4,1/6.8) -- (2.3/2,2.3/8.7);

\fill[pattern=north west lines, pattern color=red] (1/5,1/10) -- (1/3.4,1/6.8) -- (2.3/2,2.3/8.7) -- (1/5,1/10);


\draw (1/2.5,-0.025) -- (1/2.5,.025);
\draw (1/2.5, 0) node[below = 0.08cm] {\textcolor{orange}{\cite{Garrigos2010}} };

\node[circle,draw=orange, fill=orange, inner sep=0pt,minimum size=3pt] (b) at (1/2.5,1/5) {};

\draw[thick, dashed, color=orange] (1/3.4,1/6.8) -- (1/2.5,1/5) -- (2.3/2,2.3/8.7);

\fill[pattern=north west lines, pattern color=orange] (1/3.4,1/6.8) -- (1/2.5,1/5) -- (2.3/2,2.3/8.7) -- (1/3,1/6);


\draw (2.3/3,-0.025) -- (2.3/3,.025);
\draw (2.3/3, 0) node[below = 0.08cm] {\textcolor{green}{$ \frac{n-3}{2(n-1)}$} };

\draw[thick, color=green] (2.3/3,2.3/6) -- (0,0) ;

\node[circle,draw=green, fill=green, inner sep=0pt,minimum size=3pt] (b) at (2.3/3,2.3/6) {};

\fill[pattern=north west lines, pattern color=green] (1/2.5,1/5) -- (2.3/3,2.3/6) -- (2.3/2,2.3/8.7) -- (1/2.5,1/5);

\draw[thick, color=green] (2.3/3,2.3/6) -- (2,0) ;


\draw (2.3/2,-0.025) -- (2.3/2,.025);
\draw (2.3/2, 0) node[below = 0.08cm] {\textcolor{blue}{$ \frac{n-1}{2(n+1)}$} };

\fill[pattern=north west lines, pattern color=blue] (2.3/2,2.3/4) -- (2,0) -- (2.3/3,2.3/6) -- (2.3/2,2.3/4);

\draw[thick, dashed, color=blue] (2.3/2,2.3/4) -- (2,0) ;
\draw[thick, dashed, color=blue] (2.3/2,2.3/4) -- (2.3/3,2.3/6)  ;

\node[circle,draw=blue, fill=blue, inner sep=0pt,minimum size=3pt] (b) at (2.3/2,2.3/4) {};

\end{scope}

\end{tikzpicture}

\caption{Chronological progress on Conjecture \ref{LS conj euclidean} for high dimensions ($n \geq 5$). Each new result can be interpolated against the $L^2$ and $L^\infty$ estimates and the previous results in order to yield a new region in the conjectured triangle. The best known results follow from interpolation between \textcolor{blue}{\cite{Bourgain2015}} and the $L^2$ and $L^\infty$ estimates, together with the strengthened results \textcolor{green}{\cite{Heo2011}} at the regularity endpoint $\sigma=1/p$ if $p > \frac{n-3}{2(n-1)}$. The white region remains open.}
\label{fig: LS euclidean high n}
\vspace{1cm}
\end{figure}


\begin{table}[b]
    \centering \makegapedcells
    \begin{tabular}{|l|c|c|c|c|c|c|}
    \hline
     & \multicolumn{2}{|c|}{$n=3$} & \multicolumn{2}{|c|}{$n = 4$} & \multicolumn{2}{|c|}{$n \geq 5$} \\ \hline
     & $p$ & $\sigma$ & $p$ & $\sigma$ & $p$ & $\sigma$ \\ \hline
    {\textcolor{olive}{Mockenhoupt--Seeger--S \cite{Mockenhaupt1992}}} & $4$ & $1/2p-$ & $10/3$ & $1/3p$ & $\frac{2(n+1)}{n-1}$ & $\frac{n-3}{n-1}\frac{1}{p}$ \\ \hline
    {\textcolor{red}{\L aba--Wolff \cite{Laba2002}}} & $18+$ & $1/p-$  & $8.4+$ & $1/p-$ & $ \frac{2(n+1)}{n-3}+$ & $1/p-$  \\ \hline
    {\textcolor{violet}{Garrig\'os--Seeger \cite{Garrigos2009}}} & $15+$ & $1/p-$ & $7.28+$ & $1/p-$ & $ \frac{2(n-1)(n+3)}{(n+1)(n-3)}+$ & $1/p-$ \\ \hline
    {\textcolor{orange}{Garrig\'os--Seeger--Schlag \cite{Garrigos2010}}} & $9+$ & $1/p-$ & $5.6+$ & $1/p-$ &  $ \frac{2n(n+3)}{(n-1)(n-2)}+$ & $1/p-$ \\ \hline
    {\textcolor{green}{Heo--Nazarov--Seeger \cite{Heo2011}}} &  &  & $6$ & $1/6$ & $ \frac{2(n-1)}{n-3}+$ & $1/p$  \\ \hline
    {\textcolor{blue}{Bourgain--Demeter \cite{Bourgain2015}}} & $4$ & $1/4-$ & $10/3$ & $3/10-$ & $\frac{2(n+1)}{n-1}$ & $1/p-$  \\ \hline
\end{tabular}

\vspace{10pt}

    \caption{Chronological progress on Conjecture \ref{LS conj euclidean} for $n \geq 3$. The notation $+$ and $-$ is used in a similar fashion to Table \ref{table: LS euclidean n=2}.}
    \label{table: LS euclidean high n}
    \vspace{3.5cm}
\end{table}


%


\newpage

\subsection{Figure and Table for Fourier integrals}


\begin{figure}[b]
\begin{tikzpicture}[scale=2.6] 

\begin{scope}[scale=2]
\draw[thick,->] (-.1,0) -- (2.2,0) node[below] {$ \frac 1 p$};
\draw[thick,->] (0,-.1) -- (0,0.9) node[left] {$ \sigma$};

\draw (.025,3/4) -- (-.025,3/4) node[left] {$ \frac{n-1}{2n}$};
\draw (3/2,-0.025) -- (3/2,.025) node[below = 0.25cm] {$ \frac{n-1}{2n}$};

\draw (2,-0.025) -- (2,.025) node[below= 0.25cm] {$ \frac{1}{2}$}; 
\node[circle,draw=black, fill=black, inner sep=0pt,minimum size=3pt] (b) at (3/2,3/4) {};

\draw[thick, dashed]  (2.3/2,2.3/4)  -- (3/2,3/4); 

\draw[thick, dashed] (3/2,3/4) -- (2,0);


\draw (-0.025, 2.3/8.7) -- (.025, 2.3/8.7);
\draw ( 0, 2.3/8.7) node[left = 0.08cm] {\textcolor{olive}{$ \frac{n-3}{2(n+1)}$} };

\node[circle,draw=olive, fill=olive, inner sep=0pt,minimum size=3pt] (b) at (2.3/2,2.3/8.7) {};

\draw[thick, dashed, color=olive] (2,0) -- (2.3/2,2.3/8.7) -- (0,0) ;

\fill[pattern=north west lines, pattern color=olive] (2.3/2,2.3/8.7) -- (2,0) -- (0,0) -- (2.3/2,2.3/8.7);


\draw (2.3/3,-0.025) -- (2.3/3,.025);
\draw (2.3/3, 0) node[below = 0.08cm] {\textcolor{green}{$ \frac{n-3}{2(n-1)}$} };

\draw[thick, color=green] (2.3/3,2.3/6) -- (0,0) ;

\node[circle,draw=green, fill=green, inner sep=0pt,minimum size=3pt] (b) at (2.3/3,2.3/6) {};

\fill[pattern=north west lines, pattern color=green] (0,0) -- (2.3/3,2.3/6) -- (2.3/2,2.3/8.7) -- (0,0);

\draw[thick, color=green] (2.3/3,2.3/6) -- (2,0) ;


\draw (2.3/2,-0.025) -- (2.3/2,.025);
\draw (2.3/2, 0) node[below = 0.08cm] {\textcolor{blue}{$ \frac{n-1}{2(n+1)}$} };

\fill[pattern=north west lines, pattern color=blue] (2.3/2,2.3/4) -- (2,0) -- (2.3/3,2.3/6) -- (2.3/2,2.3/4);

\draw[thick, dashed, color=blue] (2.3/2,2.3/4) -- (2,0) ;
\draw[thick, dashed, color=blue] (2.3/2,2.3/4) -- (2.3/3,2.3/6)  ;

\node[circle,draw=blue, fill=blue, inner sep=0pt,minimum size=3pt] (b) at (2.3/2,2.3/4) {};


\draw[thick, dashed, color=red] (2.63/2,2.63/4) -- (2,0) ;

\fill[pattern=north west lines, pattern color=red] (2.63/2,2.63/4) -- (2,0) -- (3/2,3/4) -- (2.63/2,2.63/4);

\draw (2.63/2, -0.025) -- (2.63/2, 0.025) node[below=0.25cm] {\textcolor{red}{$\frac{1}{\bar{p}_{n, +}}$}}; 

\node[circle,draw=red, fill=red, inner sep=0pt,minimum size=3pt] (b) at (2.63/2,2.63/4) {};
\draw (2.63/2, .025) -- (2.63/2,-.025) ;

\end{scope}

\end{tikzpicture}

\caption{Chronological progress on Conjecture \ref{LS conj manifold} in high dimensions $(n \geq 4)$. The \textcolor{red}{red} region is inadmissible. The best known results follow from interpolation between \textcolor{blue}{\cite{BHS}} and the $L^2$ and $L^\infty$ estimates, together with the strengthened results \textcolor{green}{\cite{Lee2013}} at the regularity endpoint $\sigma=1/p$ if $p > \frac{n-3}{2(n-1)}$. The white region remains open. In the case of Conjecture \ref{LS conj FIO} the red region extends to $p=\bar{p}_n$ and there is no white open region in odd dimensions.}
\label{fig: LS FIO}
\vspace{1cm}
\end{figure}
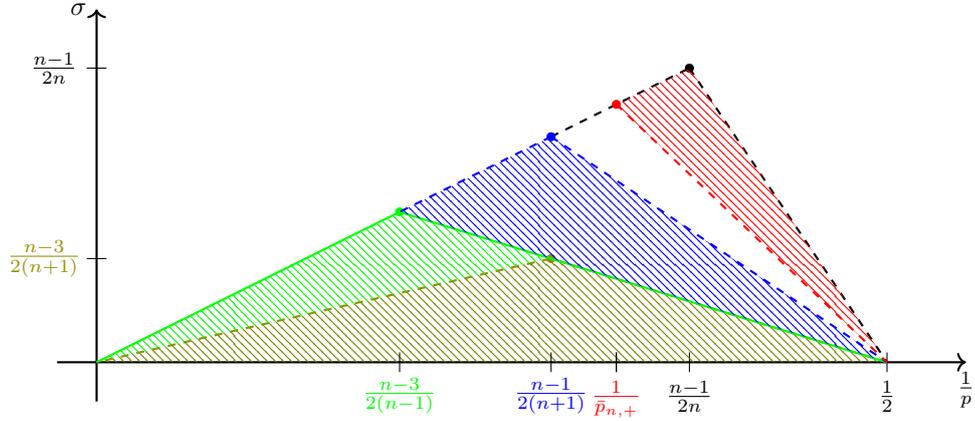


\begin{table}[b]
    \centering \makegapedcells
    \begin{tabular}{|l|c|c|c|c|c|c|}
    \hline
     & \multicolumn{2}{|c|}{$n=2$}& \multicolumn{2}{|c|}{$n=3$} & \multicolumn{2}{|c|}{$n \geq 4$}  \\ \hline
     & $p$ & $\sigma$ & $p$ & $\sigma$ & $p$ & $\sigma$  \\ \hline
    {\textcolor{olive}{Mockenhoupt--Seeger--S \cite{Mockenhaupt1992}}} & $4$ & $1/2p-$ & $4$ & $1/2p-$ &  $\frac{2(n+1)}{n-1}$ & $\frac{n-3}{n-1}\frac{1}{p}$ \\ \hline
    {\textcolor{green}{Lee--Seeger \cite{Lee2013}}} & & &  &  &  $ \frac{2(n-1)}{n-3} +$ & $1/p$  \\ \hline
    {\textcolor{blue}{B--H--S \cite{BHS}}} & $6$ & $1/6-$ & $4$ & $1/4-$ &  $\frac{2(n+1)}{n-1}$ & $1/p-$  \\ \hline
\end{tabular}

\vspace{10pt}

    \caption{Chronological progress on Conjectures \ref{LS conj manifold} and \ref{LS conj FIO} for $n \geq 2$. The notation $+$ and $-$ is used in a similar fashion to Table \ref{table: LS euclidean n=2}.}
    \label{table: LS FIO}
    \vspace{5.5cm}
\end{table}

\newpage




\bibliography{Reference}
\bibliographystyle{amsplain}

\end{document}